\definecolor{light-gray}{gray}{0.95}
\newtheorem{theorem}{Theorem}[section]
\newtheorem{lemma}[theorem]{Lemma}
\newtheorem{proposition}[theorem]{Proposition}
\newtheorem{corollary}[theorem]{Corollary}
\newtheorem{remark}[theorem]{Remark}
\newtheorem{definition}{Definition}[section]
\newcommand{\bola}{\textrm{\Large\textperiodcentered}}
\numberwithin{equation}{section}
\newcommand{\mc}[1]{{\mathcal #1}}
\newcommand{\mb}[1]{{\mathbf #1}}
\newcommand{\bb}[1]{{\mathbb #1}}
\renewcommand{\epsilon}{\varepsilon}
\newcommand{\vphi}{\varphi}
\newcommand{\p}{\partial}
\def\centerarc[#1](#2)(#3:#4:#5){\draw[#1] ($(#2)+({#5*cos(#3)},{#5*sin(#3)})$) arc (#3:#4:#5);}
\renewcommand\vec[1]{\overrightarrow{#1}}
\newcommand\vecleft[1]{\overleftarrow{#1}}
\begin{document}

\title[Crossover to the Stochastic Burgers  equation for the WASEP with a slow bond]{Crossover to the Stochastic Burgers  equation\\ for the WASEP with a slow bond}

\author{Tertuliano Franco}
\address{UFBA\\
 Instituto de Matem\'atica, Campus de Ondina, Av. Adhemar de Barros, S/N. CEP 40170-110\\
Salvador, Brazil}
\curraddr{}
\email{tertu@ufba.br}
\thanks{}

%    author two information
\author{Patr\'{\i}cia Gon\c{c}alves}
\address{\noindent Departamento de Matem\'atica, PUC-RIO, Rua Marqu\^es de S\~ao Vicente, no. 225, 22453-900, Rio de Janeiro, Rj-Brazil and CMAT, Centro de Matem\'atica da Universidade do Minho, Campus de Gualtar, 4710-057 Braga, Portugal.}
\email{patricia@mat.puc-rio.br}

%    author two information
\author{Marielle Simon}
\address{\noindent \'Equipe MEPHYSTO, Inria Lille -- Nord Europe, 40 avenue du Halley, 59650 Villeneuve d'Ascq (France)}
\email{marielle.simon@inria.fr}

\subjclass[2010]{60K35}

\begin{abstract}
We consider the weakly asymmetric simple exclusion process in the presence of a slow bond and starting from the invariant state, namely the Bernoulli product measure of parameter $\rho\in(0,1)$. The rate of passage of particles to the right (resp. left) is $\frac1{\vphantom{n^\beta}2}+\frac{a}{2n^{\vphantom{\beta}\gamma}}$ (resp. $\frac1{\vphantom{n^\beta}2}-\frac{a}{2n^{\vphantom{\beta}\gamma}}$) except at the bond of vertices $\{-1,0\}$ where the rate to the right (resp. left) is given by $\frac{\alpha}{2n^\beta}+\frac{a}{2n^{\vphantom{\beta}\gamma}}$ (resp. $\frac{\alpha}{2n^\beta}-\frac{a}{2n^{\vphantom{\beta}\gamma}}$). Above, $\alpha>0$,  $\gamma\geq \beta\geq 0$, $a\geq 0$.  For $\beta<1$, we show that the limit density fluctuation field is an  Ornstein-Uhlenbeck process defined on the Schwartz space if $\gamma>\frac12$, while for $\gamma = \frac12$ it is an energy solution of the stochastic Burgers equation. For  $\gamma\geq\beta=1$, it is an Ornstein-Uhlenbeck  process associated to the heat equation with Robin's boundary conditions.  For $\gamma\geq\beta> 1$, the limit density fluctuation field is an Ornstein-Uhlenbeck  process associated to the heat equation with Neumann's boundary conditions.
\end{abstract}

\maketitle

%\tableofcontents

\section{Introduction}

Over the last decades, there has been an intense research activity in the derivation of macroscopic laws from suitable underlying stochastic microscopic dynamics. For interacting particle systems of exclusion type,  the scenario is more or less well understood as soon as the jump rates are symmetric (see \cite{kl,Spohn}), but for weakly asymmetric systems only partial answers have been given, and there are still challenging behaviors to establish. Even harder is the derivation of macroscopic laws for microscopic dynamics with local defects. By this we mean that the microscopic particle system is locally perturbed,  and depending on the type of perturbation, the macroscopic laws can hold different boundary conditions.

In this paper we consider the simplest microscopic dynamics of exclusion type: we  add a weak asymmetry, and we also perturb the dynamics at one particular bond. More precisely, our interest focuses on  establishing the crossover of the  equilibrium density fluctuations for the weakly asymmetric  simple exclusion process (WASEP) with strength asymmetry
$an^{2-\gamma}$ (with $a\geq 0$ and  $\gamma\geq\frac{1}{2}$) and with a jump rate at the bond $\{-1,0\}$ that is slower than the rate at other bonds.

Let us go into more details:  particles are distributed on the line $\bb Z$, with the condition that at most one particle per site is allowed. The dynamics can be described as follows: particles at different sites wait independent exponential times; when a clock rings, a particle at the site $x$ jumps to $x+1$ (resp. $x-1$) at rate $\frac{1}{\vphantom{n^\beta}2}+\frac{a}{2n^{\vphantom{\beta}\gamma}}$ (resp. $\frac{1}{\vphantom{n^\beta}2}-\frac{a}{2n^{\vphantom{\beta}\gamma}}$), but the jump rate from $-1$ to $0$ (resp. $0$ to $-1$) is equal to $\frac{\alpha}{2n^\beta}+\frac{a}{2n^{\vphantom{\beta}\gamma}}$ (resp. $\frac{\alpha}{2n^\beta}-\frac{a}{2n^{\vphantom{\beta}\gamma}}$), with $\alpha> 0$ and $\beta\geq 0$; see Figure \ref{fig5} below. In order to have positive rates we have to impose some conditions on the parameters $a,\beta,\alpha,\gamma$, see \eqref{eq:hyp}.

\begin{figure}[H]
\centering
\begin{tikzpicture}
%%%%%%arcos%%%%%%%%%%%%%
\centerarc[thick,<-](2.5,0.3)(10:170:0.45);
\centerarc[thick,->](2.5,-0.3)(-10:-170:0.45);
\centerarc[thick,->](4.5,-0.3)(-10:-170:0.45);
\centerarc[thick,<-](4.5,0.3)(10:170:0.45);
\centerarc[thick,->](6.5,-0.3)(-10:-170:0.45);
\centerarc[thick,<-](6.5,0.3)(10:170:0.45);

%%%%%%%%%%%%%%segmento%%%%%%%%%
\draw (0,0) -- (9,0);

%%%%%%%%%% partículas em azul %%%%%%%%%%%
\shade[ball color=black](1,0) circle (0.25);
\shade[ball color=black](3,0) circle (0.25);
\shade[ball color=black](5,0) circle (0.25);
\shade[ball color=black](8,0) circle (0.25);
\shade[ball color=black](9,0) circle (0.25);

%%%%%%%%%%% partículas em branco %%%%%%%%%%%
\filldraw[fill=white, draw=black]
(0,0) circle (.25)
(2,0) circle (.25)
(4,0) circle (.25)
(6,0) circle (.25)
(7,0) circle (.25);

%%%%%%%%%% rótulos %%%%%%%%%%%%%%
\draw (2.6,-0.1) node[anchor=north] {\small $-2$}
(3.6,-0.1) node[anchor=north] {\small $-1$}
(4.6,-0.1) node[anchor=north] {\small $0$}
(5.6,-0.1) node[anchor=north] {\small $1$}
(6.6,-0.1) node[anchor=north] {\small $2$};
\draw (2.5,0.8) node[anchor=south]{\small $\frac{1}{2}\!+\!\frac{a}{2n^\gamma}$};
\draw (2.5,-0.8) node[anchor=north]{\small $\frac{1}{2}\!-\!\frac{a}{2n^\gamma}$};
\draw (6.5,0.8) node[anchor=south]{\small $\frac{1}{2}\!+\!\frac{a}{2n^\gamma}$};
\draw (6.5,-0.8) node[anchor=north]{\small $\frac{1}{2}\!-\!\frac{a}{2n^\gamma}$};
\draw (4.5,0.8) node[anchor=south]{\small $\frac{\alpha}{2n^\beta}\!+\!\frac{a}{2n^\gamma}$};
\draw (4.5,-0.8) node[anchor=north]{\small $\frac{\alpha}{2n^\beta}\!\!-\!\!\frac{a}{2n^\gamma}$};
\end{tikzpicture}
\caption{Illustration of the jump rates. The bond of vertices $\{-1,0\}$ has particular rates associated to it.}\label{fig5}
\end{figure}
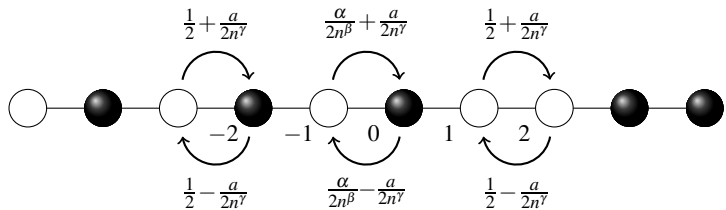
For the choice $a=\beta=0$ and $\alpha=1$, we recover the symmetric simple exclusion process (SSEP), which has been  deeply investigated  in the literature and whose density fluctuations  are given by an Ornstein-Uhlenbeck (OU)  process (see \cite{Ravi1992}), and consequently the fluctuations are Gaussian. The choice $\alpha=\gamma=1$ and $\beta=0$ corresponds to the WASEP whose equilibrium density fluctuations were studied in \cite{DMPS} and the non-equilibrium fluctuations were studied in \cite{DG}. For this regime of the strength asymmetry, namely $\gamma=1$, the limiting process is an OU process taking values in the Schwartz space and consequently the fluctuations are Gaussian again. The only difference with respect to the SSEP limit is that this OU process has a drift term (coming from the asymmetric part of the dynamics) which can be removed by taking the system in a reference frame, or else, by performing a Galilean transformation of the system, from where we recover exactly the OU limit of the SSEP. When removing the drift to the system, there is no effect of the asymmetry, and therefore one has to strengthen the asymmetry by decreasing the value of $\gamma$. In this very same regime, but for a stronger asymmetry, namely $\gamma=\frac{1}{2}$, the non-equilibrium fluctuations  were derived in \cite{BerG}, being the limiting process a solution of the stochastic Burgers  equation (SBE). In this case, the strong asymmetry gives rise to a non-linear term in the stochastic partial differential equation. Finally, the crossover for the equilibrium density fluctuations, in the regime $\gamma\in[\frac{1}{2},1]$, has been established in \cite{gj2012,gj2014}. More precisely, for $\gamma\in{(\frac{1}{2},1]}$ the limit is the same OU process as in the SSEP limit,  and for this reason the process belongs to the Edwards-Wilkinson \cite{EW} universality class, but for $\gamma=\frac{1}{2}$ the limit is a solution of the SBE and the system belongs to the Kardar-Parisi-Zhang (KPZ) \cite{KPZ} universality class.

  In all the previous results the slow bond was not taken into account. The SSEP with a slow bond has been investigated in \cite{fgn3} without the weak asymmetry. In that paper the equilibrium density fluctuations were derived for $\alpha>0$, $\beta\geq 0$ and $a=0$. The authors proved a phase transition depending on the regime of $\beta$: for $\beta<1$ the limit is the same  OU process as in the SSEP limit, for $\beta=1$ the limit is an OU process defined on a Fr\'echet space in which the functions have a boundary condition of Robin's type; and for $\beta>1$ the limit is an OU process defined on a Fr\'echet space in which the functions  have a boundary condition of Neumann's type.

In this paper we superpose all these dynamics, since we perturb the SSEP by a weak asymmetry of strength $an^{2-\gamma}$ (with $a\geq 0$ and $\gamma\geq \frac{1}{2}$) and we introduce a slow bond at $\{-1,0\}$.  The system is taken under the invariant state, namely the Bernoulli product measure of parameter $\rho\in{(0,1)}$ that we denote by $\nu_\rho$. We also take it in a reference frame so that we do not see the transport behavior of the system. In order to make the presentation simpler, we choose $\rho=\frac{1}{2}$ for which the transport velocity is zero and the Galilean transformation is not necessary. Nevertheless,  all our results hold for other values of $\rho$. Moreover, we point out that the strength of importance is  in fact $\gamma\in [1/2,2]$, because for $\gamma>2$ the asymmetry strength $n^{2-\gamma}$ goes to zero. Since the proofs stand also for $\gamma>2$ we state the results in the general setting.

We also underline that by perturbing microscopically the system, one can lose nice properties on its invariant states, as it is the case, for example, for the totally asymmetric simple exclusion process, see \cite{Timo2001}.  For that model the invariant states are no longer the Bernoulli product measures and for that reason the asymptotic behavior of the system is very hard to derive. However in our case, and as it happens for the models of \cite{fgn3}, even with the weak asymmetry, the Bernoulli product measures are still invariant. This point is crucial in what follows. We derive the density fluctuations of the system and we prove the crossover from the Edwards-Wilkinson universality class to the KPZ universality class as in \cite{gj2014,GJS} for the regime $\beta<1$; in the other cases we obtain the limiting processes as in  \cite{fgn3} (see Figure \ref{fig:energy}).

\begin{figure}[!h]
\centering
\begin{tikzpicture}

\begin{scope}[xshift=2cm]
%%%%%%%%%% regions %%%%%%%%%%
\fill[fill=black!20!white] (0,1)--(1,1)--(1.98,2)--(1.98,3.85)-- (0,3.85)--cycle;
\fill[pattern=dots] (2,2)--(2,3.9)--(3.9,3.9)--cycle;
\draw[ultra thick] (2,2)--(2,3.9);
\draw[ultra thick, color=white] (1,1)--(0,1);
%\draw[ultra thick, dashed] (0,1)--(0.5,1);
\draw[ultra thick, dash pattern=on 3.5pt off 1.5pt] (1,1)--(0,1);
\filldraw[fill=white, draw=black, thick] (2,2) circle (2pt);
%%%% axes  %%%%%%%%
\draw[->] (-0.5,0)--(4.5,0) node[below]{$\beta$};
\draw[->] (0,-0.5)--(0,4.2) node[left]{$\gamma$};
 \draw (2pt,2) -- (-2pt,2) node[left]{$1$};
 \draw (2,2pt) -- (2,-2pt) node[below]{$1$};
 \draw (1,2pt)--(1,-2pt) node[below]{$\frac{1}{2}$};
 \draw (0,1) node[left]{$\frac{1}{2}$};
\end{scope}

%%%%  legends  %%%%%
\draw[ultra thick, dash pattern=on 3.5pt off 1.5pt] (0,-1.5)--(1,-1.5) node[right]{Stochastic Burgers equation (KPZ regime)};
\fill[black!20!white] (0,-2) rectangle (1,-2.5);
\draw (1,-2.25) node[right]{OU process with no boundary conditions};
\draw[ultra thick] (0,-3)--(1,-3) node[right]{OU process with Robin's boundary conditions};
\fill[pattern=dots] (0,-3.5) rectangle (1,-4);
\draw (1,-3.75) node[right]{OU process with Neumann's boundary conditions};
\filldraw[fill=white, draw=black, thick] (0.5,-4.5) circle (2pt) ;
\draw (1,-4.5) node[right]{OU process with Robin's boundary conditions and stronger noise};
\end{tikzpicture}
\caption{Macroscopic density fluctuations.
In the region $\beta>\gamma$ the process is not defined (negative rates).}
\label{fig:energy}
\end{figure}
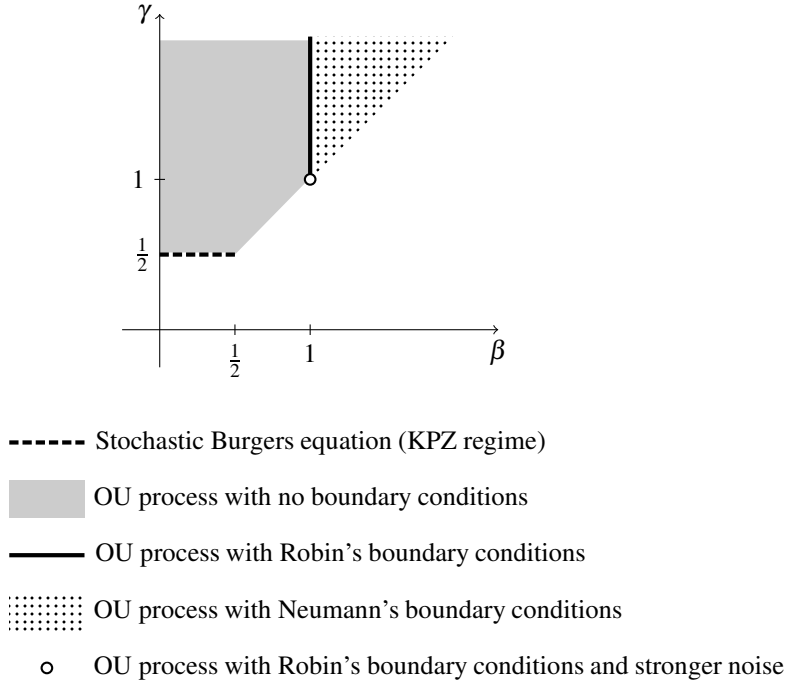

The structure of the proof is standard and consists in showing  tightness plus the  characterization of the limit points by a martingale problem. However, the main argument of the proof needs a very careful investigation: more precisely, the derivation of a second order Boltzmann-Gibbs principle, as stated in \cite[Theorem 7]{gj2014} and \cite[Theorem 3.2]{GJS}, is challenging. This principle allows to replace certain additive functionals of local functions of the dynamics by additive functionals given in terms of the density of particles. However, the ideas of \cite{gj2014,GJS}  do not apply to the model considered here, since our rates are not bounded from below and the usual spectral gap inequality is unknown.  In this paper we expose a new way to estimate the error in the replacement performed in the Boltzmann-Gibbs principle which avoids the spectral gap inequality. Our new argument  consists in splitting the asymmetric part of the current in certain local functions of the dynamics. In each one of these functions we are able to replace the occupation site variable by its average on  big boxes, through a multi-scale analysis as in the spirit of \cite{G2008,gj2014,GJS}. The presence of the slow bond makes this analysis more complicated but we are able to control the errors in such a way that   we recover the same behavior of the system as if there was no slow bond.

The  method presented here is not limited to our model and can be applied to other works: for instance, with our approach we are able to recover the results of \cite{G2008},  where it has been proved that the density fluctuation field for the asymmetric simple exclusion process does not evolve up to the time scale $tn^{\frac43}$. More remarkably, we recover the same behavior for the energy fluctuation field in the one-dimensional Hamiltonian system with exponential interactions considered in \cite{BG}. In that paper, to overcome the non-existence of the spectral gap, Bernardin and Gon\c calves propose an alternative approach which is based on duality properties in Hilbert spaces and computations of some resolvent norms, and is more complicated than the direct estimates that we give  here. We also emphasize that our method can be used to derive the density fluctuations for more general interacting particle systems for which the spectral gap inequality is not known, like exclusion processes, zero-range processes and stochastically perturbed Hamiltonian systems with polynomial potentials. Let us remark  that for stochastically perturbed Hamiltonian systems with general interactions, to our knowledge, the only known result on the energy fluctuations is the work of \cite{OllaMakiko} where the stochastic noise has to be strong enough in order to get the correct bound for the spectral gap inequality. In \cite{BGJ,BGJSS} it was considered an harmonic potential with an exchange noise, and the non-existence of the spectral gap was overcome by using the structure of the invariant measure and the linearity of the deterministic dynamics which permit to use  Fourier transforms.

We believe that our technique can be carried out for proving new results: for instance, in the class of Hamiltonian systems without the spectral gap property, one could work out the case of polynomial potentials or, at least, some small perturbation of the harmonic potential; and furthermore, one could investigate the class of kinetically constrained lattice gases which have been introduced and intensively studied in the literature, see \cite{GLT,GJS}. For these models, one should be able to repeat our multi-scale argument for higher degree polynomial functions,  as done in \cite{GJCPAM}. These are works in progress.

Finally, for the model considered in this paper, we  could also derive the equilibrium fluctuations for the current of particles as in \cite{gj2014} by relating the density fluctuation field with the current of particles and we would get the same results as if there was no slow bond, see \cite{gj2014,fgn3} for details.

Here follows an outline of the paper. In Section \ref{sec:model} we first introduce the  model, we define the Fr\'echet spaces where the density fluctuations fields are defined and study the invariant measures. Section \ref{sec:results} is devoted to stating the main results, namely the second order Boltzmann-Gibbs principle (Theorem \ref{theo:BG}),  the convergence to the OU process (Theorem \ref{thm:crossover}) depending on the values of the parameters and the crossover to the SBE (Theorem \ref{theo:fluct}).  The derivation of the equilibrium density fluctuations is detailed in Sections \ref{sec:comp} and \ref{sec:proof}. The second order Boltzmann-Gibbs principle is entirely proved in Section \ref{sec:BG}, and then, in Section \ref{sec:auxi}, we use the new techniques developed in the latter section in order to obtain two  auxiliary estimates.  Appendix \ref{app} provides some necessary conditions on the semi-groups associated to the macroscopic fluctuations.

\section{Notations and definitions}\label{sec:model}

\subsection{The model}
Let $n$ be a positive integer and $\Omega = \{0,1\}^{\bb Z}$ be the state space of the Markov process  $\{\eta_t\,; \, t \geq 0\}$ whose dynamics can be entirely defined by its infinitesimal generator $\mathcal{L}_n$. The latter is defined on local functions $f: \Omega \to \bb R$ by
\begin{equation*}
\mathcal L_nf(\eta) = \sum_{x \in \bb Z}\xi^n_{x,x+1}(\eta) \nabla_{x,x+1} f(\eta)+\xi^n_{x,x-1}(\eta) \nabla_{x,x-1} f(\eta)
\end{equation*}
where for $\eta\in\Omega$ and $x,y\in\mathbb Z$, we denote $\nabla_{x,y} f(\eta)=f(\eta^{x,y})-f(\eta)$ and $\eta^{x,y} \in \Omega$ is defined as
\[
\eta^{x,y}(z)=
\left\{
\begin{array}{rl}
\eta(y);& z=x\\
\eta(x);& z=y\\
\eta(z);& z\neq x,y.\\
\end{array}
\right.
\]
The rates introduced above are chosen in the following way: for $x\neq{-1}$,
\[
  \xi^n_{x,x+1}(\eta) =\Big(\frac{1}{2} + \frac{a}{2 n^{\gamma}}\Big) \; \eta(x)(1-\eta(x+1)),
\]
\[
  \xi^n_{-1,0}(\eta) =\Big(\frac{\alpha}{2n^\beta} + \frac{a}{2 n^{\gamma}}\Big)\; \eta(-1)(1-\eta(0)),
\]
and for $x\neq 0$,
\[
  \xi^n_{x,x-1}(\eta) =\Big(\frac{1}{2} - \frac{a}{2 n^{\gamma}}\Big) \; \eta(x)(1-\eta(x-1)),
\]
\[
  \xi^n_{0,-1}(\eta) =\Big(\frac{\alpha}{2n^\beta} - \frac{a}{2 n^{\gamma}}\Big)\; \eta(0)(1-\eta(-1))\,,
\]
where $\alpha >0$, $a\geq 0$, $\beta\geq 0$, $\gamma \geq \frac{1}{2}$ are real parameters. See Figure \ref{fig5} for an illustration of the dynamics. In order to avoid negative rates, in the following we always assume
\begin{equation}
(\gamma > \beta) \quad \text{ or } \quad (\beta = \gamma\ \text{ and }\ \alpha \ge a). \label{eq:hyp}
\end{equation}  Notice that we allow the case $\gamma=\beta$ and $a=\alpha$. In this situation, the slow bond $\{-1,0\}$ is totally asymmetric with  left rate $\xi_{0,-1}^n$ equal to zero, and right rate $\xi_{-1,0}^n$ vanishing as $n \to\infty$.

Let us fix $T>0$. We are interested in the evolution of this exclusion process in the diffusive time scale, thus we denote by $\{\eta_{tn^2}\,; \, t\in [0,T]\}$ the Markov process on $\Omega$ associated to the generator $n^2 \mathcal{L}_n$. The path space of trajectories which are right-continuous, with  left-limits and taking values in $\Omega$ is denoted by $\mathcal{D}([0,T],\Omega)$. For any initial probability measure $\mu$ on $\Omega$, we denote by $\bb P_{\mu}$ the probability measure on  $\mathcal{D}([0,T],\Omega)$ induced by $\mu$ and the Markov process $\{\eta_{tn^2}\; ;\; t \in [0,T]\}$.

\subsection{Definition of $\mathcal{S}'_\beta(\bb R)$ and of the operators $\nabla_\beta$ and $\Delta_\beta$}

\begin{definition}\label{def:norm}
Fix $\alpha>0$. For any function $\vphi:\bb R\to \bb R$ we define the norm:
\[
\Vert \vphi \Vert_{2,\beta}^2 =  \int_{\bb R} \vphi^2(u) du + \mathbf{1}_{\beta=1} \Big(\frac{\varphi^2(0)}{\alpha}  + \mathbf{1}_{\gamma=1} \frac{a\; \varphi^2(0)}{\alpha^2}\Big).
\]
Let $\mathbf{L}^2_{\beta}(\mathbb{R})$ be the space of functions $\vphi:\bb R\to \bb R$ such that  $\Vert \vphi \Vert_{2,\beta} < +\infty$.
\end{definition}
When $\beta \neq 1$, the norm $\Vert \cdot \Vert_{2,\beta}$ is the usual ${\mb L}^2(\bb R)$-norm with respect to the Lebesgue measure, and for the sake of simplicity we will rewrite it as $\Vert \cdot \Vert_{2}$. Despite the norm above depends on $a, \alpha$ and $\gamma$, for simplicity of notation we do not index on them. Given $\vphi:\bb R \to \bb R$, we denote:
\[
\vphi(0^+):= \lim_{\substack{u\to 0\\u>0}} \vphi(u) \quad \text{ and } \quad \vphi(0^-):= \lim_{\substack{u\to 0\\u<0}} \vphi(u),
\]
whenever these limits exist.

\begin{definition}
We define $\mc S(\bb R \backslash \{0\})$ as the space of functions $\vphi:\bb R\to\bb R$ such that: \begin{enumerate}
\item[(i)]  $\vphi$ is smooth on $\bb R\backslash\{0\}$, i.e.~$\vphi \in \mc C^{\infty}(\bb R\backslash\{0\})$,
\item[(ii)]  $\vphi$ is continuous from the right at 0,
\item[(iii)]  for all non-negative integers $k,\ell$, the function $\vphi$ satisfies
\begin{equation}\label{eq2.2}
\Vert \vphi \Vert_{k,\ell}:= \sup_{u \neq 0} \Big| (1+|u|^\ell) \frac{d^k\vphi}{du^k}(u)\Big| < \infty.
\end{equation}
\end{enumerate}
\end{definition}
\begin{remark}
It is a consequence of \eqref{eq2.2} that  the side limits
\[
\frac{d^k\vphi}{du^k}(0^+)\qquad\textrm{and}\qquad \frac{d^k\vphi}{du^k}(0^-)
\]
exist for any integer $k \ge 0$.
\end{remark}
\begin{definition}\label{def23}
\begin{enumerate}
\item For $\beta <1$, we define
 \[\mc S_\beta(\bb R)\;:=\; \mc S(\bb R \backslash \{0\})\cap \mc C^\infty(\bb R).\] In other words, in this case $\mc S_\beta(\bb R)$  is the usual Schwartz space $\mc S(\bb R)$.
\item For $\beta=1$, we define $\mc S_\beta(\bb R)$ as the subset of $\mc S(\bb R \backslash \{0\})$ composed of functions $\vphi$ such that
\begin{equation} \frac{d^{2k+1}\vphi}{du^{2k+1}}(0^+)= \frac{d^{2k+1}\vphi}{du^{2k+1}}(0^-) = \alpha  \Big(\frac{d^{2k}\vphi}{du^{2k}}(0^+)-\frac{d^{2k}\vphi}{du^{2k}}(0^-)\Big)
\label{eq:bound_beta_1}\end{equation}
for any integer $k\geq 0$.
\item For $\beta >1$, we define $\mc S_\beta(\bb R)$ as the subset of $\mc S(\bb R \backslash \{0\})$ composed of functions $\vphi$ such that \begin{equation*}\frac{d^{2k+1}\vphi}{du^{2k+1}}(0^+)=\frac{d^{2k+1}\vphi}{du^{2k+1}}(0^-)=0
\end{equation*}
for any integer $k\geq 0$.
\end{enumerate}
Finally, let $\mc S'_\beta(\bb R)$ be the topological dual of $\mc S_\beta(\bb R)$.
\end{definition}
\begin{definition}\label{def24}
We define  $\nabla_\beta: \mc S_\beta(\bb R) \to \mc S(\bb R\backslash \{0\})$ and $\Delta_\beta: \mc S_\beta(\bb R) \to \mc S_\beta(\bb R)$ as the operators acting on functions $\vphi \in \mc S_\beta(\bb R)$ as
\[
\nabla_\beta\vphi(u)= \begin{cases} \displaystyle \frac{d\vphi}{du}(u), & \text{ if } u \neq 0,\\ \\ \displaystyle  \frac{d\vphi}{du}(0^+), & \text{ if } u = 0, \end{cases}
\quad \text{and} \quad \Delta_\beta\vphi(u)= \begin{cases}\displaystyle \frac{d^2\vphi}{du^2}(u), & \text{ if } u \neq 0,\\ \\ \displaystyle\frac{d^2\vphi}{du^2}(0^+), & \text{ if } u = 0. \end{cases}
\]
\end{definition}
When $\beta<1$, the operator $\nabla_\beta$ (resp. $\Delta_\beta$) coincides with the usual gradient operator $\nabla$ (resp. Laplacian operator $\Delta$). Notice that the definitions above are closely related to Definitions 2.3--2.6 given in \cite{fgn3}, except that here we add a minor correction. Under the new Definitions \ref{def23} and \ref{def24}, if $\vphi\in \mc S_\beta (\bb R)$, then
\begin{equation}\label{contingency}
\Delta_\beta T_t^\beta \vphi\in \mc S_\beta(\bb R)\,,
\end{equation}
where $T_t^\beta$ is the semi-group of the partial differential equation associated to the macroscopic evolution, as defined in Appendix \ref{app} (see also \cite[Section 2.3]{fgn3}). The inclusion above is required when characterizing the limit points of the processes (see Section \ref{ssec:limitpoints}). We left to Appendix \ref{app} the proof of \eqref{contingency}.

Finally, for $\vphi \in \mc S_\beta(\bb R)$, $x \in \bb Z$ and $n \in \bb N$, we define the discrete approximations of the first and second derivatives of $\vphi$ as follows:
\begin{equation*}
\nabla_x^n\vphi:=n\Big\{\vphi\Big(\frac{x+1}n\Big)-\vphi\Big(\frac{x}n\Big)\Big\}\quad \textrm{and}\quad
\Delta_x^n\vphi:=n^2\Big\{\vphi\Big(\frac{x+1}{n}\Big) -2\vphi\Big(\frac{x}{n}\Big)+\vphi\Big(\frac{x-1}{n}\Big)\Big\}.
\end{equation*}

\subsection{Invariant measures}
Let $\rho \in (0,1)$ and let $\nu_\rho$ be the product Bernoulli measure on $\Omega$ with density $\rho$:
\[
\nu_\rho\{\eta(x)=1\} = \rho.
\]
The  measures $\{\nu_\rho\,;\, \rho \in (0,1)\}$ are invariant but not reversible with respect to the evolution of $\{\eta_t\,; \,t \geq 0\}$.
To assure the invariance, it is enough to check that
\begin{equation}\label{eq2.3}
\int_\Omega \mathcal L_nf(\eta)\, \nu_\rho(d\eta) =0
\end{equation}
for any local function $f:\{0,1\}^{\bb Z}\to \bb R$.
Given a local function $f$, let $L\in \bb N$ be such that $f$ depends only on the occupation of sites $I=\{-L,-L+1,\cdots, L\}$. Therefore, $f$ can be identified with a function $f:\{0,1\}^I\to \bb R$.
Given a set $A$, we denote by $\textbf{1}_A(u)$ the function which equals 1 when $u\in A$, and 0 when $u\notin A$. By the identity
\begin{equation*}
f(\eta)\;=\;\sum_{\tilde{\eta}\in \{0,1\}^I} f(\tilde{\eta}){\bf 1}_{\{\tilde{\eta}\}}(\eta)\,,
\end{equation*}
and observing that \[{\bf 1}_{\{\tilde{\eta}\}}(\eta)= \prod_{x\in\bb Z}\big(1-|\eta(x)-\tilde{\eta}(x)|\big),\] we can rewrite $f$ as a weighted sum of products  of $\eta(x)$ or $1-\eta(x)$. By linearity, in order to obtain \eqref{eq2.3} it is enough to prove it for functions of the form $f(\eta)=\eta(x_1)\cdots\eta(x_k)$, where $x_1,...,x_k$ are integers.  Let us illustrate the case $f(\eta)=\eta(0)$. In this situation,
\begin{equation*}
\begin{split}
\mc L_n f(\eta)  = & \Big(\frac{\alpha}{2n^\beta}+\frac{a}{2n^\gamma}\Big)\eta(-1)(1-\eta(0)) -  \Big(\frac{\alpha}{2n^\beta}-\frac{a}{2n^\gamma}\Big)\eta(0)(1-\eta(-1))\\
& +  \Big(\frac{1}{2}-\frac{a}{2n^\gamma}\Big)\eta(1)(1-\eta(0)) -  \Big(\frac{1}{2}+\frac{a}{2n^\gamma}\Big)\eta(0)(1-\eta(1))\,,\\
\end{split}
\end{equation*}
leading to \eqref{eq2.3}.
To check the remaining cases is a long albeit simple calculation and we leave it to the reader.

\bigskip

We denote by $\chi(\rho)$ the static compressibility of the system defined as $\chi(\rho):=\rho(1-\rho)$. In the following we consider the Markov process starting from the equilibrium measure $\nu_\rho$. For the sake of clarity we denote the probability measure $\bb P_{\nu_\rho}$ by $\bb P_\rho$ and the corresponding expectation by $\bb E_\rho$.

\section{Statement of the main results}
\label{sec:results}
\subsection{Boltzmann-Gibbs principle}
The second order estimate below plays an essential  role in the proof of density fluctuations.
Based on \cite{gj2014}, we give a new version of the second-order Boltzmann-Gibbs principle. Roughly speaking, we look at the first-order correction for the usual limit projection of the space-time fluctuations of some specific field. More precisely, we focus on the functional $(\eta(x)-\rho)(\eta(x+1)-\rho)$ and show how its fluctuations can be written as a linear functional of the density field plus a quadratic functional of this same field. The crucial point relies on the sharp quantitative bounds on the error that we are able to obtain when we perform the aforementioned replacement.

Hereafter we denote $\bar{\eta}(y)=\eta(y)-\rho$ the centered occupation variable at the site $y\in \bb Z$. In the following, we let $C$  denote  a constant that does not depend on the variables $ L, n$ nor $t$ introduced below.

\begin{theorem}[Second-order Boltzmann-Gibbs principle]\label{theo:BG}

Let $v:\mathbb{Z}\to{\mathbb{R}}$ be a  function such that
\begin{equation}\label{vsummable}
 \|v\|_{2,n}^2:=\frac{1}{n}\sum_{x\in\mathbb{Z}}v^2(x)<\infty.
\end{equation}
There exists a constant $C>0$, such that for any $t>0$, and any positive integers $L, n$:
\begin{multline}
\label{eq:BGexpo}
\mathbb{E}_{{\rho}}\bigg[\Big(\int_{0}^t \sum_{x\in\mathbb{Z}}v(x)\Big\{\bar{\eta}_{sn^{2}}(x)\bar{\eta}_{sn^{2}}(x+1)-\big(\vec{\eta}_{sn^{2}}^{L}(x)\big)^2+\frac{\chi(\rho)}{L}\Big\}\; ds\Big)^2\bigg]\\
\leq  Ct\Big\{\frac{L}{n} +\frac{n^\beta}{\alpha n}+\frac{tn}{L^2}\Big\}\|v\|_{2,n}^2 + Ct\Big\{\frac{n^\beta(\log_2(L))^2}{\alpha n}\Big\}\frac{1}{n}\sum_{x\neq -1} v^2(x),
\end{multline}
where $\vec{\eta}^{L}(x)$ denotes the empirical average on the box of size $L$ situated at the right of site $x$, and is defined by
\begin{equation}
\vec{\eta}^{L}(x)=\frac{1}{L}\sum_{y=x+1}^{x+L}\bar{\eta}(y). \label{eq:mean}
\end{equation}
\end{theorem}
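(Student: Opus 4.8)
The plan is to establish the second-order Boltzmann–Gibbs principle through a \emph{multi-scale} (doubling) argument in the spirit of \cite{gj2014,GJS,G2008}, but adapted to handle the fact that our jump rates are not bounded from below near the slow bond, so the usual spectral gap estimate is unavailable. The central quantity to control is the time-integrated functional
\[
\int_0^t \sum_{x\in\bb Z} v(x)\Big\{\bar\eta_{sn^2}(x)\bar\eta_{sn^2}(x+1)-\big(\vec\eta^{L}_{sn^2}(x)\big)^2+\tfrac{\chi(\rho)}{L}\Big\}\,ds,
\]
and the strategy is to pass from the microscopic product $\bar\eta(x)\bar\eta(x+1)$ to the coarse-grained square $(\vec\eta^L(x))^2$ by a sequence of intermediate replacements at dyadic scales $2^k$ for $k=0,1,\dots,\lfloor\log_2 L\rfloor$, which explains the appearance of the factor $(\log_2 L)^2$ in the error bound.

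\textbf{Key steps.} First I would invoke the standard $\mb L^2$ variational estimate: for a mean-zero (with respect to $\nu_\rho$) local function $F$, one has
\[
\bb E_\rho\Big[\Big(\int_0^t F(\eta_{sn^2})\,ds\Big)^2\Big]\;\le\; C\,t\,\big\langle F,(-n^2\mc L_n)^{-1}F\big\rangle_\rho,
\]
which reduces the problem to bounding an $H_{-1}$-type norm of the integrand against the Dirichlet form associated with $\mc L_n$. Because $\mc L_n$ is not reversible, I would first split it into its symmetric part $\mc S_n$ and its antisymmetric part $\mc A_n$; the replacement error is governed by $\mc S_n$, while the weak-asymmetry contribution (of order $n^{2-\gamma}$) must be tracked separately since it is precisely what produces the KPZ/SBE nonlinearity in the regime $\gamma=\tfrac12$. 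Second, the core of the argument is the \emph{one-step doubling} estimate: replacing $\bar\eta(x)\bar\eta(x+1)$ by $(\vec\eta^{2^k}(x))^2$ minus the corresponding $\chi(\rho)/2^k$ correction costs an error controlled, via the variational formula, by the Dirichlet form on a box of size $2^k$; summing the telescoping sequence of these one-step errors over $k$ yields the $L/n$, $tn/L^2$ and $(\log_2 L)^2$ terms. The delicate point is the bond $\{-1,0\}$: the rate there is of order $\alpha n^{-\beta}$, so when the box of size $2^k$ straddles site $-1$ the Dirichlet form picks up a factor $\alpha^{-1} n^{\beta}$, which is the origin of the $\tfrac{n^\beta}{\alpha n}$ terms in both lines of \eqref{eq:BGexpo}; the second, $(\log_2 L)^2$-weighted term carries the sum $\sum_{x\neq-1}v^2(x)$ precisely because only boxes crossing the slow bond incur this slow-rate penalty, and the $x=-1$ contribution is removed.

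\textbf{Main obstacle.} I expect the crucial difficulty to be the replacement step across the slow bond without a spectral gap. The usual approach bounds each dyadic replacement by a spectral gap inequality on a box, but here the box containing $\{-1,0\}$ has a bottleneck: the slow bond rate degenerates like $n^{-\beta}$. The plan is therefore to avoid the spectral gap entirely and instead to obtain the necessary estimate by a direct computation of the relevant resolvent/Dirichlet-form quantities, handling the cut bond by comparison with the symmetric dynamics and carefully accounting for the $\alpha^{-1}n^\beta$ factor. Concretely, I would isolate the contribution of the slow bond in the current decomposition, write the asymmetric part of the current as a sum of local functions, and for each of them replace the occupation variable by its box average through the multi-scale analysis, controlling the slow-bond-crossing terms separately so that the extra $n^\beta/(\alpha n)$ cost appears only on the terms indexed by $x\neq -1$. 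Assembling the telescoped errors and optimizing over the scales then gives \eqref{eq:BGexpo}, showing that the system recovers the same limiting behavior as if there were no slow bond.
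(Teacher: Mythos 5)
You have correctly identified the overall framework (the Kipnis--Varadhan bound $\bb E_\rho[(\int_0^t F\,ds)^2]\le Ct\|F\|_{-1}^2$, a dyadic multi-scale replacement producing the $(\log_2 L)^2$ factor, and the bookkeeping of how often a box straddles $\{-1,0\}$, which produces the $n^\beta/(\alpha n)$ terms). But there is a genuine gap at the heart of the argument: your ``one-step doubling estimate,'' in which $\bar\eta(x)\bar\eta(x+1)$ is replaced by $(\vec\eta^{2^k}(x))^2-\chi(\rho)/2^k$ with an error ``controlled, via the variational formula, by the Dirichlet form on a box of size $2^k$,'' is precisely the step that requires a spectral gap inequality on the box (or rates bounded from below), and it is exactly what fails here. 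Announcing that one will instead ``directly compute the relevant resolvent/Dirichlet-form quantities'' does not resolve this: without an additional structural idea, the variational formula does not let you integrate a \emph{square} of an empirical average by parts against $f$, because the quantity being replaced is not a telescoping sum of one-bond gradients multiplied by something invariant under the exchange $\eta\mapsto\eta^{z,z+1}$.

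The paper's resolution is a specific algebraic decomposition into the six terms \eqref{eq:term1}--\eqref{eq:term6}, each of the form (local function $\psi$ with support on \emph{one side} of the site) $\times$ (difference of averages on the \emph{other side}). For instance, \eqref{eq:term3} is $\vecleft\eta^{\ell_0}(x)\,(\vec\eta^{\ell_0}(x)-\vec\eta^{L}(x))$: the second factor is a telescoping sum of gradients $\bar\eta(z)-\bar\eta(z+1)$ over bonds lying strictly to the right of $x$, while the prefactor $\vecleft\eta^{\ell_0}(x)$ depends only on sites to the left of $x$ and is therefore invariant under each exchange $\eta\mapsto\eta^{z,z+1}$ involved. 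This disjointness of supports is what allows the ``write the integral as twice its half and change variables'' trick, followed by Young's inequality with the bond-dependent weights $\Xi^n_{z,z+1}$, so that the Dirichlet form is used at full strength and the slow bond only contributes its $\alpha^{-1}n^\beta$ factor the (explicitly counted) number of times it is actually crossed. The doubling then alternates between left and right averages (Proposition \ref{doub box}) precisely to preserve this disjointness at every scale, and the terms \eqref{eq:term5}--\eqref{eq:term6} absorb the diagonal corrections $\pm(\bar\eta(x)-\bar\eta(x+1))^2/(2L)$ generated by the change of variables. Without this pairing device (or an equivalent substitute), your plan cannot be carried out as stated, since the direct replacement of $(\vec\eta^{\ell})^2$ by $(\vec\eta^{2\ell})^2$ has no spectral gap to lean on near the slow bond.
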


We report the proof of Theorem \ref{theo:BG}  to Section \ref{sec:BG}.
At the macroscopic level, the density fluctuation field turns out to be of two types (depending on the values of the parameters): either an OU process driven by the heat equation with some boundary conditions, or an energy solution of the SBE equation (as it has been introduced in \cite{gj2014}). The next two subsections give the corresponding definitions.

\subsection{The Ornstein-Uhlenbeck process}
Following \cite{fgn3}, we give here a characterization of the generalized OU process $\mc Y^{\beta}_\bola$ which is a solution of
\begin{equation}
\label{ou}
d\mc Y^{\beta}_t = \frac{1}{2} \Delta_\beta \mc Y^{\beta}_t dt + \sqrt{\chi(\rho)} \nabla_\beta d \mc W_t^{\beta},
\end{equation}
in terms of a martingale problem, where $\{\mc W^{\beta}_t \; ; \;t \in [0,T]\}$ is a $\mc S_\beta'(\bb R)$-valued Brownian motion.

From here on, let $\mc D([0,T],\mc S_\beta'(\bb R))$ (resp.  $\mc C([0,T],\mc S_\beta'(\bb R))$) denote the set of trajectories which are right-continuous and have left limits (resp. continuous) taking values in $\mc S_\beta'(\bb R)$.

\begin{proposition}{\cite[Section 5]{fgn3}}\label{prop:sol_ou}
There exists a unique field $\mc Y^{\beta}_\bola$ which takes values in $\mc C([0,T],\mc S_\beta'(\bb R))$ such that
\begin{enumerate}[(i)]
\item for every $\vphi \in \mc S_\beta(\bb R)$,
\begin{align*}\mathfrak{M}^{\beta}_t(\vphi) & :=\mc Y^{\beta}_t(\vphi) - \mc Y^{\beta}_0(\vphi) - \frac{1}{2}\int_0^t \mc Y^{\beta}_s(\Delta_\beta \vphi)\; ds \\
\mathfrak{N}^{\beta}_t(\vphi)& := \big(\mathfrak{M}^{\beta}_t(\vphi)\big)^2-{\chi(\rho)}t\big\|\nabla_\beta \vphi \big\|_{2,\beta}^2
\end{align*}
are martingales with respect to the filtration $\mc F_t:=\sigma\big(\mc Y^{\beta}_s(\vphi)\; ;\; s \le t, \vphi \in \mc S_\beta(\bb R)\big)$;
\item the field $\mc Y^{\beta}_0$ is a Gaussian field of mean zero and covariance given on $\vphi,\psi \in \mc S_\beta(\bb R)$ by
\[
\bb E_\rho\big[\mc Y^{\beta}_0(\vphi)\mc Y^{\beta}_0(\psi)\big]=\chi(\rho) \int_{\bb R} \vphi(u)\psi(u) du.
\]
Moreover, for each $\vphi \in \mc S_\beta(\bb R)$, the stochastic process $\{\mc Y^{\beta}_t(\vphi) \; ;\; t  \in [0,T]\}$ is Gaussian, being the distribution of $\mc Y^{\beta}_t(\vphi)$ conditionally to $\{\mc F_u \; ; \; u\leq s\}$, normal of mean $\mc Y^{\beta}_s(T_{t-s}^\beta \vphi)$ and variance \begin{equation}
\int_0^{t-s} \big\|\nabla_\beta T_r^\beta \vphi\big\|_{2,\beta}^2 \; dr,\label{eq:variance}
\end{equation} where $T_t^\beta$ is the semi-group associated to $\Delta_\beta$ as defined in Appendix \ref{app}.
\end{enumerate}
\end{proposition}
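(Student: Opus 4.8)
The plan is to follow the Holley--Stroock theory of generalized Ornstein--Uhlenbeck processes, treating existence and uniqueness separately; the whole argument rests on the analytic properties of the semi-group $T_t^\beta$ collected in Appendix \ref{app}, and in particular on the contingency \eqref{contingency}.

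For existence, I would build the field explicitly as a Gaussian process through a Duhamel (mild) representation. Let $\{\mc W_t^\beta\,;\, t\in[0,T]\}$ be an $\mc S_\beta'(\bb R)$-valued Brownian motion and let $\mc Y_0^\beta$ be an independent Gaussian field with the covariance prescribed in (ii). I would then set, for $\vphi\in\mc S_\beta(\bb R)$,
\[
\mc Y_t^\beta(\vphi)\;:=\;\mc Y_0^\beta\big(T_t^\beta\vphi\big)+\sqrt{\chi(\rho)}\int_0^t \nabla_\beta T_{t-r}^\beta\vphi\; d\mc W_r^\beta.
\]
This stochastic integral is well defined because, by the bounds of Appendix \ref{app}, the map $r\mapsto\big\|\nabla_\beta T_r^\beta\vphi\big\|_{2,\beta}^2$ is integrable on $[0,t]$, so that $\mc Y_t^\beta(\vphi)$ is a centered Gaussian variable with finite variance. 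One then checks (i) by an It\^o computation, (ii) directly from the Gaussian structure, and the continuity of $t\mapsto\mc Y_t^\beta$ in $\mc S_\beta'(\bb R)$ through a Kolmogorov-type criterion applied to each seminorm, together with the nuclear structure of $\mc S_\beta(\bb R)$.

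The uniqueness is the crux. Fix $\vphi\in\mc S_\beta(\bb R)$, let $0\le s\le t\le T$, and consider the time-reversed test function $u\mapsto T_{t-u}^\beta\vphi$, which solves $\p_u T_{t-u}^\beta\vphi=-\tfrac12\Delta_\beta T_{t-u}^\beta\vphi$. Applying (i) along a Riemann-sum approximation of this time-dependent test function --- a step that is legitimate precisely because $\Delta_\beta T_r^\beta\vphi\in\mc S_\beta(\bb R)$ by \eqref{contingency}, so that $\mc Y_r^\beta(\Delta_\beta T_{t-r}^\beta\vphi)$ is meaningful --- the drift terms telescope and one obtains that
\[
\mc N_u\;:=\;\mc Y_u^\beta\big(T_{t-u}^\beta\vphi\big)-\mc Y_s^\beta\big(T_{t-s}^\beta\vphi\big),\qquad u\in[s,t],
\]
is a martingale whose quadratic variation, read off from the $\mathfrak N^\beta$-martingale in (i), equals $\chi(\rho)\int_s^u\big\|\nabla_\beta T_{t-r}^\beta\vphi\big\|_{2,\beta}^2\,dr$. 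Showing next that $\exp\{i\theta\mc N_u+\tfrac{\theta^2}{2}\langle\mc N\rangle_u\}$ is a complex martingale (L\'evy's characterization) forces $\mc N_t=\mc Y_t^\beta(\vphi)-\mc Y_s^\beta(T_{t-s}^\beta\vphi)$ to be, conditionally on $\{\mc F_v\,;\, v\le s\}$, centered Gaussian. After the change of variables $r\mapsto t-r$ its variance is $\chi(\rho)\int_0^{t-s}\big\|\nabla_\beta T_r^\beta\vphi\big\|_{2,\beta}^2\,dr$, that is \eqref{eq:variance} carrying the prefactor $\chi(\rho)$ inherited from (i), and its conditional mean is $\mc Y_s^\beta(T_{t-s}^\beta\vphi)$. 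Iterating this identity over a finite grid of times pins down every finite-dimensional distribution of $\mc Y_\bola^\beta$; since cylinder functionals are distribution-determining on $\mc C([0,T],\mc S_\beta'(\bb R))$, the law is unique.

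The main obstacle is not the Gaussian and martingale machinery, which is classical, but the behaviour of the semi-group $T_t^\beta$ on the nonstandard space $\mc S_\beta(\bb R)$ carrying the boundary condition at the slow bond (of Robin type for $\beta=1$ and of Neumann type for $\beta>1$). The facts that $T_t^\beta$ maps $\mc S_\beta(\bb R)$ into itself, that $\Delta_\beta T_t^\beta\vphi\in\mc S_\beta(\bb R)$ --- which is exactly what makes the time-dependent martingale computation run --- and the integrability and continuity estimates on $\big\|\nabla_\beta T_r^\beta\vphi\big\|_{2,\beta}$ are the genuinely delicate ingredients; they are established in Appendix \ref{app}, and with them in hand both existence and uniqueness follow from the scheme above exactly as in \cite[Section 5]{fgn3}.
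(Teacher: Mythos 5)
Your proof follows exactly the scheme the paper relies on by deferring to \cite[Section 5]{fgn3}: existence via the mild (Duhamel) Gaussian representation driven by $\mc W^\beta$, and uniqueness via the time-reversed test function $T^\beta_{t-u}\vphi$ combined with L\'evy's characterization, with the key analytic input being precisely \eqref{contingency}, i.e.\ $\Delta_\beta T^\beta_t\vphi\in\mc S_\beta(\bb R)$, established in Appendix \ref{app}. Your remark that the conditional variance inherits the prefactor $\chi(\rho)$ from the quadratic variation in (i) is correct and worth noting, since the displayed formula \eqref{eq:variance} in the statement omits that factor.
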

We call the random field $\mc Y^{\beta}_\bola$ the generalized OU process of characteristics $\nabla_\beta$ and $\Delta_\beta$. The proof of Proposition \ref{prop:sol_ou} is completely similar to the proof given in \cite[Section 5]{fgn3}.

\subsection{The stochastic Burgers equation}
In this section we recall from \cite{gj2014} the notion of \emph{stationary energy solutions} for the SBE, which reads as
\begin{equation}
\label{eq:sbe1}
d\mc Y_t=\frac{1}{2}  \Delta \mc Y_t dt + a \nabla \mc Y^2_t dt + \sqrt{\chi(\rho)} \nabla d\mc W_t ,
\end{equation}
where $\{\mc W_t\; ; \; t \in [0,T]\}$ is a $\mc S'(\bb R)$-valued Brownian motion. For $\varepsilon >0$ and $x \in \bb R$, we define $i_\varepsilon(x) : \bb R \to \bb R$ as $i_\varepsilon(x)(y)=\varepsilon^{-1}\mathbf{1}_{x<y\le x+\varepsilon}$ for any $y \in \bb R$.

\begin{definition}\label{def:energysol}
A stochastic process $\{\mc Y_t\; ; \;t \in [0,T]\}$ is a stationary energy solution of the SBE \eqref{eq:sbe1} if
\begin{enumerate}
\item the process $\{\mc Y_t \;;\; t \in [0,T]\}$ is stationary, namely, for each $t \in [0,T]$, the  $\mc S'(\bb R)$ random variable $\mc Y_t$ is a white noise of variance $\chi(\rho)$, and it satisfies the following energy estimate: there exists a positive constant $\kappa$ such that
\begin{enumerate}
\item for any $\vphi \in \mc S(\bb R)$ and any $ s,t \in [0,T]$, $s<t$,
\begin{equation}\label{eq:ec1}
\bb E_\rho \bigg[ \Big(\int_s^t \mc Y_r( \Delta \vphi) dr\Big)^2\bigg] \le \kappa (t-s) \|\nabla \vphi\|_2^2.
\end{equation}
\item for any $\vphi \in \mc S(\bb R)$, any $\delta,\varepsilon \in (0,1), \delta < \varepsilon$, and any $s, t \in [0,T]$, $s<t$,
\begin{equation}\label{eq:ec2}
\bb E_\rho\Big[ \big(\mc A_{s,t}^\varepsilon(\vphi)-\mc A_{s,t}^\delta(\vphi)\big)^2\Big] \le \kappa (t-s) \varepsilon \|\nabla \vphi\|_2^2,
\end{equation}
where \[
\mc A_{s,t}^\varepsilon(\vphi)=\int_s^t\int_{\bb R} \mc Y_r\big(i_\varepsilon(x)\big)^2\nabla\vphi(x)dxdr.
\]
\end{enumerate}
\item for any test function $\vphi \in \mc S(\bb R)$ and any $t \in [0,T]$, the process
\[ \mc Y_t(\vphi)-\mc Y_0(\vphi)-\frac{1}{2}\int_0^t \mc Y_s(\Delta \vphi)\; ds - a \mc A_t(\vphi)\]
is a continuous martingale of quadratic variation $t \chi(\rho)  \|\nabla \vphi\|_2^2$, where the process $\{\mc A_t \;; \;t\in [0,T]\}$ is obtained through the following limit, which holds  in the $\mathbf{L}^2(\bb P_\rho)$-sense:
\begin{equation} \mc A_t(\vphi)-\mc A_s(\vphi):=\lim_{\varepsilon \to 0} \mc A_{s,t}^\varepsilon(\vphi).\label{eq:processA}
\end{equation}
\end{enumerate}
\end{definition}

The proof of the existence of $\{\mc A_t \; ;\; t\in [0,T]\}$ given by \eqref{eq:processA} is completely done in \cite[Theorem 1]{gj2014}.
We recall here the result:
\begin{theorem}[\cite{gj2014}]
Let $\{\mc Y_t \; ; \; t \in [0,T]\}$ be a stationary process such that \eqref{eq:ec2} is satisfied. Then, there exists a $\mc S'(\bb R)$-valued stochastic process $\{\mc A_t \; ; \; t \in [0,T]\}$ with continuous paths such that
\[\mc A_t(\vphi)=\lim_{\varepsilon \to 0} \int_0^t\int_{\bb R} \mc Y_r\big(i_\varepsilon(x)\big)^2\nabla\vphi(x)dxdr \] is in $\mathbf{L}^2(\bb P_\rho)$, for any $t\in [0,T]$ and any $\vphi \in \mc{S}(\bb R)$.

\end{theorem}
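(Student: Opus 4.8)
The plan is to construct $\{\mc A_t\}$ as an $\mathbf{L}^2(\bb P_\rho)$-limit along a dyadic sequence of scales, and then to promote the fixed-time convergence to uniform-in-time convergence, so that the continuity of the approximants is inherited by the limit. Throughout I fix $\vphi \in \mc S(\bb R)$ and abbreviate $\mc A^\varepsilon_{s,t} = \mc A^\varepsilon_{s,t}(\vphi)$, dropping the fixed test function; recall that by definition $\mc A^\varepsilon_{0,t}$ is additive in time, $\mc A^\varepsilon_{0,t} = \mc A^\varepsilon_{0,s} + \mc A^\varepsilon_{s,t}$ for $s<t$, and that for fixed $\varepsilon$ the map $t \mapsto \mc A^\varepsilon_{0,t}$ is continuous, being the time-integral of the $\mathbf{L}^2(\bb P_\rho)$-integrand $r \mapsto \int_{\bb R} \mc Y_r(i_\varepsilon(x))^2 \nabla\vphi(x)\, dx$.

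First I would fix $t$ and set $\varepsilon_k = 2^{-k}$. Applying \eqref{eq:ec2} with $s=0$ and the pair $(\varepsilon_{k+1},\varepsilon_k)$ gives
\[
\big\| \mc A^{\varepsilon_{k+1}}_{0,t} - \mc A^{\varepsilon_k}_{0,t} \big\|_{\mathbf{L}^2(\bb P_\rho)} \le \sqrt{\kappa\, t\, \varepsilon_k}\; \|\nabla\vphi\|_2 = \sqrt{\kappa\, t}\; 2^{-k/2}\, \|\nabla\vphi\|_2 .
\]
Since $\sum_k 2^{-k/2} < \infty$, the telescoping series $\mc A^{\varepsilon_0}_{0,t} + \sum_{k\ge 0}(\mc A^{\varepsilon_{k+1}}_{0,t} - \mc A^{\varepsilon_k}_{0,t})$ converges absolutely in $\mathbf{L}^2(\bb P_\rho)$; call its sum $\mc A_t(\vphi)$. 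Because the right-hand side of \eqref{eq:ec2} depends only on the larger scale, the same estimate shows that the full limit $\varepsilon \to 0$, not merely along the dyadic subsequence, exists and coincides with $\mc A_t(\vphi)$, which already yields the displayed $\mathbf{L}^2$-convergence in the statement.

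The delicate point, and the main obstacle, is the continuity of $t \mapsto \mc A_t(\vphi)$. Here I would upgrade the previous step to convergence in $\mathbf{L}^2(\bb P_\rho;\mc C([0,T]))$. Set $D^k_t := \mc A^{\varepsilon_{k+1}}_{0,t} - \mc A^{\varepsilon_k}_{0,t}$, continuous in $t$ with $D^k_0 = 0$; by additivity $D^k_t - D^k_s = \mc A^{\varepsilon_{k+1}}_{s,t} - \mc A^{\varepsilon_k}_{s,t}$, so \eqref{eq:ec2} gives directly
\[
\bb E_\rho\big[(D^k_t - D^k_s)^2\big] \le \kappa\, \varepsilon_k\, (t-s)\, \|\nabla\vphi\|_2^2 .
\]
Since $\{\mc Y_r\}$ is stationary, $D^k$ has stationary increments, and I would invoke a maximal inequality for additive functionals with stationary increments to obtain $\bb E_\rho\big[\sup_{0\le t\le T} (D^k_t)^2\big] \le C(T)\, \varepsilon_k\, \|\nabla\vphi\|_2^2$, up to at most a logarithmic correction that remains summable in $k$. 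This is the crux: the energy estimate is only linear in $t-s$, so the naive Kolmogorov–Chentsov criterion does not apply and one must exploit the additive/stationary structure rather than the modulus alone. Granting the maximal bound, $\sum_k \big\|\sup_{t}|D^k_t|\big\|_{\mathbf{L}^2(\bb P_\rho)} < \infty$, hence $\sum_k D^k$ converges uniformly in $t$ in $\mathbf{L}^2(\bb P_\rho)$; the partial sums being continuous, the limit $t \mapsto \mc A_t(\vphi)$ admits a continuous modification.

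Finally I would check that $\mc A_t$ defines an $\mc S'(\bb R)$-valued random variable. The regularisation $i_\varepsilon(x)$ is independent of $\vphi$ and enters $\mc A^\varepsilon_{0,t}(\vphi)$ only through the linear factor $\nabla\vphi(x)$, so each $\mc A^\varepsilon_{0,t}$, and hence $\mc A_t$, is linear in $\vphi$. Continuity in the Schwartz topology follows from a bound on $\bb E_\rho[\mc A_t(\vphi)^2]$ by a continuous seminorm of $\vphi$: the tail $\sum_{k}(\mc A^{\varepsilon_{k+1}}_{0,t} - \mc A^{\varepsilon_k}_{0,t})$ is controlled by $\|\nabla\vphi\|_2$ through \eqref{eq:ec2}, while the base term $\mc A^{\varepsilon_0}_{0,t}(\vphi)$ is controlled using that each $\mc Y_r$ is a white noise of variance $\chi(\rho)$. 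Combining the linearity and this continuity with the path continuity obtained above produces the desired process $\{\mc A_t\;;\;t\in[0,T]\}$ with continuous paths in $\mc S'(\bb R)$ realising the stated limit.
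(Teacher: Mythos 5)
Your construction of the fixed-time limit is correct and is the standard one: the energy condition \eqref{eq:ec2} makes $\varepsilon\mapsto\mc A^{\varepsilon}_{0,t}(\vphi)$ Cauchy in $\mathbf{L}^2(\bb P_\rho)$ as $\varepsilon\to 0$ (not only along dyadics, as you observe), which settles the displayed convergence. Keep in mind that this theorem is quoted here from \cite{gj2014} and the present paper gives no proof, so the benchmark is the argument of that reference.

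The gap is in the continuity step, which you yourself flag as the crux and then resolve by ``invoking a maximal inequality for additive functionals with stationary increments''. No such inequality is available off the shelf in this setting. The limit field $\mc Y$ is only assumed stationary: it carries no Markov generator, so the Kipnis--Varadhan maximal inequality (the $\sup_t$ version of \cite[Lemma 2.4]{KLO}) cannot be applied, and $D^k$ is not a martingale, so Doob is unavailable. For a general stationary-increment process, the second-moment bound $\bb E_\rho[(D^k_t-D^k_s)^2]\le\kappa\varepsilon_k(t-s)\|\nabla\vphi\|_2^2$ does not by itself yield $\bb E_\rho[\sup_{t\le T}(D^k_t)^2]\le C\varepsilon_k$: dyadic chaining contributes a constant per scale and diverges over infinitely many scales. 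One can rescue your route with a Rademacher--Menshov/M\'oricz maximal inequality over a grid of $N$ points (at the cost of a factor $(\log_2 2N)^2$), together with a separate estimate of the oscillation inside grid cells; but that oscillation estimate in turn requires the crude variance bound $\bb E_\rho\big[\big(\int_{\bb R}\mc Y_r(i_\varepsilon(x))^2\nabla\vphi(x)\,dx\big)^2\big]\le C\varepsilon^{-2}$ coming from the white-noise property, and the whole argument has to be written out --- it is not a citation. The proof in \cite{gj2014} avoids all of this: letting $\delta\to0$ in \eqref{eq:ec2} gives $\bb E_\rho[(\mc A_t(\vphi)-\mc A_s(\vphi)-\mc A^{\varepsilon}_{s,t}(\vphi))^2]\le\kappa(t-s)\varepsilon\|\nabla\vphi\|_2^2$, while the white-noise property of each $\mc Y_r$ gives $\bb E_\rho[(\mc A^{\varepsilon}_{s,t}(\vphi))^2]\le C(t-s)^2\varepsilon^{-2}$; choosing $\varepsilon=(t-s)^{1/3}$ yields $\bb E_\rho[(\mc A_t(\vphi)-\mc A_s(\vphi))^2]\le C(t-s)^{4/3}$, and Kolmogorov--Chentsov applies after all --- your objection that the modulus is ``only linear in $t-s$'' concerns the energy estimate alone, not this interpolated bound. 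Your final paragraph on linearity in $\vphi$ and the $\mc S'(\bb R)$-valued regularization is fine once the path continuity of $t\mapsto\mc A_t(\vphi)$ is secured.
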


\begin{remark} Let us notice that the function $i_\varepsilon(x)$ does not belong to the space of test functions of  $\mc Y_r$. Nevertheless, by doing a similar argument to the one of \cite[Lemma 4.6]{fgn2014} (see also \cite[Remark 4]{gj2014}), one can show that $\mc Y_r(i_\varepsilon(x))$ is well defined. \end{remark}

We are now ready to state the main theorems.

\subsection{The density fluctuation field}
Fix $\beta\geq 0, \gamma>0$ and $n\in\bb N$. The {\em density fluctuation field} $\{\mc Y_t^{\beta,\gamma,n}\;;\; t \in [0,T]\}$ is the $\mc S'_\beta(\bb R)$-valued process given on $\vphi \in \mc S_\beta(\bb R)$ by
\[
\mc Y_t^{\beta,\gamma,n}(\vphi): = \frac{1}{\sqrt n} \sum_{x \in \bb Z} (\eta_{tn^2}(x)-\rho) \vphi\Big(\frac{x}{n}\Big).
\]

When there is no asymmetry (which corresponds to $a=0$), we already know from \cite{fgn3} the asymptotic behavior of the density fluctuation field. More precisely:
\begin{theorem}[\cite{fgn3}]
If $a=0$, the sequence of processes $\{\mc Y_t^{\beta,\gamma,n}\;;\; t \in [0,T]\}_{n \in \bb N}$ converges in distribution with respect to the Skorokhod topology of $\mc D([0,T];\mc S'_\beta(\bb R))$, as $n \to\infty$, to the  OU process given by the solution of \eqref{ou} (in the sense of Proposition \ref{prop:sol_ou}).
\end{theorem}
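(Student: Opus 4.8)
The plan is to follow the standard martingale-problem route for equilibrium fluctuations, now specialized to the symmetric ($a=0$) dynamics with a slow bond. A key simplification is that, since there is no asymmetry, the generator acts \emph{linearly} on the occupation variables, so the density field evolves in a closed manner and the second-order Boltzmann--Gibbs principle of Theorem \ref{theo:BG} is not needed here. First I would write Dynkin's martingale decomposition: for each fixed $\vphi \in \mc S_\beta(\bb R)$,
\[
\mc M_t^n(\vphi) := \mc Y_t^{\beta,\gamma,n}(\vphi) - \mc Y_0^{\beta,\gamma,n}(\vphi) - \int_0^t n^2 \mc L_n \mc Y_s^{\beta,\gamma,n}(\vphi)\, ds
\]
is a martingale, whose quadratic variation comes from the carr\'e du champ $n^2\big[\mc L_n (\mc Y_s^{\beta,\gamma,n}(\vphi))^2 - 2\,\mc Y_s^{\beta,\gamma,n}(\vphi)\,\mc L_n \mc Y_s^{\beta,\gamma,n}(\vphi)\big]$.

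Second, a direct computation gives $\mc L_n\eta(x) = \tfrac12\Delta\eta(x)$ away from the bond and a corrected expression at $x \in \{-1,0\}$ involving the slow rate $\alpha/(2n^\beta)$. Summation by parts then lets me rewrite the drift as $\mc Y_s^{\beta,\gamma,n}(\mc A_n\vphi)$ for an explicit discrete operator $\mc A_n$, and the crux is to show that $\mc A_n\vphi \to \tfrac12\Delta_\beta\vphi$ in the sense needed to pass to the limit. The boundary term generated at the bond by the summation by parts behaves like $n^{2-\beta}$ times a discrete gradient across $\{-1,0\}$; this is exactly where the phase transition enters. For $\beta<1$ it is asymptotically negligible and the free Laplacian $\Delta$ is recovered; for $\beta=1$ it survives and forces the Robin-type matching condition \eqref{eq:bound_beta_1} defining $\mc S_\beta(\bb R)$; for $\beta>1$ it decouples the two sides and yields Neumann conditions.

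Third, I would treat the quadratic variation similarly, showing it converges to $\chi(\rho)\, t\, \|\nabla_\beta\vphi\|_{2,\beta}^2$, the boundary contribution of the carr\'e du champ at the slow bond being precisely what produces the extra term in the norm of Definition \ref{def:norm} when $\beta=1$ (and the further correction when moreover $\gamma=1$). The initial field $\mc Y_0^{\beta,\gamma,n}$ converges to a mean-zero Gaussian white noise of covariance $\chi(\rho)\int_{\bb R}\vphi\psi\,du$ by the classical central limit theorem for the independent variables $\{\eta_0(x)\}$ under $\nu_\rho$.

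Finally, tightness and characterization. By Mitoma's criterion it suffices to prove tightness of the real-valued processes $\{\mc Y_\bola^{\beta,\gamma,n}(\vphi)\}_n$ for each fixed $\vphi \in \mc S_\beta(\bb R)$; this follows from the $\mathbf L^2(\bb P_\rho)$ bounds on the drift and the quadratic variation (using stationarity) together with Aldous' criterion, as in \cite{kl}. Extracting a convergent subsequence, the estimates above show any limit point satisfies the martingale problem (i)--(ii) of Proposition \ref{prop:sol_ou}: a martingale central limit theorem identifies the limit of $\mc M_\bola^n(\vphi)$ as a continuous Gaussian martingale of variance $\chi(\rho)\, t\, \|\nabla_\beta\vphi\|_{2,\beta}^2$, while the initial-data computation fixes the law of $\mc Y_0^\beta$. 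Uniqueness of such a field, granted by Proposition \ref{prop:sol_ou}, then upgrades subsequential convergence to full convergence. The main obstacle is the second step, namely controlling $\mc A_n$ uniformly near the slow bond and proving that the boundary term converges to exactly the right boundary condition; the summation by parts must be carried out carefully on the two bonds adjacent to $\{-1,0\}$, following the analysis of \cite{fgn3}.
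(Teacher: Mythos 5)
Your proposal is correct and follows essentially the same route as the cited proof in \cite{fgn3} (and as the scheme this paper itself uses for $a\neq 0$): Dynkin's martingale decomposition, the replacement of the discrete drift by $\tfrac12\mc Y^n_s(\Delta_\beta\vphi)$ with the slow-bond boundary term dictating the Robin/Neumann conditions, convergence of the quadratic variation to $\chi(\rho)t\|\nabla_\beta\vphi\|_{2,\beta}^2$, and tightness via Mitoma plus Aldous followed by the uniqueness statement of Proposition \ref{prop:sol_ou}. Your observation that the Boltzmann--Gibbs principle is not needed when $a=0$ (since $\mc B^n_t$ carries a factor of $a$ and vanishes identically) is exactly right.
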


Here we are interested in the case $a\neq 0$.
We redefine
\begin{equation}
\mc Y^{\beta,\gamma,n}_t(\vphi) := \frac{1}{\sqrt n} \sum_{x \in \bb Z} (\eta_{tn^2}(x)-\rho) \vphi\bigg( \frac{x-n^{2-\gamma}a(1-2\rho)t}{n}\bigg)
\label{eq:field}
\end{equation}
for any $\vphi \in \mc S_\beta(\bb R)$. For the sake of clarity, from now on, we denote $\mc Y^{\beta,\gamma,n}_t$ by $\mc Y^{n}_t$, but we keep in mind that the fluctuation field (as well as its limit) strongly depends on $\beta$ and $\gamma$.
For the reader convenience, let us assume that $\rho=1/2$. We notice however that all the results hold for any $\rho$.
\begin{theorem}[Ornstein-Uhlenbeck process]\label{thm:crossover}
If one of these two conditions are satisfied:
\begin{itemize}
\item $\beta  \leq 1/2$ and $\gamma > 1/2$,
%\item $1/2 < \beta \leq 1$ and $\gamma \geq \beta$,
\item $\gamma \geq \beta > 1/2$, 
\end{itemize}then the sequence of processes $\{\mc Y^{n}_t\;;\; t \in [0,T]\}_{n \in \bb N}$ converges in distribution with respect to the Skorokhod topology of $\mc D([0,T]\;;\;\mc S'_\beta(\bb R))$, as $n\to\infty$, to the  OU process given by \eqref{ou}, in the sense of Proposition \ref{prop:sol_ou}.
\end{theorem}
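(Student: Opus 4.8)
The plan is to follow the classical route for equilibrium fluctuations: produce a martingale decomposition of $\{\mc Y^n_t(\vphi)\}$ via Dynkin's formula, prove tightness of the sequence in $\mc D([0,T],\mc S'_\beta(\bb R))$, and check that every limit point solves the martingale problem (i)--(ii) of Proposition \ref{prop:sol_ou}; uniqueness of that problem then forces convergence to the generalized OU process $\mc Y^\beta_\bola$. Throughout I use $\rho=\tfrac12$, so that the frame shift in \eqref{eq:field} is trivial and the field is simply $\mc Y^n_t(\vphi)=\tfrac1{\sqrt n}\sum_x\bar\eta_{tn^2}(x)\,\vphi(x/n)$. By Dynkin's formula,
\[
M^n_t(\vphi)=\mc Y^n_t(\vphi)-\mc Y^n_0(\vphi)-\int_0^t n^2\mc L_n\mc Y^n_s(\vphi)\,ds
\]
is a martingale, and the whole proof reduces to identifying the limits of the drift $\int_0^t n^2\mc L_n\mc Y^n_s(\vphi)\,ds$ and of the quadratic variation $\langle M^n(\vphi)\rangle_t$.

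For the drift I would first sum by parts, writing $n^2\mc L_n\mc Y^n_s(\vphi)=\tfrac{n^2}{\sqrt n}\sum_x j_{x,x+1}\,[\vphi(\tfrac{x+1}n)-\vphi(\tfrac xn)]$ in terms of the instantaneous currents $j_{x,x+1}$, and split each current into its symmetric and asymmetric parts. The symmetric part is exactly the current of the SSEP with a slow bond studied in \cite{fgn3}: a second summation by parts recasts it as $\tfrac1{\sqrt n}\sum_x\bar\eta_{sn^2}(x)\,\mathbb L^n_\beta\vphi(x/n)$, where the discrete operator $\mathbb L^n_\beta$ carries the slow conductance $\tfrac\alpha{2n^\beta}$ across $\{-1,0\}$. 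The control of the boundary contributions at sites $-1,0$ is precisely the content of \cite{fgn3}: they vanish for $\beta<1$ (leaving the free Laplacian), while for $\beta\ge1$ the boundary conditions built into $\mc S_\beta(\bb R)$---for instance the Robin relation \eqref{eq:bound_beta_1}, which cancels the otherwise divergent $O(n)$ term at the bond---turn them into the Robin ($\beta=1$) or Neumann ($\beta>1$) operator, so that $\mathbb L^n_\beta\vphi\to\Delta_\beta\vphi$ and the symmetric drift converges to $\tfrac12\int_0^t\mc Y^\beta_s(\Delta_\beta\vphi)\,ds$. The quadratic variation is computed in the same spirit and converges to $t\,\chi(\rho)\,\|\nabla_\beta\vphi\|_{2,\beta}^2$: the bulk bonds give $\chi(\rho)\int(\nabla_\beta\vphi)^2$, the asymmetric rate corrections of order $n^{-\gamma}$ drop out because $\gamma>\tfrac12$, and for $\beta=1$ the slow bond reproduces exactly the boundary enhancement of the norm $\|\cdot\|_{2,\beta}$, via the jump $\vphi(0^+)-\vphi(0^-)=\vphi'(0^+)/\alpha$ forced by \eqref{eq:bound_beta_1}.

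The genuinely new part, and the main obstacle, is the asymmetric drift. For $\rho=\tfrac12$ the asymmetric current equals $\tfrac{a}{2n^\gamma}\{\tfrac12-2\bar\eta(x)\bar\eta(x+1)\}$ (for general $\rho$ there is an additional linear term $(1-2\rho)(\bar\eta(x)+\bar\eta(x+1))$ producing the transport velocity absorbed by the frame in \eqref{eq:field}, but here it is absent). The constant $\tfrac12$ telescopes to zero against the discrete gradient of $\vphi$, so the only surviving contribution is
\[
-\int_0^t\sum_x v(x)\,\bar\eta_{sn^2}(x)\bar\eta_{sn^2}(x+1)\,ds,\qquad v(x):=\frac{a\,n^{1-\gamma}}{\sqrt n}\,\nabla^n_x\vphi.
\]
I would invoke the second-order Boltzmann--Gibbs principle (Theorem \ref{theo:BG}) to replace $\bar\eta(x)\bar\eta(x+1)$ by $(\vec{\eta}^L(x))^2-\chi(\rho)/L$. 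With this $v$ one has $\|v\|_{2,n}^2\sim a^2 n^{1-2\gamma}\|\vphi'\|_2^2$, and taking the box size $L=n^\theta$ the right-hand side of \eqref{eq:BGexpo} becomes a sum of terms of orders $n^{\theta-2\gamma}$, $n^{\beta-2\gamma}$ and $n^{2-2\theta-2\gamma}$ (up to logarithms); these vanish once $1-\gamma<\theta<2\gamma$ and $\beta<2\gamma$, the latter being guaranteed by \eqref{eq:hyp}. The term $\chi(\rho)/L$ drops out since $\sum_x\nabla^n_x\vphi=0$, and the residual average $\int_0^t\sum_x v(x)(\vec{\eta}^L(x))^2\,ds$---which is exactly what survives as the Burgers nonlinearity when $\gamma=\tfrac12$---is killed here by the stationary bound $\mathbb E_\rho[(\vec{\eta}^L(x))^2]=\chi(\rho)/L$, giving a contribution of order $n^{3/2-\gamma-\theta}$ in $\mathbf L^1(\bb P_\rho)$, hence requiring $\theta>\tfrac32-\gamma$. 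All these constraints on $\theta$ are compatible precisely when $\gamma>\tfrac12$, which is the sharp threshold: at $\gamma=\tfrac12$ the box cannot be taken large enough and the nonlinear term persists, producing instead the SBE of Theorem \ref{theo:fluct}. Thus for $\gamma>\tfrac12$ the asymmetric drift tends to $0$, in $\mathbf L^2(\bb P_\rho)$ for the Boltzmann--Gibbs error and in probability for the residual average.

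To conclude, I would obtain tightness through Mitoma's criterion, which reduces it to tightness of the real-valued processes $\{\mc Y^n_t(\vphi)\}$; this follows from the uniform $\mathbf L^2$ bounds above together with the bounded quadratic variation of $M^n(\vphi)$. Every limit point then satisfies condition (i) of Proposition \ref{prop:sol_ou} with characteristics $\Delta_\beta$ and quadratic variation $\chi(\rho)\|\nabla_\beta\vphi\|_{2,\beta}^2$, while condition (ii)---Gaussian white noise of covariance $\chi(\rho)\int\vphi\psi\,du$---comes from the central limit theorem for the i.i.d.\ variables $\bar\eta_0(x)$ under $\nu_\rho$. Uniqueness in Proposition \ref{prop:sol_ou} then identifies the limit as $\mc Y^\beta_\bola$. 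The delicate step is the control of the asymmetric drift: everything hinges on the quantitative error bound of Theorem \ref{theo:BG}, whose proof must circumvent the absent spectral gap through the multi-scale replacement described in the introduction.
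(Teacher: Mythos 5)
Your architecture is the paper's: Dynkin decomposition, symmetric part and quadratic variation as in \cite{fgn3}, the second-order Boltzmann--Gibbs principle to kill the asymmetric drift when $\gamma>\tfrac12$, then Mitoma/Aldous tightness and the martingale problem of Proposition \ref{prop:sol_ou}. For the first regime ($\beta\le\tfrac12$, $\gamma>\tfrac12$) your bookkeeping is correct and your $\mathbf L^1$ bound on the residual $\sum_x v(x)(\vec\eta^L(x))^2$ is an acceptable (slightly weaker but sufficient) variant of the paper's Cauchy--Schwarz-plus-independence $\mathbf L^2$ bound \eqref{eq:BD2}; both yield the same threshold $\gamma>\tfrac12$.

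The gap is in the second regime once $\beta\ge 1$, which is part of the hypothesis $\gamma\ge\beta>\tfrac12$. There the test functions in $\mc S_\beta(\bb R)$ generically satisfy $\vphi(0^+)\neq\vphi(0^-)$, so $\nabla_{-1}^n\vphi=n\{\vphi(0)-\vphi(-\tfrac1n)\}=O(n)$ and your weight satisfies $v(-1)=O(n^{3/2-\gamma})$; hence $\|v\|_{2,n}^2$ is dominated by the single term $\tfrac1n v^2(-1)=O(n^{2-2\gamma})$ and is \emph{not} $\sim a^2n^{1-2\gamma}\|\vphi'\|_2^2$ as you assert. Feeding this into \eqref{eq:BGexpo}, the error $\tfrac{n^\beta}{\alpha n}\|v\|_{2,n}^2=O(n^{1+\beta-2\gamma})$ does not vanish at $\beta=\gamma=1$, so applying Theorem \ref{theo:BG} to the full sum over $x\in\bb Z$ cannot close the argument there. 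The paper's remedy (Section \ref{vanishB}) is to split off the summand $x=-1$, apply the Boltzmann--Gibbs principle only to $x\neq -1$ (where \eqref{eq:bound-int} still holds), and control the remaining term $\big(\nabla_{-1}^n\vphi\big)^2\,\bb E_\rho\big[\big(\int_0^t\bar\eta_{sn^2}(-1)\bar\eta_{sn^2}(0)\,ds\big)^2\big]$ by the separate estimate $Ct\,n^{-1-\epsilon}$ of Proposition \ref{prop:estim-1}, whose proof bypasses the degenerate Dirichlet form at the slow bond by using one-block averages on either side of the bond and the independence of the two averages; the extra $n^{-\epsilon}$ is exactly what beats the $O(n^2)$ from $(\nabla_{-1}^n\vphi)^2$ when $\gamma\ge 1$. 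Your proposal contains no substitute for this step. A related slip: your claim that the $O(n^{-\gamma})$ asymmetric corrections to the quadratic variation ``drop out because $\gamma>\tfrac12$'' fails at the slow bond when $\beta=\gamma=1$, where \eqref{eq:q4} contributes the additional term $\tfrac{t\chi(\rho)a}{\alpha^2}\big(\tfrac{d\vphi}{du}(0^+)\big)^2$ built into $\|\nabla_\beta\vphi\|_{2,\beta}^2$ (Definition \ref{def:norm} and Lemma \ref{lem:quadratic-var}); again the source is the untracked $O(n)$ discrete gradient at $\{-1,0\}$.
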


%In the same spirit of \cite{fgn3}, the OU process above has a dependence in $\beta$ and $\gamma$.
In the regime described above, the asymmetry has no effect in the limit, since we recover (in particular) the  results of  \cite{fgn3}. However, for the special case $\beta=\gamma=1$,  the symmetry and the asymmetry scale with the same strength at the slow bond, and this particularity translates into a different strength for the noise. In particular, the variance given in \eqref{eq:variance} has an extra term which depends on $a$ (the asymmetry parameter), see Definition \ref{def:norm}.

On the other hand, when $\gamma=1/2$, the asymmetry is quite strong and  gives rise to the non-linear term in the SBE:
\begin{theorem}[Stochastic Burgers equation] \label{theo:fluct}
Assume  $\gamma=1/2$. For any $\beta\le 1/2$,  the sequence of processes $\{\mc Y^{n}_t\;;\; t \in [0,T]\}_{n \in \bb N}$ is tight with respect to the Skorokhod topology of   $\mc D([0,T]\;;\; \mc S'_\beta(\bb R))$ and  any limit point is a stationary energy solution of the stochastic Burgers equation \eqref{eq:sbe1}.
\end{theorem}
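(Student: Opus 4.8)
The plan is to follow the by-now standard route for energy solutions: prove tightness of the sequence $\{\mc Y^n_\bullet\}_n$ in $\mc D([0,T],\mc S'_\beta(\bb R))$ and then identify every limit point as a stationary energy solution in the sense of Definition \ref{def:energysol}. By Mitoma's criterion it is enough to work with the real-valued processes $\{\mc Y^n_t(\vphi)\}_{t\in[0,T]}$ for a fixed $\vphi\in\mc S_\beta(\bb R)$. The engine is Dynkin's formula,
\[
\mc Y^n_t(\vphi)=\mc Y^n_0(\vphi)+\int_0^t n^2\mc L_n\mc Y^n_s(\vphi)\,ds+M^n_t(\vphi),
\]
with $M^n_t(\vphi)$ a martingale; since we take $\rho=\tfrac12$ the moving frame in \eqref{eq:field} is stationary and $\vphi$ carries no time dependence. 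I would rewrite $\mc L_n\eta(x)$ as a discrete divergence of the instantaneous currents, sum by parts, and split $n^2\mc L_n\mc Y^n_s(\vphi)$ into a symmetric and an asymmetric contribution.

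The symmetric current is linear in $\eta$, so a second summation by parts turns its contribution into $\tfrac12\int_0^t\mc Y^n_s(\Delta^n\vphi)\,ds$ plus a correction supported on the slow bond $\{-1,0\}$; because $\beta\le\tfrac12<1$ this correction is of order $n^{\beta-1}$ and vanishes in $\mathbf{L}^2(\bb P_\rho)$, which is exactly why no boundary condition survives and the limit lives in the usual Schwartz space. Replacing $\Delta^n\vphi$ by $\Delta\vphi$ by smoothness gives $\tfrac12\int_0^t\mc Y_s(\Delta\vphi)\,ds$. Writing the asymmetric current in centered variables isolates, up to a telescoping constant, the quadratic functional $-a\,n^{1/2-\gamma}\sum_x(\nabla_x^n\vphi)\,\bar{\eta}_{sn^2}(x)\bar{\eta}_{sn^2}(x+1)$, whose prefactor is exactly $O(1)$ at $\gamma=\tfrac12$. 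Here I invoke the second-order Boltzmann-Gibbs principle (Theorem \ref{theo:BG}) with $v(x)=\nabla_x^n\vphi$ to replace $\bar{\eta}(x)\bar{\eta}(x+1)$ by $(\vec{\eta}^{L}(x))^2-\chi(\rho)/L$; choosing $L=\varepsilon n$ and identifying $(\vec{\eta}^{L}(x))^2$ with $n^{-1}\big(\mc Y^n_s(i_\varepsilon(x/n))\big)^2$ turns this term into $a\,\mc A^{\varepsilon}_{0,t}(\vphi)$ (up to the sign fixed by the frame), with an error bounded by \eqref{eq:BGexpo}. All the error factors $L/n=\varepsilon$, $n^\beta/(\alpha n)$, $tn/L^2=t/(\varepsilon^2 n)$ and $n^\beta(\log_2 L)^2/n$ vanish in the iterated limit $n\to\infty$ then $\varepsilon\to0$ because $\beta<1$.

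For the martingale I would compute the predictable quadratic variation $\langle M^n(\vphi)\rangle_t$ explicitly as a time integral of a local function; its $\nu_\rho$-expectation equals $t\,\chi(\rho)\|\nabla^n\vphi\|_{2,n}^2$ up to a slow-bond correction of order $n^{\beta-1}$, so $\langle M^n(\vphi)\rangle_t\to t\,\chi(\rho)\|\nabla\vphi\|_2^2$, and since each jump of $M^n(\vphi)$ is $O(n^{-1/2})$ every limit point of $M^n(\vphi)$ is a continuous martingale with this bracket. The two energy estimates of item (1) of Definition \ref{def:energysol} come from the same machinery: \eqref{eq:ec1} is a Kipnis-Varadhan bound on $\int_s^t\mc Y^n_r(\Delta^n\vphi)\,dr$, while \eqref{eq:ec2} is precisely what the second-order Boltzmann-Gibbs estimate provides, namely the $O(\varepsilon)$ control of $\mc A^{\varepsilon}_{s,t}(\vphi)-\mc A^{\delta}_{s,t}(\vphi)$. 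Both bounds are uniform in $n$ and pass to any limit point by lower semicontinuity; stationarity and the white-noise character of $\mc Y^n_t$ at each fixed $t$ are inherited from the invariance of $\nu_\rho$ and the classical central limit theorem for $\nu_\rho$. By the theorem quoted from \cite{gj2014}, \eqref{eq:ec2} then guarantees that the $\mathbf{L}^2(\bb P_\rho)$-limit $\mc A_t(\vphi)$ in \eqref{eq:processA} exists.

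Tightness of each $\{\mc Y^n_\bullet(\vphi)\}_n$ follows from the uniform $\mathbf{L}^2(\bb P_\rho)$ bounds on the drift and the martingale via Aldous' criterion, and Mitoma's criterion lifts this to tightness in $\mc D([0,T],\mc S'_\beta(\bb R))$; combined with the drift and martingale limits established above, this shows that every limit point satisfies both items of Definition \ref{def:energysol}, hence is a stationary energy solution. I expect the asymmetric term to be the main obstacle: unlike the Ornstein-Uhlenbeck regimes, at $\gamma=\tfrac12$ the quadratic current survives in the limit, so one must apply the delicate replacement of Theorem \ref{theo:BG} and, at the same time, balance the box size $L=\varepsilon n$ so that the Boltzmann-Gibbs error vanishes while $(\vec{\eta}^{L}(x))^2$ still converges to the quadratic field $\mc A_t(\vphi)$. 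A second delicate point, specific to this model, is checking that the slow bond contributes only lower-order terms to both the drift and the quadratic variation throughout the regime $\beta\le\tfrac12$, so that the limit is genuinely the boundary-condition-free stochastic Burgers equation.
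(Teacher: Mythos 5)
Your proposal is correct and follows essentially the same route as the paper: the martingale decomposition \eqref{eq:decomposition}, the second-order Boltzmann--Gibbs principle applied with $v(x)=\nabla^n_x\vphi$ and $L=\varepsilon n$ to produce the quadratic term $\mc A^{\varepsilon}_{s,t}(\vphi)$ together with the energy bound \eqref{eq:ec2}, vanishing of the slow-bond corrections for $\beta\le 1/2$, and Mitoma/Aldous for tightness. The one technical refinement the paper adds that you elide is in the tightness of $\{\mc B^{n}_t(\vphi)\}_{n}$, where it chooses the box size $L=n\sqrt{t}$ (rather than a fixed $\varepsilon n$) in the bound \eqref{eq:BGexpo} so as to obtain the Kolmogorov--Centsov modulus $K|t-s|^{3/2}$, and hence continuity of the limiting paths, which a bound of order $|t-s|$ alone would not directly give.
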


In other words, for $\beta<1$, in which case $\mc S_\beta(\bb R)$ is the classical Schwartz space $\mc S(\bb R)$, we recover two known results: \begin{itemize}
\item when we consider the process without the slow bond (i.~e.~the classical SSEP, which corresponds to $\alpha=1, \beta=0$, $a=0$): see \cite{Ravi1992};
\item when we consider the process without the slow bond but with a weak asymmetry (which corresponds to $\alpha=1$, $\beta=0$ and $\gamma \in(1/2,1]$): see \cite{gj2014}.
\end{itemize}

\section{Elements of proof}
\label{sec:comp}

The strategy of the proof consists in analysing the following martingale:
\begin{equation}
 \mc M_t^{n}(\vphi):= \mc Y_t^{n}(\vphi)-\mc Y_0^{n}(\vphi)-\int_0^t n^2 \mc L_n \big\{\mc Y_s^{n}(\vphi)\big\} ds,
 \label{eq:martin}
\end{equation}
for any $\vphi \in \mc S_\beta(\bb R)$. The fact that $\{\mc M_t^n(\vphi)\; ;\; t\geq 0\}$ is a martingale is a consequence of Dynkin's formula, see \cite[Lemma A.1.5.1]{kl}. In this section we give some properties of $\mc M_t^n$, after rewriting it in a suitable way.
\subsection{Microscopic current}
Since the dynamics conserves the total number of particles, for any $x\in\mathbb Z$ there exist instantaneous currents $j_{x,x+1}^n$ such that \[n^2 \mathcal{L}_n(\eta(x))=j_{x-1,x}^n(\eta)-j_{x,x+1}^n(\eta).\]
Straightforward computations show that the instantaneous current reads
\begin{equation*}
j_{x,x+1}^n(\eta)= j_{x,x+1}^{n,S}(\eta) + j_{x,x+1}^{n,A}(\eta)\end{equation*}
where
\begin{align*}
j_{x,x+1}^{n,A}(\eta) & = \frac{an^2}{2n^\gamma}(\eta(x+1)-\eta(x))^2,\ \qquad x \in \bb Z,\\
j_{x,x+1}^{n,S}(\eta) & = \frac{n^2}{2}(\eta(x)-\eta(x+1)), \qquad \quad x\neq -1,\\
j_{-1,0}^{n,S}(\eta) & =  \frac{\alpha n^2}{2n^\beta}(\eta(-1)-\eta(0)).
\end{align*}
For any $x \in \bb Z$, the expectation of the instantaneous current $j^n_{x,x+1}$ with respect to the measure $\nu_\rho$ is equal to $an^{2-\gamma} \chi(\rho)$. In other words, some transport behavior appears in the density fluctuations. In order to see a non-trivial evolution, we need to recenter the density fluctuation field as in \eqref{eq:field}, except if $\rho=1/2$, meaning that the transport velocity vanishes.

%We denote by $J_t^n(x)$ the cumulative current of particles between $x$ and $x+1$ up to time $tn^2$, which satisfies the continuity relation
%\[ \eta_{tn^2}(x)-\eta_0(x)=J_t^n(x-1)-J_t^n(x),\] for every $x \in \bb Z$ and $t\geq 0$.

\subsection{Martingale decomposition}

Let us introduce the operator $\bb L_n:=\bb L^A_n + \bb L^S_n$, where $\bb L_n^A$ and $\bb L_n^S$ are acting on functions $\vphi \in \mc S_\beta(\bb R)$ as 
\begin{align*}
\bb L_n^A \vphi\Big(\frac{x}{n}\Big) & := \frac{a\sqrt n}{2 n^{\gamma}} \Big\{\vphi\Big(\frac{x+1}{n}\Big) - \vphi\Big(\frac{x}{n}\Big)\Big\}\\
\bb L_n^S \vphi\Big(\frac{x}{n}\Big)&:= \zeta_{x,x+1}^n \Big\{\vphi\Big(\frac{x+1}{n}\Big) - \vphi\Big(\frac{x}{n}\Big)\Big\} +   \zeta_{x-1,x}^n \Big\{\vphi\Big(\frac{x-1}{n}\Big) - \vphi\Big(\frac{x}{n}\Big)\Big\}
\end{align*}
where \[\zeta_{x,x+1}^n:=\begin{cases} 1/2\; ; & x \neq -1 \\ \alpha/(2n^\beta)\; ; &  x=-1. \end{cases} \]
 After simple computations we can rewrite the last term of \eqref{eq:martin} as
\begin{multline*}
\int_0^t n^2 \mc L_n \big\{\mc Y_s^{n}(\vphi)\big\} ds=\int_0^t \frac{1}{\sqrt n} \sum_{x \in \bb Z} n^2 \bb L_n^S\vphi\Big(\frac{x}{n}\Big) \eta_{sn^2}(x) \; ds \\ + \int_0^t \frac{1}{\sqrt n} \sum_{x \in \bb Z} n^2 \bb L_n^A\vphi\Big(\frac{x}{n}\Big) (\eta_{sn^2}(x)-\eta_{sn^2}(x+1))^2 \; ds \end{multline*}
and by the definition of $\bb L_n^A$ and $\bb L_n^S$ we have that
\begin{equation*}
 \mc M_t^{n}(\vphi):= \mc Y_t^{n}(\vphi)-\mc Y_0^{n}(\vphi)- \mc I_t^{n}(\vphi)-\mc R_t^{n}(\vphi)-\mc B_t^{n}(\vphi),
\end{equation*}
where
\begin{align*}
\mc I_t^{n}(\vphi) &:= \frac{1}{2}\int_0^t \mc Y_s^{n}(\Delta_\beta \varphi) \;ds=\frac{1}{2}\int_0^t \frac{1}{\sqrt n}\sum_{x \in \bb Z} (\eta_{sn^2}(x)-\rho) \Delta_\beta\varphi\Big( \frac x n \Big) \; ds,\\
\mc R_t^{n}(\vphi)&:= \int_0^t \frac{1}{\sqrt n} \sum_{x \in \bb Z} n^2 \bb L_n^S\vphi\Big(\frac{x}{n}\Big) \eta_{sn^2}(x) \; ds- \mc I_t^{n}(\vphi),
\end{align*}
and since $\sum_{x\in\bb Z}\nabla_x^n\vphi=0$, the last term $\mc B_t^n(\vphi)$ can be written as
\[\mc B_t^{n}(\vphi):= \int_0^t \sum_{x \in \bb Z} \tau_xF_n(\eta_{sn^2})
\nabla_x^n\vphi\;ds,
\]
where $\tau_x$ denotes the translation operator that acts on a function $h:\Omega\to\bb R$ as $(\tau_x h)(\eta):=h(\tau_x \eta)$, and $\tau_x \eta$ is the configuration obtained from $\eta$ by shifting: $(\tau_x \eta)_y=\eta_{x+y}$ and with $F_n:\Omega \to \bb R$  defined by \begin{equation*}
F_n(\eta):= \frac{a\sqrt n}{2n^\gamma} \Big\{(\eta(1)-\eta(0))^2-2\chi(\rho)\Big\}.\end{equation*}
Let us remark that in the case $\rho=1/2$ we have
\begin{equation*}
(\eta(1)-\eta(0))^2-2\chi(\rho)=-2(\eta(1)-\rho)(\eta(0)-\rho).
\end{equation*}
Recalling the notation for $x \in \bb Z$, $\bar{\eta}(x)=\eta(x)-\rho$, we get
\[
F_n(\eta)=\frac{ -a \sqrt n}{n^\gamma} \; \bar{\eta}(1) \bar{\eta}(0).
\]

\begin{remark} Notice that the field $\mc I_t^{n}$ comes from the symmetric part of the current therefore it does not depend on $a$ nor $\gamma$; while $\mc B_t^{ n}$ comes from the asymmetric part and it depends on $a$, $\beta$ and $\gamma$ (the dependence on $\beta$ is hidden in the boundary condition of the test function $\vphi$).
\end{remark}

\subsection{Effects of the slow bond}

%such that the martingale decomposition rewrites as
%\[ \mc M_t^n(\vphi):= \mc Y_t^{n}(\vphi)-\mc Y_0^{\beta,\gamma,n}(\vphi)-\mc I_t^{n,\beta}(\vphi)-\mc B_t^{n}(\vphi)-\mc R_t^{n,\beta}(\vphi).
%\]

We first observe that for any $\beta \geq 0$, $\mc R_t^{n}(\vphi)$ is negligible in $\mb L^2(\bb P_\rho)$ for every $\vphi \in \mc S_\beta(\bb R)$. Since $\mc R_t^{n}$ does not depend on $a$ nor $\gamma$, the proof is similar to the one given in \cite{fgn3} for Proposition 3.1. We notice however that the model in \cite{fgn3} corresponds in our case to the choice $a=0$ and  \cite[Proposition 3.1]{fgn3} is a consequence of  \cite[Lemma 7.2]{fgn3}, which can be derived in the same way for our model by noticing that the Dirichlet form defined in \eqref{eq:dirichlet} is greater or equal than the Dirichlet form of the model in \cite{fgn3}. We refer the reader  to that paper, and we only state the result:
\begin{proposition}(\cite{fgn3})\label{vanishingofR}
Let us consider $t \in [0,T]$ and $\vphi \in \mc S_\beta(\bb R)$. Then,
\[ \lim_{n \to \infty} \bb E_\rho\left[ \big(\mc R_t^{n}(\vphi)\big)^2\right]=0.\]
\end{proposition}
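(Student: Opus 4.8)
The plan is to recognize $\mc R_t^n(\vphi)$ as the time integral of a mean-zero linear functional of the configuration and to control its $\mathbf L^2(\bb P_\rho)$-norm by a Kipnis--Varadhan type $H_{-1}$ estimate. First I would write $\mc R_t^n(\vphi) = \int_0^t V_n(\eta_{sn^2})\, ds$. Since $\bb L_n^S$ is in discrete divergence form one has $\sum_{x\in\bb Z}\bb L_n^S\vphi(x/n)=0$, so in the first summand defining $\mc R_t^n(\vphi)$ the occupation variable $\eta_{sn^2}(x)$ may be replaced by the centered variable $\bar\eta_{sn^2}(x)$ at no cost; grouping with $\mc I_t^n(\vphi)$ gives
\[
V_n(\eta) = \frac{1}{\sqrt n}\sum_{x\in\bb Z} c_x^n\, \bar\eta(x), \qquad c_x^n := n^2\bb L_n^S\vphi\Big(\tfrac xn\Big) - \tfrac12 \Delta_\beta\vphi\Big(\tfrac xn\Big),
\]
which is linear and has mean zero under $\nu_\rho$.

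Next I would invoke the standard variational (Kipnis--Varadhan) inequality for additive functionals of a stationary Markov process, which is exactly the content of \cite[Lemma 7.2]{fgn3}: there is $C>0$ such that
\[
\bb E_\rho\Big[\big(\mc R_t^n(\vphi)\big)^2\Big] \;\le\; C\, t\, \sup_{f}\Big\{ 2\int_\Omega V_n\, f\, d\nu_\rho - n^2\big\langle f,\, (-\mc L_n) f\big\rangle_{\nu_\rho}\Big\},
\]
the supremum being taken over local functions $f$. Although the process is not reversible, the quadratic form $\langle f,\,(-\mc L_n)f\rangle_{\nu_\rho}$ only sees the symmetric part of $\mc L_n$, so the bound is legitimate. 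The whole problem therefore reduces to showing that this discrete $H_{-1}$ norm of $V_n$ vanishes as $n\to\infty$.

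Finally, the decisive observation is that neither $V_n$ nor the quadratic form above depends on $a$ or $\gamma$: the functional $V_n$ is the one of the $a=0$ model, and the symmetric part of $\mc L_n$ has jump rates exactly $\zeta_{x,x+1}^n$ (the antisymmetric drift of strength $a/(2n^\gamma)$ contributes nothing to $\langle f,\,(-\mc L_n)f\rangle_{\nu_\rho}$). Hence the Dirichlet form of our model dominates --- in fact coincides with --- the one of \cite{fgn3}; subtracting a larger penalization can only decrease the supremum, so $\bb E_\rho[(\mc R_t^n(\vphi))^2]$ is bounded by the corresponding quantity for the model of \cite{fgn3}, which was proved to vanish in \cite[Proposition 3.1]{fgn3}. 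I expect the genuine difficulty --- already carried out there, hence only quoted here --- to be precisely the $H_{-1}$ estimate itself: away from the slow bond $c_x^n$ is merely the $O(n^{-2})$ Taylor error between $\tfrac12\Delta_x^n\vphi$ and $\tfrac12\Delta\vphi(x/n)$, but near the bond $\{-1,0\}$ the operator $n^2\bb L_n^S$ no longer approximates $\tfrac12\Delta_\beta$ pointwise, and controlling the resulting boundary terms requires both the boundary conditions built into $\mc S_\beta(\bb R)$ (Definition \ref{def23}) and a careful choice of the test function $f$ in the variational formula.
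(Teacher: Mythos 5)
Your proposal is correct and follows essentially the same route as the paper: both reduce the claim to \cite[Proposition 3.1]{fgn3} via the Kipnis--Varadhan variational ($H_{-1}$) estimate of \cite[Lemma 7.2]{fgn3}, using that the functional defining $\mc R_t^{n}(\vphi)$ does not depend on $a$ or $\gamma$ and that the Dirichlet form \eqref{eq:dirichlet} dominates the one of the $a=0$ model, so the bound of \cite{fgn3} carries over. The only cosmetic difference is that the paper states the Dirichlet form is \emph{greater than or equal to} that of \cite{fgn3} (its coefficients are $\tfrac12+\tfrac{a}{2n^\gamma}$ rather than $\tfrac12$), whereas you say it coincides; since domination is all that is needed, this does not affect the argument.
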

To sum up, we can rewrite the martingale, for $n$ sufficiently large,  as
\begin{equation}
\mc M_t^{n}(\vphi):= \mc Y_t^{n}(\vphi)-\mc Y_0^{n}(\vphi)-\frac{1}{2}\int_0^t \mc Y_s^{n}(\Delta_\beta \vphi) \; ds - \mc B_t^{n}(\vphi) +o(1) ,\label{eq:decomposition}
\end{equation}
where $o(1)$ vanishes in $\bf L^2(\bb P_\rho)$, as $n\to\infty$. 
There exists a range of $(\gamma,\beta)$ for which the contribution of the antisymmetric part of the current is negligible, as this can be seen through the following proposition:

\begin{proposition}\label{vanishingofB}
If one of these two conditions are satisfied:
\begin{itemize}
\item $\beta  \leq 1/2$ and $\gamma > 1/2$,
%\item $1/2 < \beta \leq 1$ and $\gamma \geq \beta$,
\item $\gamma \geq \beta > 1/2$,
\end{itemize}  we have, for any $t \in [0,T]$ and for any $\vphi \in \mc S_\beta(\bb R)$,
\[ \lim_{n \to \infty} \bb E_\rho\left[ \big(\mc B_t^{n}(\vphi)\big)^2\right]=0.\]
\end{proposition}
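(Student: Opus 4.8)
The plan is to bound $\mathbb{E}_\rho[(\mc B_t^n(\vphi))^2]$ directly using the second-order Boltzmann-Gibbs principle (Theorem \ref{theo:BG}). Recall that, for $\rho=1/2$,
\[
\mc B_t^n(\vphi) = \int_0^t \sum_{x\in\bb Z} \tau_x F_n(\eta_{sn^2})\, \nabla_x^n\vphi\; ds
= -\frac{a\sqrt n}{n^\gamma} \int_0^t \sum_{x\in\bb Z} \bar\eta_{sn^2}(x+1)\bar\eta_{sn^2}(x)\, \nabla_x^n\vphi\; ds,
\]
so that $\mc B_t^n(\vphi)$ is precisely the space-time integral of the quadratic functional $\bar\eta(x)\bar\eta(x+1)$ against the weight $v(x) = -\frac{a\sqrt n}{n^\gamma}\nabla_x^n\vphi$. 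First I would apply Theorem \ref{theo:BG} with this choice of $v$: it replaces $\bar\eta(x)\bar\eta(x+1)$ by $(\vec\eta^L(x))^2 - \chi(\rho)/L$ up to an error controlled by the right-hand side of \eqref{eq:BGexpo}. Thus
\[
\mathbb{E}_\rho\big[(\mc B_t^n(\vphi))^2\big] \le 2\, \mathbb{E}_\rho\Big[\Big(\int_0^t \sum_{x}v(x)\big\{(\vec\eta^L_{sn^2}(x))^2 - \tfrac{\chi(\rho)}{L}\big\}\,ds\Big)^2\Big] + C\, (\text{error terms in } \eqref{eq:BGexpo}).
\]

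Next I would estimate both pieces. For the error terms, note that $\|v\|_{2,n}^2 = \frac{a^2 n}{n^{2\gamma}}\cdot\frac{1}{n}\sum_x (\nabla_x^n\vphi)^2 \le C a^2 n^{1-2\gamma}\|\nabla\vphi\|_\infty$-type bound, which carries a favorable factor $n^{1-2\gamma}$; since $\gamma>1/2$ this prefactor decays. Multiplying by the bracket $\{\frac{L}{n}+\frac{n^\beta}{\alpha n}+\frac{tn}{L^2}\}$ and by the analogous second term with $(\log_2 L)^2$, one must choose the scale $L=L(n)$ to make the total vanish. The main quadratic term is handled separately: using stationarity and the Cauchy-Schwarz inequality in time,
\[
\mathbb{E}_\rho\Big[\Big(\int_0^t \sum_x v(x)\big(\vec\eta^L_{sn^2}(x)\big)^2\,ds\Big)^2\Big] \le t^2 \sum_{x,y} |v(x)||v(y)|\, \mathbb{E}_\rho\big[(\vec\eta^L(x))^2(\vec\eta^L(y))^2\big],
\]
and the fourth-moment of box averages of centered Bernoulli variables is of order $L^{-2}$ for disjoint boxes and $O(L^{-1})$ otherwise; combined with $\sum_x|v(x)| \sim \frac{a\sqrt n}{n^\gamma}\cdot n\cdot\|\nabla\vphi\|_{1}$-scale this again produces a decaying power of $n$ after optimizing $L$.

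The main obstacle is the optimization of the intermediate scale $L$: the error bound in \eqref{eq:BGexpo} contains competing terms $\frac{L}{n}$ (favoring small $L$) and $\frac{tn}{L^2}$ (favoring large $L$), together with the slow-bond contribution $\frac{n^\beta}{\alpha n}$, which is the term that encodes the condition $\beta\le 1/2$ or $\beta>1/2$. I expect the condition $\gamma>1/2$ to be exactly what makes the leading prefactor $a^2 n^{1-2\gamma}$ decay, while the restriction on $\beta$ (namely $\beta\le1/2$, or $\gamma\ge\beta>1/2$) ensures that the slow-bond terms $\frac{n^\beta}{\alpha n}$ and $\frac{n^\beta(\log_2 L)^2}{\alpha n}$ are dominated after the optimal choice (typically $L\sim n^{2/3}$ or similar). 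I would treat the two parameter regimes of the proposition separately, verifying in each case that a single power-law choice of $L$ simultaneously kills all four contributions; the boundary case $\gamma=\beta$ and the logarithmic correction require the most careful bookkeeping, but no new idea beyond plugging in and optimizing.
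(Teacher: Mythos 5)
Your strategy coincides with the paper's for the bulk of the sum, and in the regime $\beta<1$ (where $\mc S_\beta(\bb R)=\mc S(\bb R)$) your argument goes through essentially as written: the paper also applies Theorem \ref{theo:BG} with $v(x)\propto\nabla_x^n\vphi$, bounds the residual quadratic field by independence and Cauchy--Schwarz, and takes $L=cn$. But there is a genuine gap in the case $\gamma\ge\beta\ge1$, which is contained in your second bullet. For $\beta\ge1$ the test functions in $\mc S_\beta(\bb R)$ are \emph{discontinuous at the origin}, so $\nabla_{-1}^n\vphi=n\{\vphi(0/n)-\vphi(-1/n)\}$ converges to $n$ times the jump $\vphi(0^+)-\vphi(0^-)$ and is of order $n$, not of order $1$. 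Consequently your bound $\|v\|_{2,n}^2\le Ca^2n^{1-2\gamma}$ is false there: the single site $x=-1$ contributes $\frac{a^2n}{n^{2\gamma}}\cdot\frac{1}{n}(\nabla_{-1}^n\vphi)^2\sim a^2n^{2-2\gamma}$. Feeding this into \eqref{eq:BGexpo}, the slow-bond error $\frac{n^\beta}{\alpha n}\|v\|_{2,n}^2\sim\frac{a^2}{\alpha}\,n^{1+\beta-2\gamma}$ does not vanish when $\beta=\gamma=1$ (it is the constant $a^2/\alpha$), and no choice of $L$ can repair this since $L$ does not enter that term; the moment bound for the quadratic piece suffers the same contamination. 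So the proposal, as written, only proves the proposition for $\beta<1$.

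The missing idea is to remove the site $x=-1$ from the sum \emph{before} invoking Theorem \ref{theo:BG}, i.e.\ to apply it with $v(-1)=0$ so that $\|v\|_{2,n}^2$ keeps the favourable factor $n^{1-2\gamma}$, and then to estimate the isolated summand
\[
\frac{a^2n}{n^{2\gamma}}\big(\nabla_{-1}^n\vphi\big)^2\,\bb E_\rho\bigg[\Big(\int_0^t\bar\eta_{sn^2}(-1)\bar\eta_{sn^2}(0)\,ds\Big)^2\bigg]
\]
separately. A crude bound of order $t/n$ on this expectation (a one-block-type estimate) is not enough: multiplied by $(\nabla_{-1}^n\vphi)^2\sim n^2$ it yields $ta^2n^{2-2\gamma}$, which is $O(1)$ at $\gamma=1$. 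The paper's Proposition \ref{prop:estim-1} supplies the sharper bound $Ct\,n^{-1-\epsilon}$, obtained by decomposing $\bar\eta(-1)\bar\eta(0)$ through two empirical averages on either side of the slow bond, and the extra factor $n^{-\epsilon}$ is exactly what makes this contribution vanish for $\gamma\ge1$. You would need to add this step (or an equivalent one) to cover the second parameter regime, in particular the point $\beta=\gamma=1$.
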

The proof of Proposition \ref{vanishingofB}  is given in Section \ref{vanishB}.

\subsection{Quadratic variation}
The quadratic variation of the martingale $\mc M_t^{n}(\vphi)$ is
\begin{equation}
\label{eq:quadratic}
\langle \mc M^{n}(\vphi) \rangle_t= \int_0^t \frac{1}{2n} \sum_{x\neq -1} \tau_x G_n(\eta_{sn^2}) \big(\nabla_x^n \vphi\big)^2 \;ds + \int_0^t \frac{1}{2n} H_n(\eta_{sn^2})  \big(\nabla_{-1}^n \vphi\big)^2\; ds  ,
\end{equation}
where $G_n, H_n:\Omega \to \bb R$ are defined by
\begin{align*}
G_n(\eta) & := (\eta(1)-\eta(0))^2+\frac{a}{n^\gamma} \eta(1)(1-\eta(0)),\\
H_n(\eta)& :=  \frac{\alpha}{n^\beta}(\eta(-1)-\eta(0))^2+\frac{a}{n^\gamma} \eta(0)(1-\eta(-1)).
\end{align*}

\begin{lemma}\label{lem:quadratic-var}
For any $\gamma\geq\beta\geq 0$, and $\vphi \in \mc S_\beta(\bb R)$, the quadratic variation $\langle \mc M^{n}(\vphi) \rangle_t$ converges in $\mathbf L^2(\bb P_\rho)$, as $n \to\infty$, towards $t\chi(\rho) \|\nabla_\beta\vphi\|_{2,\beta}^2$.
\end{lemma}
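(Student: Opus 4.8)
The plan is to show that the quadratic variation $\langle \mc M^{n}(\vphi)\rangle_t$ in \eqref{eq:quadratic} converges to its deterministic limit by replacing the time-integrated local functions $\tau_x G_n$ and $H_n$ by their averages under the invariant measure $\nu_\rho$. First I would note that, since the process starts from the stationary measure $\nu_\rho$, for each fixed $s$ one has $\mathbb E_\rho[\tau_x G_n(\eta_{sn^2})]=\mathbb E_\rho[G_n]$ and $\mathbb E_\rho[H_n(\eta_{sn^2})]=\mathbb E_\rho[H_n]$, and a direct computation over the Bernoulli measure gives
\[
\mathbb E_\rho[G_n]=2\chi(\rho)+\frac{a}{n^\gamma}\rho(1-\rho)=2\chi(\rho)+\frac{a}{n^\gamma}\chi(\rho),\qquad
\mathbb E_\rho[H_n]=\frac{2\alpha}{n^\beta}\chi(\rho)+\frac{a}{n^\gamma}\chi(\rho).
\]
Plugging these expected values into \eqref{eq:quadratic} and using the Riemann-sum convergences $\frac1n\sum_{x}(\nabla_x^n\vphi)^2\to \|\nabla_\beta\vphi\|_2^2$ (with the boundary term near $x=-1$ contributing the extra $\mathbf 1_{\beta=1}$ correction in $\|\cdot\|_{2,\beta}$), I would show that the constant-coefficient replacement produces exactly $t\chi(\rho)\|\nabla_\beta\vphi\|_{2,\beta}^2$; the terms carrying the factor $a/n^\gamma$ in $G_n$ and $H_n$ vanish as $n\to\infty$ for $\gamma\ge\beta\ge 0$, except precisely in the critical case $\beta=\gamma=1$, where the $a/n^\gamma$ contribution from $H_n$ survives against the $(\nabla_{-1}^n\vphi)^2\sim n^2$ scaling at the slow bond and matches the extra $a\varphi^2(0)/\alpha^2$ term inside Definition \ref{def:norm}. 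This bookkeeping, while a little delicate at the slow bond, is exactly the mechanism that explains the appearance of the $a$-dependent norm.

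The substance of the argument is controlling the fluctuation of the time integral around its mean, i.e.\ showing
\[
\mathbb E_\rho\bigg[\Big(\int_0^t \tfrac1{2n}\sum_{x\neq -1}\big(\tau_x G_n(\eta_{sn^2})-\mathbb E_\rho[G_n]\big)(\nabla_x^n\vphi)^2\,ds + (\text{slow-bond term})\Big)^2\bigg]\longrightarrow 0.
\]
The standard tool here is the Kipnis--Varadhan inequality: the $\mathbf L^2(\bb P_\rho)$-norm of a time integral $\int_0^t \tau_x V(\eta_{sn^2})\,ds$ of a mean-zero local function is bounded by $Ct$ times a variational (negative Sobolev) norm $\|V\|_{-1,n}^2$, computed against the Dirichlet form. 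I would expand the square, which splits into diagonal terms (one box) and off-diagonal terms involving two distinct sites $x,y$; the diagonal terms are the dominant ones and are handled site by site, while the off-diagonal terms require care because the translates $\tau_x G_n$ overlap only for $|x-y|\le 1$, so only nearest-neighbour cross terms are nonzero. Since $G_n$ and $H_n$ are bounded and their centered versions $\tau_x G_n-\mathbb E_\rho[G_n]$ are genuinely local gradient-type functions, the variational norm of each is $O(1/n)$ per site after accounting for the $(\nabla_x^n\vphi)^2$ weights and the $1/(2n)$ prefactor, yielding an overall bound that tends to zero.

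The main obstacle, as the paper itself emphasizes in the introduction, is that the rates at the slow bond are not bounded below and no spectral gap estimate is available, so one cannot naively invoke the usual equivalence-of-ensembles or spectral-gap bounds uniformly in $n$. For the quadratic variation, however, the functions $G_n$ and $H_n$ are much simpler than the current functional treated in Theorem \ref{theo:BG}: they are already on the scale of single bonds and do not need the full multi-scale replacement. The careful point is the slow-bond contribution $\frac1{2n}H_n(\eta_{sn^2})(\nabla_{-1}^n\vphi)^2$: here $(\nabla_{-1}^n\vphi)^2$ is of order $n^2$ while $H_n$ carries the small factors $\alpha/n^\beta$ and $a/n^\gamma$, so I would track these scalings explicitly to confirm that (i) for $\beta<1$ the entire slow-bond term vanishes, (ii) for $\beta=1$ it contributes the Robin boundary term $\varphi^2(0)/\alpha$ (and, when additionally $\gamma=1$, the $a\varphi^2(0)/\alpha^2$ correction), and (iii) for $\beta>1$ it contributes nothing, in agreement with the Neumann case. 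These three regimes are precisely what is encoded in $\|\nabla_\beta\vphi\|_{2,\beta}^2$, so once the constant-coefficient replacement and the vanishing of the fluctuation are established, the stated limit follows.
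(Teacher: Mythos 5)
Your first and third paragraphs reproduce exactly what the paper's proof of Lemma \ref{lem:quadratic-var} does: it writes $\bb E_\rho[\langle \mc M^{n}(\vphi)\rangle_t]$ as the four terms \eqref{eq:q1}--\eqref{eq:q4}, uses $\bb E_\rho[(\eta(1)-\eta(0))^2]=2\chi(\rho)$ and $\bb E_\rho[\eta(1)(1-\eta(0))]=\chi(\rho)$, and tracks the scalings $n^{1-\beta}$ and $n^{1-\gamma}$ against $(\nabla_{-1}^n\vphi)^2$ regime by regime; your bookkeeping at the slow bond (Robin term for $\beta=1$, extra $a$-term for $\beta=\gamma=1$, nothing for $\beta>1$) matches. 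One small correction of phrasing: the $\mathbf 1_{\beta=1}$ contribution to $\|\nabla_\beta\vphi\|_{2,\beta}^2$ does not arise from the Riemann sum $\frac1n\sum_{x\neq-1}(\nabla_x^n\vphi)^2$ --- that sum avoids the discontinuity and converges to the plain $\mathbf L^2$ norm for every $\beta$ --- it comes entirely from the separate slow-bond terms \eqref{eq:q3}--\eqref{eq:q4}, as you in fact describe correctly in your last paragraph.

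Your middle paragraph goes beyond the paper, which only computes the expectation and never addresses the variance despite the lemma asserting $\mathbf L^2$ convergence; adding that step is the right instinct. But the justification you give for it has a gap: the centered function $\tau_xG_n-\bb E_\rho[G_n]=-2\bar\eta(x)\bar\eta(x+1)+(1-2\rho)(\bar\eta(x)+\bar\eta(x+1))$ is \emph{not} a gradient-type local function with $H_{-1}$ norm $O(1/n)$ per site; controlling $\|\sum_xv(x)\bar\eta(x)\bar\eta(x+1)\|_{-1}$ is precisely the hard content of Theorem \ref{theo:BG} and requires the multiscale analysis of Section \ref{sec:BG}, so you cannot invoke such a bound here. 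What saves the bulk term is the prefactor $\tfrac1{2n}$ (compare with the $\sqrt n/n^\gamma$ in $\mc B^n_t$): by stationarity and Cauchy--Schwarz in time, $\bb E_\rho[(\int_0^tX_s\,ds)^2]\le t^2\,\bb E_\rho[X_0^2]$, and the finite-range correlations you observe give $\bb E_\rho[X_0^2]\le Cn^{-2}\sum_x(\nabla_x^n\vphi)^4=O(1/n)$, so no $H_{-1}$ estimate is needed there at all. The slow-bond term is the genuinely delicate one for $\beta=1$: there $\tfrac1{2n}(\nabla_{-1}^n\vphi)^2\sim Cn$ while $\mathrm{Var}(H_n)\sim n^{-2\beta}$, so static Cauchy--Schwarz only yields an $O(t^2)$ bound that does not vanish, and one must use a dynamical estimate --- essentially Proposition \ref{prop:estim-1} for the $\bar\eta(-1)\bar\eta(0)$ part of $H_n$ and a one-block-type estimate for the linear parts --- to conclude. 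Neither your proposal nor the paper spells this out, but your plan is reparable along these lines while remaining consistent with the paper's stated limit.
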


\begin{proof}
According to \eqref{eq:quadratic}, we write  $\bb E_\rho\big[\langle \mc M^{n}(\vphi) \rangle_t\big]$ as the sum below of  four terms:
\begin{align}
%\bb E_\rho &\big[\langle \mc M^{n}(\vphi) \rangle_t\big] \notag
%\\ =
& \int_0^t \frac{1}{2n} \sum_{x \neq -1} \bb E_\rho\Big[ (\eta_{sn^2}(x+1)-\eta_{sn^2}(x))^2 \Big]  \big(\nabla_x^n \vphi\big)^2 ds \label{eq:q1}\\
& + \int_0^t \frac{a}{2n^{1+\gamma}}  \sum_{x \neq -1} \bb E_\rho\Big[\eta_{sn^2}(x+1) (1-\eta_{sn^2}(x)) \Big] \big(\nabla_x^n \vphi\big)^2 ds \label{eq:q2}\\
& + \int_0^t \frac{\alpha}{2n^{1+\beta}}\bb E_\rho\Big[ (\eta_{sn^2}(0)-\eta_{sn^2}(-1))^2 \Big] n^2 \Big\{ \vphi\Big(\frac{0}{n}\Big)-\vphi\Big(\frac{-1}{n}\Big)\Big\}^2 ds \label{eq:q3}\\
& + \int_0^t \frac{a}{2n^{1+\gamma}}\bb E_\rho\Big[ \eta_{sn^2}(0)(1-\eta_{sn^2}(-1)) \Big] n^2 \Big\{ \vphi\Big(\frac{0}{n}\Big)-\vphi\Big(\frac{-1}{n}\Big)\Big\}^2 ds. \label{eq:q4}
\end{align}
First, we notice that the limits of \eqref{eq:q1} and \eqref{eq:q2} do not depend on $\beta$, since the sum avoids the slow bond, and for any regime of $\beta$ the function $\varphi$ which is involved in the sum is smooth. More precisely:
\begin{itemize}
\item \eqref{eq:q1} converges to $ t \chi(\rho) \|\nabla_\beta\vphi\|_2^2$ as $n\to\infty$;
\item if $\gamma> 0$, \eqref{eq:q2} converges to $0$ as $n\to\infty$.
\end{itemize}
Now we divide the proof into three cases depending on the range of $\beta$.
\begin{enumerate}
\item \textbf{Case $\beta < 1$.}
In this case, the test function $\vphi$ belongs to $\mc S(\bb R)$. Therefore, \begin{itemize}
\item \eqref{eq:q3} is of order $O_\vphi(n^{-(1+\beta)})$ and  \eqref{eq:q4} is of order $O_\vphi(n^{-(1+\gamma)})$, so they both vanish as $n\to\infty$.
\end{itemize}

\item \textbf{Case $\beta = 1$.}
In this case, $\varphi$ satisfies the boundary condition \eqref{eq:bound_beta_1}. Then one can easily see that
\begin{itemize}
\item \eqref{eq:q3} converges as $n\to\infty$ to
\[ t\chi(\rho) \alpha \big(\vphi(0^+)-\vphi(0^-)\big)^2 = \frac{t\chi(\rho)}{\alpha} \Big( \frac{d\vphi}{du}(0^+)\Big)^2.\]
\item if $\gamma >1$, \eqref{eq:q4} vanishes as $n\to\infty$;
\item if $\gamma=1$, \eqref{eq:q4} converges as $n\to\infty$ to
\[ t\chi(\rho) a \big(\vphi(0^+)-\vphi(0^-)\big)^2 = \frac{t\chi(\rho)a}{\alpha^2} \Big( \frac{d\vphi}{du}(0^+)\Big)^2.\]
\end{itemize}

\item \textbf{Case $\beta > 1$.}
In this case, for any $\gamma \ge \beta$, \eqref{eq:q3} and \eqref{eq:q4} vanish as $n\to\infty$.
\end{enumerate}
\end{proof}

\section{Proof of Theorem \ref{thm:crossover} and Theorem \ref{theo:fluct} }
\label{sec:proof}

The main ingredient for proving both theorems is the second order Boltzmann-Gibbs principle stated in Theorem \ref{theo:BG}, whose proof is postponed to Section \ref{sec:BG}, which gives rise to the SBE (Theorem \ref{theo:fluct}). 

We start by showing the zero contribution of $\mc B_t^n$ in some particular range of $(\beta,\gamma)$.In what follows, we will write $\varepsilon n$, resp. $cn$, for $\lfloor \varepsilon n\rfloor$, resp. $\lfloor cn\rfloor$, its integer part.

\subsection{Proof of Proposition \ref{vanishingofB}}\label{vanishB}
Recall that $ \mc B_t^{n}(\vphi)$ can be written as
\begin{equation*}
\mc B_t^{n}(\vphi)=-a\frac{\sqrt n}{n^\gamma}\int_0^t \sum_{x \in\bb Z} \bar\eta_{sn^2}(x+1) \bar\eta_{sn^2}(x)\nabla_x^n\vphi \;ds,
\end{equation*}
where, for $x\in\mathbb{Z}$, $\bar{\eta}(x)=\eta(x)-\rho$.
 
From the convexity inequality $(x+y+z)^2\le 3(x^2+y^2+z^2)$, we can bound the 
$\bf L^2(\bb {P}_{\rho})$-norm of $\mc B_t^{n}(\vphi)$  by 
\begin{align}
& 3\frac{a^2\; n}{n^{2\gamma}}\;\mathbb{E}_{\rho} \bigg[ \Big(  \int_0^t\sum_{x \neq -1} \nabla_x^n\varphi\Big\{\bar\eta_{sn^2}(x+1) \bar\eta_{sn^2}(x)-\big(\vec\eta_{sn^2}^L(x)\big)^2+\frac{\chi(\rho)}{L}\Big\}\, ds\Big)^2\bigg]\label{eq:newterm1}\\
&+ 3 \frac{a^2\; n}{n^{2\gamma}}\;\mathbb{E}_{\rho} \bigg[ \Big(  \int_0^t\sum_{x \neq -1} \nabla_x^n\varphi \Big\{\big(\vec\eta_{sn^2}^L(x)\big)^2-\frac{\chi(\rho)}{L}\Big\}\, ds\Big)^2\bigg]\label{eq:newterm2}
\\&+3\frac{a^2\; n}{n^{2\gamma}}(\nabla_{-1}^n \varphi)^2\;\mathbb{E}_{\rho} \bigg[ \Big( \int_0^t   \bar\eta_{sn^2}(-1) \bar\eta_{sn^2}(0) \; ds \Big)^2\bigg]. \label{eq:newterm3}
\end{align}
By Theorem \ref{theo:BG} the first term \eqref{eq:newterm1} is bounded by
\begin{equation}Ct\;\frac{a^2\;n}{n^{2\gamma}}\;\Big\{\frac{L}{n} +\frac{n^\beta}{\alpha n}+\frac{tn}{L^2}+\frac{n^\beta(\log_2(L))^2}{\alpha n}\Big\} \times \frac{1}{n}\sum_{x\neq -1}(\nabla_x^n\varphi)^2.\label{eq:BD1}\end{equation}
By independence and by the Cauchy-Schwarz inequality the second term \eqref{eq:newterm2} is bounded by
\begin{multline}
C t^2  \; \frac{a^2\;n}{n^{2\gamma}}\; \Big\{ L\sum_{x\neq -1}(\nabla_x^n\varphi)^2\Big\}\int \Big(\big(\vec{\eta}^L(0)\big)^2-
\frac{\chi(\rho)}{L}\Big)^2 \nu_\rho(d\eta)\\ \le Ct^2\; \frac{a^2\;n}{n^{2\gamma}}\times \frac{n}{L}\; \Big\{ \frac{1}{n}\sum_{x\neq -1}(\nabla_x^n\varphi)^2\Big\}. \label{eq:BD2}
\end{multline}
The last term \eqref{eq:newterm3} is estimated by using Proposition \ref{prop:estim-1}, which is stated and proved in Section \ref{sec:auxi}. This term is the most delicate one because it depends on the continuity of the test function at $0$: indeed, when $\varphi\in\mc S_\beta(\bb R)$, the quantity $\nabla_{-1}^n\varphi$   can be of order  $1$ if $\beta<1$, or  of order $n$ if $\beta\geq 1$.
 For that purpose we  distinguish now two cases:\begin{enumerate}
\item If $\beta < 1$ then the test function $\varphi$ is in the Schwartz space $\mc S(\bb R)$, therefore
\[
(\nabla_{-1}^n\varphi)^2 \xrightarrow[n\to\infty]{} \Big\{\frac{d\varphi}{du}(0)\Big\}^2,
\]
and 
\begin{equation}\label{eq:bound-int}
\sup_{n\geq 0} \bigg\{ \frac{1}{n} \sum_{x\neq -1} \big(\nabla_x^n\varphi\big)^2\bigg\} < +\infty,
\end{equation}
hence,  from Proposition \ref{prop:estim-1} and the bounds \eqref{eq:BD1} and \eqref{eq:BD2} above, we have: for any $L\in\mathbb N $ and $\epsilon>0$, $\mathbb{E}_{\rho} [ (\mc B_t^{n}(\vphi))^2]$ can be bounded by 
\begin{equation}\label{eq:boundB}
C(t,a,\varphi)\frac{n}{n^{2\gamma}}\;\Big\{\frac{L}{n} +\frac{n^\beta}{\alpha n}+\frac{tn}{L^2}+\frac{n^\beta(\log_2(L))^2}{\alpha n}+\frac{tn}{L} +\frac{1}{n^{1+\epsilon}}
\Big\}\end{equation}
which vanishes, as $n\to \infty$, for any $\gamma > 1/2$, after taking $L=cn$, with $c>0$ being a positive constant.
\item If $\beta \geq 1$, then the test function $\varphi$ has a discontinuity at $0$, and we have
\[
  \Big\{\varphi\Big(\frac{-1}{n}\Big)-\varphi\Big(\frac{0}{n}\Big)\Big\}^2 \xrightarrow[n\to\infty]{} \big\{\varphi(0^+)-\varphi(0^-)\big\}^2.
\]
Notice  however that \eqref{eq:bound-int} remains valid,  and we   deduce that, for any $L\in\mathbb N $ and $\epsilon>0$,  $\mathbb{E}_{\rho} [ (\mc B_t^{n}(\vphi))^2]$ can be bounded by 
\begin{equation} \label{eq:errorfinal}
C(t,a,\varphi)\frac{n}{n^{2\gamma}}\; \Big\{\frac{L}{n} +\frac{n^\beta}{\alpha n}+\frac{tn}{L^2}+\frac{n^\beta(\log_2(L))^2}{\alpha n}+\frac{tn}{L} +n^{1-\epsilon}\Big\}.\end{equation}
Notice that the term $n^{1-\varepsilon}$  comes from Proposition \ref{prop:estim-1} plus the fact that $(\nabla_{-1}^n\varphi)^2$ is of order $n^2.$
If one lets $n\to \infty$, and then $L\to\infty$, then one can see that \eqref{eq:errorfinal} vanishes for every $\gamma \geq \beta\geq 1$, which concludes the proof of Proposition \ref{vanishingofB}.
\end{enumerate}

 Let us now follow the lines of \cite{gj2014}: first, for any value of $\beta$ we prove that the sequence of processes $\{\mc Y_t^{n}\; ;\; t \in [0,T]\}_{n \in \bb N}$ is tight with respect to the Skorokhod topology of $\mc D([0,T],\mc S'_\beta(\bb R))$. Then, we prove that \begin{itemize}
\item if $\beta \le 1/2$ and $\gamma=1/2$, then any limit point of that sequence is an energy solution of the SBE \eqref{eq:sbe1};
\item in the other cases (whenever \eqref{eq:hyp} is satisfied), any limit point of that sequence solves the martingale problem given in Proposition \ref{prop:sol_ou}.
\end{itemize}

\subsection{Tightness of the density field}

We prove tightness of the sequence of processes $\{\mc Y_t^{n}\; ;\; t \in [0,T]\}_{n \in \bb N}$.
The proof relies on three well-known criteria. The first one is due to Mitoma \cite{Mitoma} and reduces the proof of tightness for distribution-valued processes to the proof of tightness for real-valued processes. We notice that we can use Mitoma's criterion, since for all the regimes of $\beta$, the space $\mc S_\beta(\bb R)$ is a Fr\'echet space (see \cite{fgn3}).  The second criterion is due to Aldous (see \cite{gj2014,kl}  for example) and allows us to work with the Skorokhod topology, whereas the last one, due to Prohorov, Kolmogorov and Centsov \cite{Karatzas}, treats the case of processes with continuous paths. The proofs are similar to those of \cite{gj2014}. We recall here the main steps for the sake of completeness, and underline the new role played by the slow bond, which was already done in \cite{fgn3}.

From Mitoma's criterion,  the sequence $\{ \mc Y_t^{n} \;; \;t \in [0,T]\}_{n \in \bb N}$ is tight if we prove tightness for the sequence of real-valued processes $\{ \mc Y_t^{n}(\vphi) \;; \;t \in [0,T]\}_{n \in \bb N}$ for every $\vphi \in \mc S_\beta(\bb R)$. In view of the decomposition \eqref{eq:decomposition}, we are reduced to prove the tightness of the four sequences: \[\begin{array}{ll} \{ \mc Y_0^{n}(\vphi)\}_{n \in \bb N}, & \quad \{ \mc M_t^{n}(\vphi)\; ;\; t \in [0,T]\}_{n \in \bb N}, \\
~\\
\{ \mc I_t^{n}(\vphi)+\mc R_t^{n}(\vphi) \;;\; t \in [0,T]\}_{n \in \bb N},& \quad    \{ \mc B_t^{n}(\vphi) \;;\; t \in [0,T]\}_{n \in \bb N}.\end{array}\]

\subsection{Tightness for $\{ \mc Y_0^{n}(\vphi)\}_{n \in \bb N}$ } This is the simplest: after computing the characteristic function of $\mc Y_0^{n}(\vphi)$, one can easily check that $\mc Y_0^{n}$ converges in distribution to a Gaussian field $\mc Y_0$ with zero mean and covariances given on $\vphi, \psi \in \mc S_\beta(\bb R)$ by
\[
\bb E_\rho\big[\mc Y_0(\vphi) \mc Y_0(\psi)\big]=\chi(\rho) \int_{\bb R} \vphi(u)\psi(u) du.
\]

\subsection{Tightness for $\{ \mc M_t^{n}(\vphi) \;;\; t \in [0,T]\}_{n \in \bb N}$ } From the Aldous' criterion \cite[Proposition 12]{gj2014}, we fix a stopping time $\tau$ bounding by $T$. We have, for any $\lambda >0$,
\begin{align*}
\bb P_\rho \Big[ \big| &\mc M_{\tau+\lambda}^{n}(\vphi)-\mc M_\tau^{n}(\vphi)\big|>\varepsilon\Big] \\ & \le \frac{1}{\varepsilon^2} \bb E_\rho\Big[ \big(\mc M_{\tau+\lambda}^{n}(\vphi)-\mc M_\tau^{n}(\vphi)\big)^2\Big]\\
& \le \frac{1}{\varepsilon^2} \bb E_\rho\bigg[ \int_\tau^{\tau+\lambda} \frac{1}{2n}  \sum_{x\neq -1} \tau_x G_n(\eta_{sn^2}) \big(\nabla_x^n \vphi\big)^2\; ds + \int_\tau^{\tau+\lambda} \frac{1}{2n} H_n(\eta_{sn^2})  \big(\nabla_{-1}^n \vphi\big)^2\; ds \bigg]\\
& \le \frac{\lambda}{\varepsilon^2}\frac{1}{n} \sum_{x \neq 1} \big(\nabla_x^n \vphi\big)^2 + \frac{\lambda \alpha }{\varepsilon^2} \; \frac n {n^{\beta}} \Big\{\vphi\Big(\frac{0}{n}\Big)-\vphi\Big(\frac{-1}{n}\Big)\Big\}^2+ \frac{\lambda a }{\varepsilon^2} \; \frac n {n^\gamma} \Big\{\vphi\Big(\frac{0}{n}\Big)-\vphi\Big(\frac{-1}{n}\Big)\Big\}^2.
\end{align*}
By using the same arguments as in the proof of Lemma \ref{lem:quadratic-var}, and since $\vphi \in \mc S_\beta(\bb R)$, one can prove that the right-hand side above converges to $({\lambda}/{\varepsilon^2})\|\nabla_\beta\vphi\|_{2,\beta}^2$ as $n \to \infty$.
We also have
\[
\bb E_\rho \Big[ \big(\mc M_t^{n}(\vphi)\big)^2 \Big] \le  t\chi(\rho) \|\nabla_\beta\vphi\|_{2,\beta}^2.
\]
Therefore, for any fixed time $t \in [0,T]$, the sequence $\{ \mc M_t^{n}(\vphi)\}_{n \in \bb N}$ is uniformly bounded in ${\bf L}^2(\bb P_\rho)$. The tightness of  $\{ \mc M_t^{n}(\vphi) \;;\; t \in [0,T]\}_{n \in \bb N}$  follows from Aldous' criterion, and we also have that any limit point of the sequence $\{ \mc M_t^{n}(\vphi)\; ;\; t \in [0,T]\}_{n \in \bb N}$ is concentrated on continuous trajectories.

\subsection{Tightness for $\{ \mc I_t^{n}(\vphi)+\mc R_t^n(\vphi)\; ; \;t \in [0,T]\}_{n \in \bb N}$ }
Recall that
\[\mc I_t^{n}(\vphi)+\mc R_t^n(\vphi)
:= \int_0^t \frac{1}{\sqrt n} \sum_{x \in \bb Z} n^2 \bb L_n^S\vphi\Big(\frac{x}{n}\Big) \eta_{sn^2}(x) \; ds.
\]
Then, the proof is exactly the same as in \cite[Section 3.2]{fgn3}, since the above quantity does not depend on $\gamma$.

\subsection{Tightness for $\{ \mc B_t^{n}(\vphi)\; ;\; t \in [0,T]\}_{n \in \bb N}$ } \label{tightB}

%From Proposition \ref{vanishingofB}, we only have to prove tightness for $\beta \le 1/2$ and $\gamma=1/2$. Recall \eqref{eq:B}.
% From Theorem \ref{theo:BG} we have that:
%\begin{multline*}
%\mathbb{E}_{\rho}\bigg[\Big(\int_{0}^t \sum_{x\in\mathbb{Z}}\nabla_x^n\vphi \;\Big\{-a\bar{\eta}_{sn^2}(x)\bar{\eta}_{sn^2}(x+1)+a\big(\vec{\eta}_{sn^2}^L(x)\big)^2-a
%\frac{\chi(\rho)}{L}\Big\}\; ds\Big)^2\bigg]\\
%\leq Ct\Big\{\frac{L}{n}+\frac{n^\beta}{\alpha n}\left(1+\big(\log_2(L)\big)^2\right)+\frac{tn}{L^2}\Big\}\|\nabla^n\vphi\|_{2,n}^2.
%\end{multline*}
%%
%Moreover, by independence and the Cauchy-Schwarz inequality we can write
%\[\mathbb{E}_{\rho}\bigg[\Big(\int_{0}^t \sum_{x\in\mathbb{Z}}\nabla_x^n\vphi\;\Big(\big(\vec{\eta}_{sn^2}^L(x)\big)^2-
%\frac{\chi(\rho)}{L}\Big)ds\Big)^2\bigg]  \le C\frac{t^2n}{L}  \|\nabla^n\vphi\|_{2,n}^2 .
%\]
%Combining these two estimates and using again the inequality $(x+y)^2\leq 2x^2+2y^2$, we conclude that

Here we only have to prove tightness in the case $\beta\leq 1/2$ and $\gamma=1/2$, since in the other cases, by Proposition \ref{vanishingofB},  it gives no contribution in the limit in $\bf L^2(\bb P_\rho)$. 

Recall the bound \eqref{eq:boundB} that we have obtained for $\bb E_\rho[(\mc B_t^n(\varphi))^2]$. Since $\gamma=1/2$, this bound becomes
\[
\mathbb{E}_{\rho} \Big[ \big(\mc B_t^{n}(\vphi)\big)^2\Big] \le Ct \Big\{\frac{L}{n}+\frac{n^\beta}{\alpha n}\left(1+\big(\log_2(L)\big)^2\right)+\frac{tn}{L^2}+\frac{tn}{L} \Big\}\|\nabla^n\vphi\|_{2,n}^2.
\]
We choose $L$ equal to the integer part of $n \sqrt t$. The quantity
\[
\frac{tn^\beta}{\alpha n}\left(1+\big(\log_2(L)\big)^2\right)
\]
vanishes as $n\to\infty$ because $\beta < 1$, and we obtain that there exists a constant $K$ (that does not depend on $\vphi$) such that, for all $t \ge (1/n^2)$,
\[
\mathbb{E}_{\rho} \Big[ \big(\mc B_t^{n}(\vphi)\big)^2\Big] \le K t^{3/2} \| \nabla^n\vphi\|_{2,n}^2.
\]
For small times $t \le (1/n^2)$, we use a simple Cauchy-Schwarz inequality together with the independence property. We obtain:
\[
\mathbb{E}_{\rho} \Big[ \big(\mc B_t^{n}(\vphi)\big)^2\Big] \le t^2 n  \| \nabla^n\vphi\|_{2,n}^2 \le t^{3/2}  \| \nabla^n\vphi\|_{2,n}^2.
\]
Since the process $\{\eta_{tn^2}\; ;\; t \in [0,T] \}$ is stationary, the following estimate follows:
\[
\bb E_\rho  \Big[ \big(\mc B_t^{n}(\vphi)-\mc B_s^{n}(\vphi)\big)^2\Big] \le K |t-s|^{3/2}   \| \nabla^n\vphi\|_{2,n}.
\]

\subsection{Characterization of limit points}
\label{ssec:limitpoints}
Provided with the respective tightness of the processes, we proceed to the proof of Theorem \ref{thm:crossover} and  Theorem \ref{theo:fluct}. We closely follow \cite{fgn3} (for Theorem \ref{thm:crossover}) and \cite{gj2014} (for Theorem \ref{theo:fluct})  and only give the main arguments.

From the tightness of the four sequences below, we can consider (up to extraction) that $\{\mc Y_t^{n}\;;\;t\in[0,T]\}$, $\{\mc M_t^{n}\;;\;t\in[0,T]\}$, $\{\mc I_t^n + \mc R_t^n\;;\;t\in[0,T]\}$, and $\{\mc B_t^{n}\;;\;t\in[0,T]\}$ converge as $n\to\infty$ to  $\{\mc Y_t\;;\;t\in[0,T]\}$, $\{\mc M_t\;;\;t\in[0,T]\}$, $\{\mc I_t + \mc R_t\;;\;t\in[0,T]\}$, and $\{\mc B_t\;;\;t\in[0,T]\}$, respectively.

\begin{proof}[Proof of Theorem \ref{thm:crossover}]

From Proposition \ref{vanishingofB} and Lemma \ref{lem:quadratic-var}, it is not difficult to show (see \cite{fgn3} for details) that $\{ \mc Y_t \; ; \; t \in [0,T]\}$ is in $\mc C([0,T],\mc S'_\beta(\bb R))$, and also that, for $\vphi \in \mc S_\beta(\bb R)$,
\[\mc M_t(\vphi)=\mc Y_t(\vphi)-\mc Y_0(\vphi) - \frac12\int_0^t \mc Y_s(\Delta_\beta(\vphi))\;ds\]
is a martingale of quadratic variation given by $t\chi(\rho)\|\nabla_\beta \vphi\|_{2,\beta}^2$ (using Lemma \ref{lem:quadratic-var}).  Then, Theorem \ref{thm:crossover} is a direct consequence of Proposition \ref{prop:sol_ou}.
\end{proof}

\begin{proof}[Proof of Theorem \ref{theo:fluct}]

We are now focusing on the case $\beta \leq 1/2$ and $\gamma=1/2$, hence we work with the usual Schwartz space $\mc S(\bb R)$ and the usual operators $\nabla$ and $\Delta$. We refer the reader to \cite{gj2014} to see that:
\begin{itemize}
\item the process $\{\mc Y_t\;;\;t\in[0,T]\}$ has continuous trajectories with respect to the strong topology of $\mc S'(\bb R)$,
\item the process $\{\mc Y_t\;;\;t\in[0,T]\}$ is stationary.
\end{itemize}
The key ingredients to prove Theorem \ref{theo:fluct} are \eqref{eq:ec1} and \eqref{eq:ec2}. An easy consequence of Theorem  \ref{theo:BG} (which can be derived as in Subsection \ref{vanishB} after taking $\ell=L=\varepsilon n$) is the following: there exists a constant $K$ such that, for any $\vphi \in \mc S(\bb R)$,
\[\bb E_\rho\bigg[\Big(\mc B_t(\vphi)-\mc B_s(\vphi) - \chi(\rho)\mc A_{s,t}^\varepsilon(\vphi)\Big)^2\bigg]\le K(t-s)\varepsilon  \|\nabla\vphi\|_2^2.\]
 Keeping this inequality in mind, then adding and subtracting $\chi(\rho)^{-1}(\mc B_t(\vphi)-\mc B_s(\vphi))$ inside the square in
$
\bb E_\rho[ (\mc A_{s,t}^\varepsilon(\vphi)-\mc A_{s,t}^\delta(\vphi))^2]
$   we are lead to the the desired inequality in \eqref{eq:ec2}.

 Moreover, to prove \eqref{eq:ec1} it is enough to have
\begin{equation}\label{eq:energyestimate}
\lim_{n\to\infty} \bb E_\rho \Big[\big(\mc I_t^{n}(\vphi)\big)^2\Big]=\lim_{n\to\infty}\bb E_{\rho}\bigg[\Big(\frac12\int_0^t \mc Y_s^n(\Delta \vphi) \;ds\Big)^2\bigg] \le \kappa\, t  \|\nabla\vphi\|_2^2.
\end{equation}
Let us decompose the line $\bb Z$ into boxes of size $\ell\in\bb N$. We have
\begin{align}
\int_0^t \mc Y_s^n(\Delta \vphi) \;ds & = \int_0^t \frac{1}{\sqrt n} \sum_{x \in \bb Z} \Delta\vphi\Big(\frac{x}{n}\Big) \bar\eta_{sn^2}(x) \; ds\notag\\
&=\int_0^t \frac{1}{\sqrt n} \sum_{j \in \bb Z}\sum_{k=1}^{\ell} \Big[\Delta\vphi\Big(\frac{j\ell+k}{n}\Big) - \Delta\vphi\Big(\frac{j\ell}{n}\Big)\Big]  \bar\eta_{sn^2}(j\ell+k) \; ds\label{eq:decomp1}\\
& \quad + \int_0^t \frac{1}{\sqrt n} \sum_{j \in \bb Z}\Big[\Delta\vphi\Big(\frac{j\ell}{n}\Big) \sum_{k=1}^{\ell} \bar\eta_{sn^2}(j\ell+k) \Big]\; ds.\label{eq:decomp2}
\end{align}
Due to the smoothness and the fast decaying of $\vphi$, by the Cauchy-Schwarz inequality, we can deal with \eqref{eq:decomp1} as follows:
\begin{multline*}\bb E_{\rho}\bigg[\Big(\int_0^t \frac{1}{\sqrt n} \sum_{j \in \bb Z}\sum_{k=1}^{\ell} \Big[\Delta\vphi\Big(\frac{j\ell+k}{n}\Big) - \Delta\vphi\Big(\frac{j\ell}{n}\Big)\Big]  \bar\eta_{sn^2}(j\ell+k) \; ds\Big)^2\bigg] \\ \leq Ct^2 \frac{1}{n} \sum_{j \in \bb Z}\sum_{k=1}^{\ell} \Big[\Delta\vphi\Big(\frac{j\ell+k}{n}\Big) - \Delta\vphi\Big(\frac{j\ell}{n}\Big)\Big]^2 \leq C't^2 \frac{\ell^2}{n^2},   \end{multline*}
where $C,C'>0$ are real positive constants that do not depend on $\ell$ nor $n$. We now turn to \eqref{eq:decomp2}. Recall from \eqref{eq:mean} the definition of  $\vec{\eta}^{\ell}(x)$. It holds that:
\begin{align*}
 \bb E_{\rho}\bigg[\Big(\int_0^t & \frac{1}{\sqrt n} \sum_{j \in \bb Z}\Big[\Delta\vphi\Big(\frac{j\ell}{n}\Big)  \sum_{k=1}^{\ell} \bar\eta_{sn^2}(j\ell+k) \Big]\; ds \Big)^2\bigg]  \\ & =\bb E_{\rho}\bigg[\Big( \int_0^t \frac{\ell}{\sqrt n} \sum_{j \in \bb Z}\Delta\vphi\Big(\frac{j\ell}{n}\Big)  \vec\eta^\ell_{sn^2}(j\ell) \; ds\Big)^2\bigg]  \\
 &  =  \bb E_{\rho}\bigg[\Big( \int_0^t \frac{\ell}{\sqrt n} \sum_{j \in \bb Z} \frac{n}{\ell} \Big[\nabla\vphi\Big(\frac{j\ell}{n}\Big)-\nabla\vphi\Big(\frac{j\ell-\ell}{n}\Big)\Big]  \vec\eta^\ell_{sn^2}(j\ell) \; ds \Big)^2\bigg] + O\Big(\frac{\ell^2}{n^2}\Big).
\end{align*}
After summing by parts, and taking $\ell=\varepsilon n$ ($\varepsilon >0)$, we deduce that
\begin{multline*}
\bb E_{\rho}\bigg[\Big(\int_0^t \mc Y_s^n(\Delta \vphi)\; ds\Big)^2\bigg] \\=\bb E_{\rho}\bigg[\Big( \int_0^t \sqrt n \sum_{j \in \bb Z}  \nabla\vphi\Big(\frac{j\varepsilon n}{n}\Big)  \Big\{\vec\eta^{\varepsilon n}_{sn^2}(j\varepsilon n) - \vec\eta^{\varepsilon n}_{sn^2}((j+1)\varepsilon n)\Big\}  \; ds \Big)^2\bigg] + O(\varepsilon^2).
\end{multline*}
Therefore, for proving \eqref{eq:energyestimate} it is enough to show that
\[
\limsup_{\varepsilon \to 0} \limsup_{n \to \infty} \mathbb{E}_{\rho}\bigg[\Big(\sqrt n\int_{0}^t  \sum_{x\in\varepsilon n \mathbb{Z}}\nabla\vphi\Big(\frac{x}{n}\Big)\Big\{\vec{\eta}_{sn^2}^{\varepsilon n}(x)-\vec{\eta}_{sn^2}^{\varepsilon n}(x+\varepsilon n)\Big\}\;ds\Big)^2\bigg] \leq \kappa t \|\nabla\vphi\|_2^2.
\]
Last inequality above is a direct consequence of Corollary \ref{cor:estimate}, stated and proved in Section \ref{sec:auxi}.
\end{proof}

\section{Proof of the Boltzmann-Gibbs Principle}\label{sec:BG}

In this section we present a proof of  the second-order Boltzmann-Gibbs principle, stated in Theorem \ref{theo:BG}, which is  the main technical difficulty in this work. For that purpose we recall \eqref{eq:mean} and similarly, for $L\in\mathbb{N}$, we define $\vecleft\eta^L(x)$ as the empirical average to the left of the site $x$, that is:
\[
\vecleft\eta^L(x):=\frac{1}{L}\sum_{y=x-L}^{x-1} \bar\eta(y).
\]
 The main idea to prove   Theorem \ref{theo:BG} consists in introducing averages over boxes of a certain intermediate size $\ell$, until reaching the desired box of size $L$. To achieve that goal a multi-scale analysis is done for a particular function, starting at the initial size $\ell_0$ which does not depend on $n$. In order to prove Theorem \ref{theo:BG}, we use the following decomposition:
\begin{align}&\bar{\eta}(x)\bar{\eta}(x+1) -\big(\vec{\eta}^L(x)\big)^2+\frac{\chi(\rho)}{L}\notag\\
&=\bar{\eta}(x)\big(\bar{\eta}(x+1)-\vec{\eta}^{\ell_0}(x)\big) \label{eq:term1} \vphantom{\Big(}\\
& \quad + \vec\eta^{\ell_0}(x) \big(\eta(x)-\vecleft\eta^{\ell_0}(x)\big) \label{eq:term1bis} \vphantom{\Big(}\\
& \quad + \vecleft\eta^{\ell_0}(x) \big( \vec\eta^{\ell_0}(x)-\vec\eta^L(x)\big) \label{eq:term3}\vphantom{\Big(}\\
& \quad +\vec\eta^L(x) \big(\vecleft\eta^{\ell_0}(x)-\eta(x)\big) \label{eq:term4} \vphantom{\Big(}\\
&\quad +\vec{\eta}^L(x)\bar{\eta}(x)
-\big(\vec{\eta}^L(x)\big)^2+ \frac{\big(\bar\eta(x)-\bar\eta(x+1)\big)^2}{2L}\label{eq:term5}\vphantom{\Big(}\\
&\quad -\frac{\big(\bar\eta(x)-\bar\eta(x+1)\big)^2}{2L}+\frac{\chi(\rho)}{L}.\label{eq:term6}\vphantom{\Big(}
\end{align}
The decomposition above involves six main terms, which we treat separately. Let us sketch an outline of the section: \begin{itemize}
\item the term \eqref{eq:term1} is estimated in Subsection \ref{ssec:oneblock} by what we wall \emph{the one-block estimate} (Proposition \ref{prop:one-block}). With a very similar argument, both terms \eqref{eq:term1bis} and \eqref{eq:term4} can also be worked out, see Proposition \ref{prop:one-block-left};  \item the term \eqref{eq:term3} is the most trickiest one, for which we need to perform a \emph{multi-scale analysis}, presented in Subsection \ref{ssec:multi};
\item a simple Cauchy-Schwarz estimate allows to control  \eqref{eq:term6} and is exposed in Subsection \ref{ssec:cs};
\item finally, \eqref{eq:term5}  is treated separately in Subsection \ref{ssec:decomp}.
\end{itemize}

The main idea that permits to obtain sharp bounds consists in counting very carefully the number of times we need to cross the slow bond $\{-1,0\}$ when we make  replacements of type $\vec\eta^\ell(x) \mapsto \vec\eta^L(x)$ for some $\ell,L\in\bb N$ (for instance). For that purpose, and to facilitate the reading, we introduce the notation 
\[
\Lambda_{y}^{\ell}:=\{-\ell-y,\ldots,-y-1\}, \qquad \ell \in \bb N, y \in \bb Z.
\]
To keep the notation simple in the following argument, we let $C=C(\rho)$  denote  a constant (that does  not depend on $n$ nor on $t$ nor on the sizes of the boxes involved) that may change from line to line.  
In all what follows, $v:\mathbb{Z}\to{\mathbb{R}}$ is a function satisfying \eqref{vsummable}. Along the proofs we will consider several finite boxes in $\bb Z$.

\subsection{Estimate of \eqref{eq:term1} \eqref{eq:term1bis} and \eqref{eq:term4}: One-block estimates} \label{ssec:oneblock}

To bound  \eqref{eq:term1} we use the following:

\begin{proposition} [One-block estimate to the right]
\label{prop:one-block}
Let $\ell_0\in\bb {N}$ and $\psi:\Omega\to\mathbb{R}$ a local function whose support does not intersect the set of points $\{1,\dots,\ell_0\}$.  We assume that $\psi$ has mean zero with respect to $\nu_\rho$ and we denote by $\mathrm{Var}(\psi)$ its variance.

 Then, for any $t>0$:
\begin{multline*}
\mathbb{E}_{\rho}\bigg[\Big(\int_{0}^t  \sum_{x\in\mathbb{Z}}v(x)\tau_x\psi(\eta_{sn^2})\big(\bar{\eta}_{sn^2}(x+1)-\vec{\eta}_{sn^2}^{\ell_0}(x)\big)\;ds\Big)^2\bigg] \\
\leq
{C(\rho)t}{\mathrm{Var}(\psi)}\bigg(\frac{\ell_0^2}{n}\|v\|_{2,n}^2+\frac{\ell_0 n^\beta}{n^2\alpha}\sum_{x \in \Lambda_1^{\ell_0-1}} v^2(x)\bigg),
\end{multline*}
recalling that $\Lambda_1^{\ell_0-1}=\{-\ell_0,\ldots,-2\}$.
\end{proposition}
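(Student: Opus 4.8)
The plan is to reduce the space--time estimate to a static variational problem through the Kipnis--Varadhan inequality, and then to resolve that problem by a telescoping-plus-integration-by-parts argument in which the crossings of the slow bond are weighted separately. First I would observe that the integrand is the time integral of
\[
V(\eta):=\sum_{x\in\mathbb Z}v(x)\,\tau_x\psi(\eta)\,\big(\bar\eta(x+1)-\vec\eta^{\ell_0}(x)\big),
\]
which has zero mean under $\nu_\rho$: since the support of $\tau_x\psi$ is disjoint from $\{x+1,\dots,x+\ell_0\}$, the expectation factorizes and $\langle\tau_x\psi\rangle_\rho=0$. As $\nu_\rho$ is invariant, the Kipnis--Varadhan inequality for the stationary (non-reversible) process provides a constant such that
\[
\mathbb E_\rho\bigg[\Big(\int_0^t V(\eta_{sn^2})\,ds\Big)^2\bigg]\le C\,t\,\sup_{f}\Big\{2\langle V,f\rangle_\rho-n^2\mathfrak D_n(f)\Big\},
\]
where the supremum runs over $f\in\mathbf L^2(\nu_\rho)$ and $\mathfrak D_n$ is the Dirichlet form \eqref{eq:dirichlet} of the symmetric part of the generator. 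It therefore suffices to bound this variational expression.

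Next I would decompose $\bar\eta(x+1)-\vec\eta^{\ell_0}(x)$ into nearest-neighbour gradients. Telescoping each difference $\eta(x+1)-\eta(y)$ and averaging over $y\in\{x+1,\dots,x+\ell_0\}$, one gets
\[
\bar\eta(x+1)-\vec\eta^{\ell_0}(x)=\sum_{j=1}^{\ell_0-1}\frac{\ell_0-j}{\ell_0}\big(\eta(x+j)-\eta(x+j+1)\big).
\]
Because $\tau_x\psi$ is invariant under the transposition of $x+j$ and $x+j+1$ (its support avoids these two sites), while $\eta(x+j)-\eta(x+j+1)$ is anti-invariant, the change of variables $\eta\mapsto\eta^{x+j,x+j+1}$ yields the integration-by-parts identity
\[
\big\langle \tau_x\psi\,(\eta(z)-\eta(z+1))\,f\big\rangle_\rho=-\tfrac12\big\langle \tau_x\psi\,(\eta(z)-\eta(z+1))\,\nabla_{z,z+1}f\big\rangle_\rho,\qquad z=x+j.
\]
Inserting this into $2\langle V,f\rangle_\rho$ turns the inner product into a sum over all pairs $(x,j)$ of terms that each pair a local function with a single bond gradient $\nabla_{z,z+1}f$.

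Then I would apply Young's inequality with a bond-dependent weight $A_{x,j}>0$ to every such term. The ``variance'' part factorizes by independence into $\chi(\rho)\,\mathrm{Var}(\psi)$ times $v^2(x)\big(\tfrac{\ell_0-j}{\ell_0}\big)^2 A_{x,j}$ (up to a universal constant), whereas the ``gradient'' part $\tfrac{1}{2A_{x,j}}\langle(\nabla_{z,z+1}f)^2\rangle_\rho$ must be absorbed into $n^2\mathfrak D_n(f)$. The decisive combinatorial point is that a fixed bond $(z,z+1)$ is hit by exactly the pairs with $x+j=z$, of which there are at most $\ell_0-1$. For an ordinary bond ($z\neq-1$, coefficient $1/2$) I would choose $A_{x,j}$ of order $\ell_0/n^2$, so that summing $\tfrac{1}{2A_{x,j}}$ over the $\le\ell_0$ pairs stays below $n^2/2$; this produces the first contribution $C\,\mathrm{Var}(\psi)\,\tfrac{\ell_0^2}{n}\|v\|_{2,n}^2$. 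For the slow bond ($z=-1$, coefficient $\alpha/(2n^\beta)$) I would instead take $A_{x,j}$ of order $\ell_0 n^\beta/(n^2\alpha)$, so that the gradient term is controlled by $n^2\tfrac{\alpha}{2n^\beta}\langle(\nabla_{-1,0}f)^2\rangle_\rho$; since the pairs crossing this bond are exactly those with $x\in\{-\ell_0,\dots,-2\}=\Lambda_1^{\ell_0-1}$, the matching variance part is $C\,\mathrm{Var}(\psi)\,\tfrac{\ell_0 n^\beta}{n^2\alpha}\sum_{x\in\Lambda_1^{\ell_0-1}}v^2(x)$. Summing the two contributions yields precisely the asserted bound.

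The main obstacle is exactly the treatment of the slow bond: because its Dirichlet coefficient degenerates like $n^{-\beta}$, a uniform weight $A_{x,j}$ would not allow $n^2\mathfrak D_n(f)$ to absorb the gradient term, and one is forced to weight the pairs $(x,j)$ crossing $\{-1,0\}$ separately. Keeping track of precisely which sites $x$ feed into this crossing (the set $\Lambda_1^{\ell_0-1}$) is what produces the sharp, localized second term and, crucially, circumvents the missing spectral gap inequality.
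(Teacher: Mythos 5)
Your proposal is correct and follows essentially the same route as the paper: Kipnis--Varadhan reduction to the $H_{-1}$ variational formula, decomposition of $\bar\eta(x+1)-\vec\eta^{\ell_0}(x)$ into nearest-neighbour gradients (your single sum with weights $\tfrac{\ell_0-j}{\ell_0}$ is just the paper's double sum over $y,z$ after Fubini), integration by parts via the swap $\eta\mapsto\eta^{z,z+1}$, and Young's inequality with weights tuned separately for the slow bond. Your bond-dependent weights $A_{x,j}$ and direct count of the pairs crossing $\{-1,0\}$ reproduce exactly the paper's insertion of $\Xi^n_{z,z+1}$ for $x\in\Lambda_1^{\ell_0-1}$ and its appeal to Lemma \ref{lemma:dir1}.
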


\begin{remark} In particular, notice that
\[\sum_{x \in \Lambda_1^{\ell_0-1}} v^2(x) \le \sum_{x\neq -1} v^2(x).\]
\end{remark}

\begin{proof}
By  \cite[Lemma 2.4]{KLO}, we can bound the previous expectation from above by
\begin{equation*}
 Ct\bigg\|\sum_{x\in\mathbb{Z}}v(x)\tau_x\psi(\eta)\big(\bar{\eta}(x+1)-\vec{\eta}^{\ell_0}(x)\big)\bigg\|_{-1}^2,\end{equation*}
where the $H_{-1}$-norm is defined through a variational formula, and in particular the previous expression is equal to
\begin{equation*}
Ct\sup_{f\in {\bf L}^2(\nu_\rho)}\bigg\{2\int \sum_{x\in\mathbb{Z}}v(x)\tau_x\psi(\eta)\big(\bar{\eta}(x+1)-\vec{\eta}^{\ell_0}(x)\big)f(\eta)\nu_\rho(d\eta)-n^2D_n(f)\bigg\},
\end{equation*}
where $D_n(f)$ is the Dirichlet form associated to the Markov process, and is defined as $D_n(f)=\int f(\eta)\mathcal{L}_nf(\eta)\nu_\rho(d\eta)$, see \eqref{eq:dirichlet}.
Now we notice that
\begin{equation*}
\bar{\eta}(x+1)-\vec{\eta}^{\ell_0}(x)=\frac{1}{\ell_0}\sum_{y=x+2}^{x+\ell_0}\sum_{z=x+1}^{y-1}(\bar{\eta}(z)-\bar{\eta}(z+1)).
\end{equation*}
 Now, we write the integral above as twice its half and in one of the terms we make the exchange $\eta$ to $\eta^{z,z+1}$, for which the measure $\nu_\rho$ is invariant. Since the support of $\tau_x\psi$ does not intersect this set of points, it also remains invariant, and we get
\begin{equation*}
\begin{split}
&2\int\sum_{x\in\mathbb{Z}}v(x)\tau_x\psi(\eta)\Big\{\frac{1}{\ell_0}\sum_{y=x+2}^{x+\ell_0}\sum_{z=x+1}^{y-1}(\bar{\eta}(z)-\bar{\eta}(z+1))\Big\}f(\eta)\nu_\rho(d\eta)\\
=\; &2\int\sum_{x\in\mathbb{Z}}v(x)\tau_x\psi(\eta)\Big\{\frac{1}{\ell_0}\sum_{y=x+2}^{x+\ell_0}\sum_{z=x+1}^{y-1}(\bar{\eta}(z+1)-\bar{\eta}(z))\Big\}f(\eta^{z,z+1})\nu_\rho(d\eta)\\
=\; &\int\sum_{x\in\mathbb{Z}}v(x)\tau_x\psi(\eta)\Big\{\frac{1}{\ell_0}\sum_{y=x+2}^{x+\ell_0}\sum_{z=x+1}^{y-1}(\bar{\eta}(z)-\bar{\eta}(z+1))\Big\}(f(\eta)-f(\eta^{z,z+1}))\nu_\rho(d\eta).
\end{split}
\end{equation*}
At this point we have to be careful and to split the sum over $x$, according to the points for which the slow bond $\{-1,0\}$ belongs to the following set of bonds \begin{equation}\label{eq:set}\mathcal{E}_x^{\ell_0}:=\big\{(z,z+1):\,x+1\leq z\leq y-1\, \text{ and }\, x+2\leq y\leq x+\ell_0\big\}.\end{equation} For that purpose, recall that $\Lambda_1^{\ell_0-1}=\{-\ell_0,\ldots,-2\}$.
A simple computation shows that for $x\in\Lambda_1^{\ell_0-1}$ the slow bond belongs to the set $\mathcal{E}_x^{\ell_0}$, otherwise, it does not.
According to this observation, the last integral can be written as the sum of
\begin{equation}
\label{eq:auxest1}
\int\sum_{x\notin\Lambda_1^{\ell_0-1}}v(x)\tau_x\psi(\eta)\Big\{\frac{1}{\ell_0}\sum_{y=x+2}^{x+\ell_0}\sum_{z=x+1}^{y-1}(\bar{\eta}(z)-\bar{\eta}(z+1))\Big\}(f(\eta)-f(\eta^{z,z+1}))\nu_\rho(d\eta) 
\end{equation} 
and 
\begin{equation}
\label{eq:auxest11}
\int\sum_{x\in\Lambda_1^{\ell_0-1}}v(x)\tau_x\psi(\eta)\Big\{\frac{1}{\ell_0}\sum_{y=x+2}^{x+\ell_0}\sum_{z=x+1}^{y-1}(\bar{\eta}(z)-\bar{\eta}(z+1))\Big\}(f(\eta)-f(\eta^{z,z+1}))\nu_\rho(d\eta).
\end{equation}
By Young's inequality, for any $(A_x)_{x\in\mathbb{Z}}$ of positive real numbers,  \eqref{eq:auxest1} is bounded by
\begin{equation*}
\begin{split}
&\frac{1}{\ell_0}\sum_{x\notin\Lambda_1^{\ell_0-1}}\sum_{y=x+2}^{x+\ell_0}\sum_{z=x+1}^{y-1}v(x)\frac{A_x}{2}\int(\tau_x\psi(\eta))^2(\bar{\eta}(z)-\bar{\eta}(z+1))^2\nu_\rho(d\eta)\\
+&\frac{1}{\ell_0}\sum_{x\notin\Lambda_1^{\ell_0-1}}\sum_{y=x+2}^{x+\ell_0}\sum_{z=x+1}^{y-1}\frac{v(x)}{2A_x}I_{z,z+1}(f),
\end{split}
\end{equation*}
where we define \begin{equation}I_{z,z+1}(f):=\int\big(f(\eta)-f(\eta^{z,z+1})\big)^2\nu_\rho(d\eta).\label{eq:def_i}\end{equation}
By choosing, for each $x\in\bb Z$, $2A_x=\ell_0 v(x)/n^2$ and by independence, the first term above is bounded by
\begin{equation}\label{error1}
C(\rho)\frac{\mathrm{Var}(\psi)}{n^2}\sum_{x\notin\Lambda_1^{\ell_0-1}}\sum_{y=x+2}^{x+\ell_0}\sum_{z=x+1}^{y-1}v^2(x)\leq C(\rho)\mathrm{Var}(\psi)\frac{\ell_0^2}{n^2}\sum_{x\notin\Lambda_1^{\ell_0-1}}v^2(x),
\end{equation}
for some positive constant $C(\rho)$. The second one is bounded from above by
\begin{equation}\label{error2}
\frac{n^2}{\ell_0^2}\sum_{x\notin\Lambda_1^{\ell_0-1}}\sum_{y=x+2}^{x+\ell_0}\sum_{z=x+1}^{y-1}I_{z,z+1}(f).
\end{equation}
Now, we use again Young's inequality to bound the second integral  \eqref{eq:auxest11} by
\begin{align}
&\frac{1}{\ell_0}\sum_{x\in\Lambda_1^{\ell_0-1}}\sum_{y=x+2}^{x+\ell_0}\sum_{z=x+1}^{y-1}\frac{v(x)A_x}{2\Xi^n_{z,z+1}}\int(\tau_x\psi(\eta))^2(\bar{\eta}(z)-\bar{\eta}(z+1))^2\nu_\rho(d\eta) \label{eq:auxest2}\\
+&\frac{1}{\ell_0}\sum_{x\in\Lambda_1^{\ell_0-1}}\sum_{y=x+2}^{x+\ell_0}\sum_{z=x+1}^{y-1}\frac{v(x)\Xi^n_{z,z+1}}{2A_x}I_{z,z+1}(f). \label{eq:auxest22}
\end{align}
Notice that we have introduced the positive numbers $\Xi^n_{z,z+1}$ which are defined below in \eqref{eq:zeta} and correspond to the coefficients of the Dirichlet form \eqref{eq:dirichlet}. These numbers have to be added in the case $x\in\Lambda_1^{\ell_0-1}$ for which case the slow bond belongs to the set $\mathcal{E}_x^{\ell_0}$.
By taking, for each $x\in\bb Z$, $2A_x=\ell_0 v(x)/n^2$, the first term \eqref{eq:auxest2} can be bounded by
\begin{equation}\label{expextra}
\frac{\mathrm{Var}(\psi)}{n^2}\sum_{x\in\Lambda_1^{\ell_0-1}}\sum_{y=x+2}^{x+\ell_0}\sum_{z=x+1}^{y-1}\frac{v^2(x)}{\Xi^n_{z,z+1}}C(\rho).
\end{equation}
Now, we remark that if $x=-\ell_0$ then the slow bond appears only once in $\mathcal{E}_x^{\ell_0}$, but for $x=-\ell_0+1$ then the slow bond appears twice in $\mathcal{E}_x^{\ell_0}$, and so on, and finally for $x=-2$, the slow bond appears $\ell_0-1$ times in $\mathcal{E}_x^{\ell_0}$.
Therefore, we can bound the previous sum by
\begin{equation}\label{error3}
\frac{C(\rho)}{\Xi^n_{-1,0}}\big[v^2(-\ell_0)+2v^2(-\ell_0+1)+\cdots+(\ell_0-1) v^2(-2)\big]+\frac{C(\rho)}{n^2}\sum_{x\in\Lambda_1^{\ell_0-1}}\sum_{y=x+2}^{x+\ell_0}\sum_{\substack{z=x+1\\z\neq -1}}^{y-1}\frac{v^2(x)}{\Xi^n_{z,z+1}}
\end{equation}
so that \eqref{expextra} is bounded from above by 
\[
C(\rho)\mathrm{Var}(\psi)\Big(\frac{\ell_0 n^\beta}{n^2\alpha}+\frac{\ell_0^2}{n^2}\Big)\sum_{x\in\Lambda_1^{\ell_0-1}}v^2(x).
\]
The second term  \eqref{eq:auxest22}
is bounded by
\begin{equation}\label{error4}
\frac{n^2}{\ell_0^2}\sum_{x\in\Lambda_1^{\ell_0-1}}\sum_{y=x+2}^{x+\ell_0}\sum_{z=x+1}^{y-1}\Xi^n_{z,z+1}I_{z,z+1}(f).
\end{equation}
Putting together \eqref{error1}, \eqref{error2}, \eqref{error3} and \eqref{error4}, the integral in the statement of Proposition \ref{prop:one-block} is bounded from above by
\begin{equation*}
C(\rho)\mathrm{Var}(\psi)\bigg(\frac{\ell_0^2}{n}\|v\|_{2,n}^2+\frac{\ell_0 n^\beta}{n^2\alpha}\sum_{x \in \Lambda_1^{\ell_0-1}} v^2(x)\bigg)
+\frac{n^2}{\ell_0^2}\sum_{x\in\mathbb{Z}}\sum_{y=x+2}^{x+\ell_0}\sum_{z=x+1}^{y-1}\Xi^n_{z,z+1}I_{z,z+1}(f).
\end{equation*}
We now refer to Lemma \ref{lemma:dir1} below, which gives estimates on the Dirichlet form: by \eqref{eq:dirich1} applied with $\ell=\ell_0-1$ the result follows. 

\end{proof}

This one-block estimate is enough to control \eqref{eq:term1} by  taking $\tau_x\psi(\eta)=\bar{\eta}(x)$. To treat the remaining terms  \eqref{eq:term1bis} and \eqref{eq:term4}, since the averages are taken to the left of the site $x$, we need to adapt the previous argument, as given in Proposition \ref{prop:one-block-left} below.

\begin{proposition}[One-block estimate to the left]\label{prop:one-block-left}
Let $\ell_0\in\bb {N}$ and $\psi:\Omega\to\mathbb{R}$ a local function whose support does not intersect the set of points $\{-\ell_0,\dots,0\}$. We assume that $\psi$ has mean zero with respect to $\nu_\rho$ and we denote by $\mathrm{Var}(\psi)$ its variance.

 Then, for any $t>0$:
\begin{multline*}
\mathbb{E}_{\rho}\bigg[\Big(\int_{0}^t  \sum_{x\in\mathbb{Z}}v(x)\tau_x\psi(\eta_{sn^2})\big(\bar{\eta}_{sn^2}(x)-\vecleft{\eta}_{sn^2}^{\ell_0}(x)\big)\;ds\Big)^2\bigg] \\
\leq
{C(\rho)t}{\mathrm{Var}(\psi)}\bigg(\frac{\ell_0^2}{n}\|v\|_{2,n}^2+\frac{\ell_0 n^\beta}{n^2\alpha}\sum_{x \in \Lambda_{-\ell_0}^{\ell_0}} v^2(x)\bigg),
\end{multline*}
recalling that $\Lambda_{-\ell_0}^{\ell_0}=\{0,\ldots,\ell_0-1\}$.
\end{proposition}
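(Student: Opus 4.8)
The plan is to mirror the proof of Proposition \ref{prop:one-block}, simply reversing the orientation of the telescoping sum, so I only highlight the modifications. First I would apply \cite[Lemma 2.4]{KLO} to bound the expectation from above by $Ct$ times the squared $H_{-1}$-norm of the integrand, which via the variational formula equals
\[
Ct\sup_{f\in {\bf L}^2(\nu_\rho)}\bigg\{2\int \sum_{x\in\mathbb{Z}}v(x)\tau_x\psi(\eta)\big(\bar{\eta}(x)-\vecleft{\eta}^{\ell_0}(x)\big)f(\eta)\,\nu_\rho(d\eta)-n^2D_n(f)\bigg\}.
\]

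The key algebraic identity is now the left-oriented telescoping
\[
\bar{\eta}(x)-\vecleft{\eta}^{\ell_0}(x)=\frac{1}{\ell_0}\sum_{y=x-\ell_0}^{x-1}\sum_{z=y}^{x-1}\big(\bar{\eta}(z+1)-\bar{\eta}(z)\big),
\]
which writes the replacement as a weighted sum of elementary gradients over the bonds $(z,z+1)$ with $z\in\{x-\ell_0,\dots,x-1\}$ (the sign relative to Proposition \ref{prop:one-block} is immaterial, since Young's inequality only involves $(\bar\eta(z)-\bar\eta(z+1))^2$). Since the support of $\psi$ avoids $\{-\ell_0,\dots,0\}$, the translate $\tau_x\psi$ does not depend on any of the sites $\{x-\ell_0,\dots,x\}$ touched by these bonds, so the symmetrization step — writing the integral as twice its half and performing the change of variables $\eta\mapsto\eta^{z,z+1}$, under which both $\nu_\rho$ and $\tau_x\psi$ are invariant — goes through verbatim and produces the factor $f(\eta)-f(\eta^{z,z+1})$.

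Next I would split the sum over $x$ according to whether the slow bond $\{-1,0\}$ lies among the bonds appearing in the decomposition. A direct check shows that $z=-1$ occurs precisely when $0\le x\le \ell_0-1$, that is, exactly for $x\in\Lambda_{-\ell_0}^{\ell_0}=\{0,\dots,\ell_0-1\}$; for all other $x$ the slow bond is never crossed. On the portion $x\notin\Lambda_{-\ell_0}^{\ell_0}$ I would apply Young's inequality with weights $2A_x=\ell_0 v(x)/n^2$, exactly as in \eqref{error1}--\eqref{error2}, yielding the contribution $C(\rho)\mathrm{Var}(\psi)\frac{\ell_0^2}{n^2}\sum_x v^2(x)$ together with a Dirichlet remainder of the form $\frac{n^2}{\ell_0^2}\sum I_{z,z+1}(f)$, where $I_{z,z+1}(f)$ is as in \eqref{eq:def_i}.

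The delicate piece is the portion $x\in\Lambda_{-\ell_0}^{\ell_0}$, where the slow bond must be handled by inserting its Dirichlet-form coefficient $\Xi^n_{z,z+1}$ before applying Young's inequality, as in \eqref{eq:auxest2}--\eqref{eq:auxest22}. Here lies the only genuinely new computation and the main obstacle: the combinatorial count of how many times the slow bond is crossed. As $x$ runs over $\{0,\dots,\ell_0-1\}$, the bond $(-1,0)$ appears exactly $\ell_0-x$ times, decreasing from $\ell_0$ down to $1$ — the mirror image of the count $1,\dots,\ell_0-1$ found in the right-oriented case. Consequently the slow-bond term is controlled by
\[
\frac{C(\rho)}{\Xi^n_{-1,0}}\big[\ell_0\,v^2(0)+(\ell_0-1)\,v^2(1)+\cdots+v^2(\ell_0-1)\big]\le \frac{C(\rho)\,\ell_0}{\Xi^n_{-1,0}}\sum_{x\in\Lambda_{-\ell_0}^{\ell_0}}v^2(x),
\]
and since $1/\Xi^n_{-1,0}$ is of order $n^\beta/\alpha$ this yields the claimed factor $\frac{\ell_0 n^\beta}{n^2\alpha}\sum_{x\in\Lambda_{-\ell_0}^{\ell_0}}v^2(x)$, the remaining (non-slow) bonds contributing an extra $\frac{\ell_0^2}{n^2}$ term. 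Finally, collecting all Dirichlet remainders $\frac{n^2}{\ell_0^2}\sum_z \Xi^n_{z,z+1}I_{z,z+1}(f)$ over every $x$ and absorbing them through the estimate \eqref{eq:dirich1} of Lemma \ref{lemma:dir1} (applied with $\ell=\ell_0-1$) produces the stated bound.
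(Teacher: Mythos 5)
Your proposal is correct and follows exactly the route the paper takes (the paper itself only sketches this proof by deferring to Proposition \ref{prop:one-block}): same left-oriented telescoping identity, same identification of $\Lambda_{-\ell_0}^{\ell_0}=\{0,\dots,\ell_0-1\}$ as the set of $x$ whose bond decomposition crosses $\{-1,0\}$, and the correct multiplicity count $\ell_0-x$. The only (cosmetic) slip is the final citation: the left-oriented Dirichlet remainder $\frac{n^2}{\ell_0^2}\sum_{x}\sum_{y=x-\ell_0}^{x-1}\sum_{z=y}^{x-1}\Xi^n_{z,z+1}I_{z,z+1}(f)$ is absorbed via \eqref{eq:dirich2} with $\ell=\ell_0$, not \eqref{eq:dirich1} with $\ell=\ell_0-1$, though the two estimates are equivalent after relabeling.
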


\begin{remark}
It still holds that
\[\sum_{x \in \Lambda_{-\ell_0}^{\ell_0}} v^2(x) \le \sum_{x\neq -1} v^2(x).\]
\end{remark}

\begin{proof} Since the proof is very similar to the previous one, we only give the main arguments: 
we have to control
\begin{equation*}
 Ct\bigg\|\sum_{x\in\mathbb{Z}}v(x)\tau_x\psi(\eta)\big(\bar{\eta}(x)-\vecleft{\eta}^{\ell_0}(x)\big)\bigg\|_{-1}^2,\end{equation*}
and we notice that
\begin{equation*}
\bar{\eta}(x)-\vecleft{\eta}^{\ell_0}(x)=\frac{1}{\ell_0}\sum_{y=x-\ell_0}^{x-1}\sum_{z=y}^{x-1}(\bar{\eta}(z+1)-\bar{\eta}(z)).
\end{equation*}
% Now, we write the integral above as twice its half and in one of the terms we make the exchange $\eta$ to $\eta^{z,z+1}$, for which the measure $\nu_\rho$ is invariant.  Since the support of $\tau_x\psi$ does not intersect this set of points, it remains invariant, and we get
%\begin{equation*}
%\begin{split}
%&2\int\sum_{x\in\mathbb{Z}}v(x)\tau_x\psi(\eta)\Big\{\frac{1}{\ell_0}\sum_{y=x+2}^{x+\ell_0+1}\sum_{z=x+1}^{y-1}(\bar{\eta}(z)-\bar{\eta}(z+1))\Big\}f(\eta)\nu_\rho(d\eta)\\
%=\; &2\int\sum_{x\in\mathbb{Z}}v(x)\tau_x\psi(\eta)\Big\{\frac{1}{\ell_0}\sum_{y=x+2}^{x+\ell_0+1}\sum_{z=x+1}^{y-1}(\bar{\eta}(z+1)-\bar{\eta}(z))\Big\}f(\eta^{z,z+1})\nu_\rho(d\eta)\\
%=\; &\int\sum_{x\in\mathbb{Z}}v(x)\tau_x\psi(\eta)\Big\{\frac{1}{\ell_0}\sum_{y=x+2}^{x+\ell_0+1}\sum_{z=x+1}^{y-1}(\bar{\eta}(z)-\bar{\eta}(z+1))\Big\}(f(\eta)-f(\eta^{z,z+1}))\nu_\rho(d\eta).
%\end{split}
%\end{equation*}
%At this point
 As before, when we split the make the exchange $\eta$ to $\eta^{z,z+1}$ (for which  $\nu_\rho$ and $\tau_x\psi$  remain invariant) in the integrals which are involved,  we have to be careful and to split the sum over $x$, according to the points for which the slow bond $\{-1,0\}$ belongs to the following set of bonds \begin{equation*}\widetilde{\mathcal{E}}_x^{\ell_0}:=\big\{(z,z+1):\,y\leq z\leq x-1\, \text{ and }\, x-\ell_0\leq y\leq x-1\big\}.\end{equation*} Here, recall that $\Lambda_{-\ell_0}^{\ell_0}=\{0,\ldots,\ell_0-1\}$.
A simple computation shows that for $x\in\Lambda_{-\ell_0}^{\ell_0}$ the slow bond belongs to the set $\widetilde{\mathcal{E}}_x^{\ell_0}$, otherwise, it does not.
Afterwards, the argument is straightforwardly identical to the proof of the previous proposition. One can conclude that the integral in the statement of Proposition \ref{prop:one-block-left} is bounded from above by
\begin{equation*}
C(\rho)\mathrm{Var}(\psi)\bigg(\frac{\ell_0^2}{n}\|v\|_{2,n}^2+\frac{\ell_0 n^\beta}{n^2\alpha}\sum_{x \in \Lambda_{-\ell_0}^{\ell_0}} v^2(x)\bigg)
+\frac{n^2}{\ell_0^2}\sum_{x\in\mathbb{Z}}\sum_{y=x-\ell_0}^{x-1}\sum_{z=y}^{x-1}\Xi^n_{z,z+1}I_{z,z+1}(f).
\end{equation*}
Now, by Lemma \ref{lemma:dir1}, precisely \eqref{eq:dirich2}, the result follows. 
\end{proof}

By Proposition \ref{prop:one-block-left}, the terms \eqref{eq:term1bis} and \eqref{eq:term4} are controlled using $\tau_x\psi(\eta)=\vec\eta^{\ell}(x+1)$ for some $\ell \in \bb N$.
Finally, the sum of \eqref{eq:term1} \eqref{eq:term1bis} and \eqref{eq:term4} gives a total error contribution bounded by
\begin{equation*}
Ct\Big( 1+\frac{1}{\ell_0}+\frac{1}{L}  \Big) \bigg(\frac{\ell_0^2}{n} \big\|v\big\|_{2,n}^2 + \frac{\ell_0 n^\beta}{n^2 \alpha} \sum_{x\neq -1} v^2(x) \bigg).
\end{equation*}
Since $\ell_0$ is supposed to be independent of $n$, this bound can be simplified as
\begin{equation*}
Ct \bigg(\frac{1}{n} \big\|v\big\|_{2,n}^2 + \frac{n^\beta}{n^2 \alpha} \sum_{x\neq -1} v^2(x) \bigg).
\end{equation*}

\subsection{Estimate of \eqref{eq:term3}: Multi-scale analysis}
\label{ssec:multi}
The idea behind the estimate of \eqref{eq:term3} is as follows: instead of replacing $\vec\eta^{\ell_0}(x)$ by $\vec\eta^L(x)$ in one step, we do it gradually, by doubling the size of the box of size $\ell_0$ at each step. For that purpose, let $\ell_{k+1}=2\ell_k$ and assume first that $L=2^M\ell_0$ for some $M\in\bb N$.
Then, rewrite \eqref{eq:term3} as 
\begin{align}
\vecleft\eta^{\ell_0}(x) \big(\vec\eta^{\ell_0}(x)-\vec\eta^L(x)\big) 
& = \sum_{k=0}^{M-1} \vecleft\eta^{\ell_k}(x) \big(\vec\eta^{\ell_k}(x)-\vec\eta^{\ell_{k+1}}(x)\big) \label{eq:dec1}\\
&  + \sum_{k=0}^{M-2} \vec\eta^{\ell_{k+1}}(x) \big(\vecleft\eta^{\ell_k}(x)-\vecleft\eta^{\ell_{k+1}}(x)\big) \label{eq:dec2}\\
& + \vec\eta^L(x) \big(\vecleft\eta^{\ell_{M-1}}(x)-\vecleft\eta^{\ell_0}(x)\big) \vphantom{\sum_{k}^M}. \label{eq:dec3}
\end{align}
We start with the estimate of the terms that appear in sums \eqref{eq:dec1} and \eqref{eq:dec2}.

\begin{proposition}[Doubling the box]\label{doub box}
Let $\ell_k\in\mathbb{N}$, $\ell_{k+1}=2\ell_k$ and $\psi:\Omega\to\mathbb{R}$ a local function whose support does not intersect the set of points $\{1,\ldots,\ell_{k+1}\}$.  In the same way, let  $\widetilde\psi:\Omega\to\bb R$ be a local function whose support does not intersect the set of points $\{-\ell_{k+1},\ldots,-1\}$.We assume that $\psi$ (resp. $\widetilde\psi$) have mean zero with respect to $\nu_\rho$ and we denote by $\mathrm{Var}(\psi)$ (resp. $\mathrm{Var}(\widetilde\psi)$) its variance.

Then, for any $t>0$:
\begin{multline}
\mathbb{E}_{\rho}\bigg[\Big(\int_{0}^t  \sum_{x\in\mathbb{Z}}v(x)\tau_x\psi(\eta_{sn^2})\big(\vec{\eta}_{sn^2}^{\ell_k}(x)-\vec{\eta}_{sn^2}^{\ell_{k+1}}(x)\big)\;ds\Big)^2\Big]
\\ \leq
 C(\rho)t\mathrm{Var}(\psi)\bigg( \frac{\ell_k^2}{n}\|v\|_{2,n}^2+\frac{n^\beta\ell_k}{n^2\alpha}\sum_{x\neq -1}v^2(x)\bigg).\label{eq:doub1}
\end{multline}
\begin{multline}
\mathbb{E}_{\rho}\bigg[\Big(\int_{0}^t ds \sum_{x\in\mathbb{Z}}v(x)\tau_x\widetilde\psi(\eta_{sn^2})\big(\vecleft{\eta}_{sn^2}^{\ell_k}(x)-\vecleft{\eta}_{sn^2}^{\ell_{k+1}}(x)\big)\;ds\Big)^2\bigg]
\\\leq
 C(\rho)t\mathrm{Var}(\widetilde\psi)\bigg( \frac{\ell_k^2}{n}\|v\|_{2,n}^2+\frac{n^\beta\ell_k}{n^2\alpha}\sum_{x\neq -1}v^2(x)\bigg).\label{eq:doub2}
\end{multline}
\end{proposition}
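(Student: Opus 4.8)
The plan is to reproduce, almost verbatim, the proof of the one-block estimate (Proposition~\ref{prop:one-block}), the only genuinely new ingredients being a telescoping identity adapted to the doubling of the box and a slightly more involved count of slow-bond crossings. I will explain the argument for \eqref{eq:doub1}; the bound \eqref{eq:doub2} follows by the symmetric argument, replacing right averages by left averages.

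First I would record the elementary identity
\begin{equation*}
\vec\eta^{\ell_k}(x)-\vec\eta^{\ell_{k+1}}(x)=\tfrac12\big(\vec\eta^{\ell_k}(x)-\vec\eta^{\ell_k}(x+\ell_k)\big)=\frac{1}{2\ell_k}\sum_{j=1}^{\ell_k}\sum_{z=x+j}^{x+\ell_k+j-1}\big(\bar\eta(z)-\bar\eta(z+1)\big),
\end{equation*}
which expresses the difference of averages as a weighted sum of elementary gradients over bonds $(z,z+1)$ with $z\in\{x+1,\dots,x+\ell_{k+1}-1\}$. Since all sites involved lie in $\{x+1,\dots,x+\ell_{k+1}\}$, the hypothesis on the support of $\psi$ ensures that $\tau_x\psi$ is left invariant by every exchange $\eta\mapsto\eta^{z,z+1}$ used below.

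Then, exactly as in Proposition~\ref{prop:one-block}, I would bound the expectation by $Ct$ times the squared $H_{-1}$-norm of the integrand via \cite[Lemma 2.4]{KLO}, express the latter by its variational formula, and symmetrize each gradient by the measure-preserving change of variables $\eta\mapsto\eta^{z,z+1}$. For an arbitrary family $(A_x)_{x\in\bb Z}$ of positive reals, Young's inequality then produces a ``variance'' part and a ``Dirichlet'' part built from $I_{z,z+1}(f)$ of \eqref{eq:def_i}. The essential step is to split the sum over $x$ according to whether the slow bond $(-1,0)$ belongs to the family of bonds crossed in the double sum above. Choosing $2A_x=\ell_k v(x)/n^2$, the bonds with $z\neq-1$ contribute (by independence and $\mathrm{Var}\big((\bar\eta(z)-\bar\eta(z+1))^2\big)\le C(\rho)$) a term of order $\frac{\ell_k^2}{n}\|v\|_{2,n}^2$, exactly as in \eqref{error1}, while the slow-bond contributions carry the extra denominator $\Xi^n_{-1,0}$ (of order $\alpha n^{-\beta}$, see \eqref{eq:zeta}), as in \eqref{expextra}.

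The main obstacle, to be handled with care, is the multiplicity of slow-bond crossings as $x$ varies. For a fixed $x$, the bond $(-1,0)$ occurs in the inner double sum once for each $j\in\{1,\dots,\ell_k\}$ with $x+j\le -1\le x+\ell_k+j-1$; this multiplicity is a triangular function of $x$, supported on a window of $x$ of width $O(\ell_k)$ and bounded by $\ell_k$. Summing the multiplicity against $v^2(x)/\Xi^n_{-1,0}$ over this window --- exactly the computation of \eqref{error3} --- yields a contribution bounded by $C(\rho)\mathrm{Var}(\psi)\frac{n^\beta\ell_k}{n^2\alpha}\sum_{x\neq-1}v^2(x)$, the linear (rather than quadratic) dependence on $\ell_k$ being precisely what the doubling structure provides. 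Finally, the same choice of $A_x$ leaves a residual Dirichlet term of the form $\frac{n^2}{\ell_k^2}\sum_{x}\sum_{j}\sum_{z}\Xi^n_{z,z+1}I_{z,z+1}(f)$, which is absorbed by the $-n^2D_n(f)$ appearing in the variational formula through the Dirichlet-form estimate of Lemma~\ref{lemma:dir1} --- namely \eqref{eq:dirich1} applied with a box of size comparable to $\ell_{k+1}$, the factor $4$ between $\ell_k$ and $\ell_{k+1}$ being harmless. Collecting the two surviving contributions gives \eqref{eq:doub1}; the left-averaged identity $\vecleft\eta^{\ell_k}(x)-\vecleft\eta^{\ell_{k+1}}(x)=\tfrac12\big(\vecleft\eta^{\ell_k}(x)-\vecleft\eta^{\ell_k}(x-\ell_k)\big)$ together with \eqref{eq:dirich2} yields \eqref{eq:doub2} in the same way.
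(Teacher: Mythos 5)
Your proposal is correct and follows essentially the same route as the paper: the same telescoping identity $\vec\eta^{\ell_k}(x)-\vec\eta^{\ell_{k+1}}(x)=\frac{1}{2\ell_k}\sum_{j}(\bar\eta(x+j)-\bar\eta(x+j+\ell_k))$, the same $H_{-1}$/variational/Young machinery, and the same careful count of slow-bond crossings. The one organizational difference is that the paper first uses the convexity inequality to pull the sum over the telescoping index $j$ \emph{outside} the $H_{-1}$-norm (paying a factor $\ell_k$), so that for each fixed $j$ the slow bond appears at most once per $x$ over a window $\Lambda_j^{\ell_k}$ of width $\ell_k$, whereas you keep the full double sum inside a single $H_{-1}$-norm and count the triangular multiplicity (bounded by $\ell_k$, supported on a window of width $O(\ell_k)$) directly; both bookkeepings yield the same bound $C\,\mathrm{Var}(\psi)\,\frac{n^\beta\ell_k}{n^2\alpha}\sum_{x\neq-1}v^2(x)$. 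One small correction: the residual Dirichlet term you obtain is exactly $\frac{n^2}{\ell_k^2}\sum_x\sum_{j=1}^{\ell_k}\sum_{z=x+j}^{x+j+\ell_k-1}\Xi^n_{z,z+1}I_{z,z+1}(f)$ up to a constant, which is the content of \eqref{eq:dirich3} with $\ell=\ell_k$ (not \eqref{eq:dirich1} with box $\ell_{k+1}$); citing the right part of Lemma \ref{lemma:dir1} removes any need to argue that a stray factor is ``harmless'' (though it could also be absorbed by rescaling your $A_x$).
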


\begin{proof}
We only prove \eqref{eq:doub1}. The same argument can easily be written for \eqref{eq:doub2}, in the same spirit as for Proposition \ref{prop:one-block-left}.  First, we notice that
\begin{equation*}
\vec{\eta}^{\ell_k}(x)-\vec{\eta}^{\ell_{k+1}}(x)=\frac{1}{2\ell_{k}}\sum_{y=x+1}^{x+\ell_k}(\bar{\eta}(y)-\bar{\eta}(y+\ell_{k})).
\end{equation*}
By \cite[Lemma 2.4]{KLO}, a change of variables $y \mapsto y-x$ and the convexity inequality 
\[(a_1+\cdots+a_\ell)^2 \le \ell (a_1^2+\cdots+ a_\ell^2), \] the expectation in the left-hand side of \eqref{eq:doub1} is bounded from above by
\begin{equation}\label{eq:init}
Ct\ell_k\sum_{y=1}^{\ell_k}\bigg\|\sum_{x\in\mathbb{Z}}v(x)\tau_x\psi(\eta)\frac{1}{2\ell_{k}}(\bar{\eta}(y+x)-\bar{\eta}(y+x+\ell_{k}))\bigg\|_{-1}^2.\end{equation}
By the variational formula for the $H_{-1}$ norm, the quantity inside the sum is equal to
\[
\sup_{f\in {\bf L}^2(\nu_\rho)}\bigg\{2\int \sum_{x\in\mathbb{Z}}v(x)\tau_x\psi(\eta)\frac{1}{2\ell_{k}}(\bar{\eta}(y+x)-\bar{\eta}(y+x+\ell_{k}))f(\eta)\nu_\rho(d\eta)-n^2D_n(f)\bigg\}.
\]
As above, we write
\begin{equation*}
\bar{\eta}(y+x)-\bar{\eta}(y+x+\ell_k)=\sum_{z=y+x}^{y+x+\ell_k-1}(\bar{\eta}(z)-\bar{\eta}(z+1)),
\end{equation*}
and we write the integral in the variational formula above as twice its half  and in one of the terms we make the exchange $\eta$ to $\eta^{z,z+1}$, for which the measure $\nu_\rho$ is invariant. By the imposed conditions on the support of $\psi$ we get that
\begin{equation*}
\begin{split}
&2\int \sum_{x\in\mathbb{Z}}v(x)\tau_x\psi(\eta)\frac{1}{2\ell_k}(\bar{\eta}(y+x)-\bar{\eta}(y+x+\ell_k))f(\eta)\nu_\rho(d\eta)\\
=&\int \sum_{x\in\mathbb{Z}}v(x)\tau_x\psi(\eta)\frac{1}{2\ell_k}\sum_{z=y+x}^{y+x+\ell_k-1}(\bar{\eta}(z)-\bar{\eta}(z+1))(f(\eta)-f(\eta^{z,z+1}))\nu_\rho(d\eta).
\end{split}
\end{equation*}
At this point we have to split the sum in $x$ above, according to the points for which the slow bond $(-1,0)$ belongs to the set of bonds \[\tilde{\mathcal{E}}_{x,y}^{\ell_k}:=\big\{(z,z+1):\,y+x\leq z\leq y+x+\ell_k-1\big\}.\]
A simple computation shows that for $x\in\Lambda_{y}^{\ell_k}=\{-\ell_k-y,\ldots,-y-1\}$ the slow bond belongs to the set $\tilde{\mathcal{E}}_{x,y}^{\ell_k}$, otherwise, it does not.
From this, we can rewrite last integral as
\begin{align}
&\int \sum_{x\notin\Lambda_{y}^{\ell_k}}v(x)\tau_x\psi(\eta)\frac{1}{2\ell_k}\sum_{z=y+x}^{y+x+\ell_k-1}(\bar{\eta}(z)-\bar{\eta}(z+1))(f(\eta)-f(\eta^{z,z+1}))\nu_\rho(d\eta)\label{eq:auxest3}\\
+&\int \sum_{x\in\Lambda_{y}^{\ell_k}}v(x)\tau_x\psi(\eta)\frac{1}{2\ell_k}\sum_{z=y+x}^{y+x+\ell_k-1}(\bar{\eta}(z)-\bar{\eta}(z+1))(f(\eta)-f(\eta^{z,z+1}))\nu_\rho(d\eta).\label{eq:auxest33}
\end{align}
By Young's inequality we bound the first expression \eqref{eq:auxest3} above by
\begin{equation*}
\begin{split}
& \sum_{x\notin\Lambda_{y}^{\ell_k}}\sum_{z=y+x}^{y+x+\ell_k-1}\frac{v(x)A_x}{4\ell_k}\int (\tau_x\psi(\eta))^2(\bar{\eta}(z)-\bar{\eta}(z+1))^2\nu_\rho(d\eta)\\
+& \sum_{x\notin\Lambda_{y}^{\ell_k}}\sum_{z=y+x}^{y+x+\ell_k-1}\frac{v(x)}{4\ell_k A_x}\int(f(\eta)-f(\eta^{z,z+1}))^2\nu_\rho(d\eta).
\end{split}
\end{equation*}
By taking $4A_x=v(x)/n^2$ and doing similar estimates as above, we bound last expression by
\begin{equation}\label{error2.1}
\frac{C(\rho)}{n^2}\mathrm{Var}(\psi)\sum_{x\notin\Lambda_{y}^\ell}v^2(x)+\frac{n^2}{\ell}\sum_{x\notin\Lambda_{y}^\ell}\sum_{z=y+x}^{y+x+\ell-1}I_{z,z+1}(f).
\end{equation}
To bound the second term \eqref{eq:auxest33}, we use again Young's inequality and we bound it by
\begin{equation*}
\begin{split}
& \sum_{x\in\Lambda_{y}^{\ell_k}}\sum_{z=y+x}^{y+x+\ell_k-1}\frac{v(x)A_x}{4\ell_k\Xi^n_{z,z+1}}\int (\tau_x\psi(\eta))^2(\bar{\eta}(z)-\bar{\eta}(z+1))^2\nu_\rho(d\eta)\\
+&\sum_{x\in\Lambda_{y}^{\ell_k}}\sum_{z=y+x}^{y+x+\ell_k-1}\frac{v(x)\Xi^n_{z,z+1}}{4\ell_k A_x}\int(f(\eta)-f(\eta^{z,z+1})^2\nu_\rho(d\eta).
\end{split}
\end{equation*}
By taking $4A_x=v(x)/n^2$ and repeating the same arguments as in the previous lemma we bound last expression by
\begin{equation}\label{error2.2}
 C(\rho)\mathrm{Var}(\psi)\Big(\frac{n^\beta}{n^2\alpha \ell_k }+\frac{1}{n^2}\Big)\sum_{x\in\Lambda_{y}^{\ell_k}}v^2(x)+\frac{n^2}{\ell_k} \sum_{x\in\Lambda_{y}^{\ell_k}}\sum_{z=y+x}^{y+x+\ell_k-1}\Xi^n_{z,z+1}I_{z,z+1}(f).
\end{equation}
Putting together \eqref{error2.1} and \eqref{error2.2} we get the bound
\begin{equation*}
C(\rho)\mathrm{Var}(\psi)\bigg( \frac{1}{n}\|v\|_{2,n}^2+\frac{n^\beta}{n^2\alpha\ell_k}\sum_{x \in \Lambda_{y}^{\ell_k}}v^2(x)\bigg)
+\frac{n^2}{\ell_k} \sum_{x\in\mathbb{Z}}\sum_{z=y+x}^{y+x+\ell_k-1}\Xi^n_{z,z+1}I_{z,z+1}(f).
\end{equation*} 
Now, summing over  $y \in \{1,\ldots,\ell_k\},$ recalling \eqref{eq:init} and invoking Lemma \ref{lemma:dir1}, \eqref{eq:dirich3}, the proof ends. \end{proof}

Finally, last term \eqref{eq:dec3} is treated similarly as in Proposition \ref{prop:one-block-left}, as follows:

\begin{proposition} 
\label{prop:one-block3}
For any $\ell_0, L,M\in\bb {N}, $ such that $M\ge 1$, and any $t>0$:
\begin{multline}
\mathbb{E}_{\rho}\bigg[\Big(\int_{0}^t  \sum_{x\in\mathbb{Z}}v(x)\vec{\eta}^L_{sn^2}(x)\big(\vecleft{\eta}^{2^{M-1}\ell_0}_{sn^2}(x)-\vecleft{\eta}_{sn^2}^{\ell_0}(x)\big)\Big)^2\;ds\bigg]\\ \leq
C(\rho)t\bigg( \frac{\ell_0^2}{Ln}\|v\|_{2,n}^2+\frac{n^\beta\ell_0}{n^2\alpha L}\sum_{x\neq -1}v^2(x)\bigg).\label{eq:bigerror2}
\end{multline}
\end{proposition}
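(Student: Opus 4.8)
The plan is to mimic the proof of Proposition~\ref{prop:one-block-left}, replacing the local function $\tau_x\psi$ by the empirical average $\vec\eta^L(x)$. The key structural fact is that $\vec\eta^L(x)$ is supported on $\{x+1,\dots,x+L\}$, strictly to the right of $x$, while the increment $\vecleft\eta^{2^{M-1}\ell_0}(x)-\vecleft\eta^{\ell_0}(x)$ only involves sites strictly to the left of $x$. Therefore the two factors are independent under $\nu_\rho$, and in particular $\int(\vec\eta^L(x))^2\,d\nu_\rho=\chi(\rho)/L$. This is precisely the mechanism that produces the extra factor $1/L$ in \eqref{eq:bigerror2}: morally, $\vec\eta^L(x)$ plays the role of a function $\psi$ with $\mathrm{Var}(\psi)=\chi(\rho)/L$, and substituting this value into the bound of Proposition~\ref{prop:one-block-left} yields exactly the right-hand side of \eqref{eq:bigerror2}.

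First I would apply \cite[Lemma 2.4]{KLO} to bound the expectation by $Ct$ times the $H_{-1}$-norm of the function $\sum_{x}v(x)\vec\eta^L(x)\big(\vecleft\eta^{2^{M-1}\ell_0}(x)-\vecleft\eta^{\ell_0}(x)\big)$, and I would expand this norm through the variational formula involving the Dirichlet form $D_n(f)$. Next I would write the difference of left averages as a weighted sum of elementary gradients $\bar\eta(z)-\bar\eta(z+1)$, in the same spirit as the identity used for $\bar\eta(x)-\vecleft\eta^{\ell_0}(x)$ in Proposition~\ref{prop:one-block-left}. Writing the integral as twice its half and exchanging $\eta\mapsto\eta^{z,z+1}$ — a move under which both $\nu_\rho$ and the factor $\vec\eta^L(x)$ stay invariant, precisely because the bond $(z,z+1)$ sits to the left of $x$ and so outside the support of $\vec\eta^L(x)$ — reconstructs the differences $f(\eta)-f(\eta^{z,z+1})$ needed to recover $D_n(f)$.

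Then I would apply Young's inequality with well-chosen multipliers $A_x$, after splitting the sum over $x$ according to whether or not the slow bond $\{-1,0\}$ belongs to the set of bonds being summed, exactly as the sets $\Lambda_y^{\ell}$ are used in Propositions~\ref{prop:one-block-left} and~\ref{doub box}. On the ordinary bonds the Dirichlet coefficient is $1/2$, whereas on the slow bond one has to carry the coefficients $\Xi^n_{z,z+1}$ of the Dirichlet form \eqref{eq:dirichlet}; this is what generates the term $\tfrac{n^\beta\ell_0}{n^2\alpha L}\sum_{x\neq -1}v^2(x)$. In every integral of variance type I would use independence to factor $\int(\vec\eta^L(x))^2(\bar\eta(z)-\bar\eta(z+1))^2\,d\nu_\rho=\tfrac{\chi(\rho)}{L}\,2\chi(\rho)$, which extracts the gain $1/L$, while the calibration $2A_x\propto \ell_0 v(x)/n^2$ balances the two sides and leaves a residual Dirichlet contribution $\tfrac{n^2}{\ell_0^2}\sum_{x}\sum_{z}\Xi^n_{z,z+1}I_{z,z+1}(f)$. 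This residue is absorbed into $n^2D_n(f)$ by invoking Lemma~\ref{lemma:dir1}, namely \eqref{eq:dirich2}.

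The delicate point — and the one I expect to be the main obstacle — is the bookkeeping of slow-bond crossings combined with the sharp dependence on the box sizes. One must count, as in the earlier one-block arguments, how many of the gradients involved actually straddle the bond $\{-1,0\}$, and organise the multipliers so that the final estimate depends on the small scale $\ell_0$ in the announced way rather than on the larger scale $2^{M-1}\ell_0$; this is exactly where the localisation coming from the independent factor $\vec\eta^L(x)$ and the careful choice of $A_x$ must be exploited. Once this counting is carried out, the remaining manipulations are identical, mutatis mutandis, to those of Proposition~\ref{prop:one-block-left}, and collecting the variance and Dirichlet contributions gives \eqref{eq:bigerror2}.
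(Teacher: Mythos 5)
Your strategy is the one the paper itself invokes: the authors omit the proof of Proposition~\ref{prop:one-block3} with the remark that one repeats the argument of Proposition~\ref{prop:one-block-left} with $\psi(\eta)=\vec\eta^L(0)$, whose support is disjoint from the left boxes. All the structural steps you list (the $H_{-1}$ variational formula from \cite[Lemma 2.4]{KLO}, telescoping the difference of left averages into nearest-neighbour gradients, invariance of $\nu_\rho$ and of the factor $\vec\eta^L(x)$ under the exchanges $\eta\mapsto\eta^{z,z+1}$ because those bonds lie to the left of $x$, Young's inequality with the slow-bond bookkeeping via the sets $\Lambda_y^\ell$ and the coefficients $\Xi^n_{z,z+1}$, absorption of the residue via Lemma~\ref{lemma:dir1}) are correct and are exactly what the paper has in mind; the identification $\mathrm{Var}(\vec\eta^L)=\chi(\rho)/L$ as the source of the prefactor $1/L$ is also right.

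There is, however, a genuine gap at precisely the point you flag in your last paragraph, and it is not one that a ``careful choice of $A_x$'' can close. The bound of Proposition~\ref{prop:one-block-left} carries the factors $\ell_0^2/n$ and $\ell_0 n^\beta/(n^2\alpha)$ because the quantity being telescoped, $\bar\eta(x)-\vecleft\eta^{\ell_0}(x)$, spans a box of size $\ell_0$: the gradient decomposition has $O(\ell_0^2)$ terms spread over $\ell_0$ bonds, and the slow bond is crossed at most $\ell_0$ times. In Proposition~\ref{prop:one-block3} the quantity $\vecleft\eta^{2^{M-1}\ell_0}(x)-\vecleft\eta^{\ell_0}(x)$ spans a box of size $2^{M-1}\ell_0$, so running the identical machinery produces $(2^{M-1}\ell_0)^2/(Ln)$ and $2^{M-1}\ell_0\, n^\beta/(n^2\alpha L)$ in place of $\ell_0^2/(Ln)$ and $\ell_0\, n^\beta/(n^2\alpha L)$. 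The independence of $\vec\eta^L(x)$ from the left gradients buys exactly the variance prefactor $1/L$ and nothing more; it does not shrink the range of bonds that must be crossed. Indeed, already for a single summand the $H_{-1}$ norm of $\vecleft\eta^{2^{M-1}\ell_0}(0)-\vecleft\eta^{\ell_0}(0)$ scales with the \emph{large} box (the partial sums of its coefficient sequence are of order one over a region of length $2^{M-1}\ell_0$), so no calibration of multipliers restores an $\ell_0$-only dependence. What your argument honestly delivers is \eqref{eq:bigerror2} with $\ell_0$ replaced by $2^{M-1}\ell_0\le L$, i.e.\ a bound of order $t\,(L/n)\|v\|_{2,n}^2+t\,n^\beta(\alpha n^2)^{-1}\sum_{x\neq-1}v^2(x)$. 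This weaker estimate is all that is actually used: in Proposition~\ref{prop:final-size} it is absorbed by the terms $L/n$ and $(\log_2 L)^2 n^\beta/(\alpha n^2)$ already present in \eqref{eq:bigerror3}, so nothing downstream is affected; but as a proof of the literal inequality \eqref{eq:bigerror2} your argument (like the paper's one-line justification) does not close the gap you yourself identified.
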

\begin{proof}
We omit its proof since the argument is the same as for Proposition \ref{prop:one-block-left}. Let us notice that the support of $\psi(\eta):=\vec\eta^L(0)$ does not intersect $\{-2^{M-1}\ell_0,\ldots,-1\}$, which is enough to make the proof work.
\end{proof}

Putting Proposition \ref{doub box} and \ref{prop:one-block3} together,  we now can reach the box of size $L \geq \ell_0$.

\begin{proposition}
\label{prop:final-size}
For any $\ell_0\leq L\in\bb {N}$ and  $t>0$:
\begin{multline}
\mathbb{E}_{\rho}\bigg[\Big(\int_{0}^t  \sum_{x\in\mathbb{Z}}v(x)\vecleft{\eta}^{\ell_0}_{sn^2}(x)\big(\vec{\eta}_{sn^2}^{\ell_0}(x)-\vec{\eta}_{sn^2}^{L}(x)\big)\;ds\Big)^2\bigg]\\
\leq C(\rho)t\bigg( \Big\{\frac{\ell_0^2}{Ln}+\frac{L}{n}   \Big\}\|v\|_{2,n}^2+\frac{n^\beta}{n^2\; \alpha}\Big\{\frac{\ell_0}{L}+(\log_2(L))^2\Big\}\sum_{x\neq -1}v^2(x) \bigg). \label{eq:bigerror3}
\end{multline}
\end{proposition}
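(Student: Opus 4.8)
The plan is to exploit the telescoping decomposition of \eqref{eq:term3} into the three sums \eqref{eq:dec1}, \eqref{eq:dec2} and \eqref{eq:dec3} that precede the statement, assuming first that $L=2^M\ell_0$ with $\ell_k=2^k\ell_0$. Since the three pieces are added, by the convexity inequality $(a+b+c)^2\le 3(a^2+b^2+c^2)$ it suffices to estimate the $\mathbf{L}^2(\bb P_\rho)$-norm of each one separately. For the two sums \eqref{eq:dec1} and \eqref{eq:dec2}, which are themselves sums over the scales $k$, I would apply Minkowski's inequality to bound the $\mathbf{L}^2$-norm of the time integral by the sum over $k$ of the $\mathbf{L}^2$-norms of the individual summands, and then invoke Proposition \ref{doub box} scale by scale.

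Concretely, for the generic term of \eqref{eq:dec1} I would use \eqref{eq:doub1} with $\tau_x\psi=\vecleft\eta^{\ell_k}(x)$; the associated local function has support in $\{-\ell_k,\dots,-1\}$, which is disjoint from $\{1,\dots,\ell_{k+1}\}$, and variance $\mathrm{Var}(\vecleft\eta^{\ell_k})=\chi(\rho)/\ell_k$. The crucial point is that this variance exactly cancels one factor $\ell_k$ on the right-hand side of \eqref{eq:doub1}, so that the $k$-th summand has squared $\mathbf{L}^2$-norm bounded by $C(\rho)t\big(\tfrac{\ell_k}{n}\|v\|_{2,n}^2+\tfrac{n^\beta}{n^2\alpha}\sum_{x\neq-1}v^2(x)\big)$. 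Taking square roots, using $\sqrt{a+b}\le\sqrt a+\sqrt b$, and summing the geometric series $\sum_{k=0}^{M-1}\sqrt{\ell_k}=\sqrt{\ell_0}\sum_{k} 2^{k/2}\le C\sqrt{2^M\ell_0}=C\sqrt L$, while the scale-independent contribution is summed trivially over the $M\le\log_2(L)$ scales, I would obtain, after squaring again, the bound $C(\rho)t\big(\tfrac{L}{n}\|v\|_{2,n}^2+(\log_2 L)^2\tfrac{n^\beta}{n^2\alpha}\sum_{x\neq-1}v^2(x)\big)$. The sum \eqref{eq:dec2} is handled identically, now with $\tau_x\widetilde\psi=\vec\eta^{\ell_{k+1}}(x)$ (support in $\{1,\dots,\ell_{k+1}\}$, variance $\chi(\rho)/\ell_{k+1}$) and \eqref{eq:doub2}, giving a bound of the same order.

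The last piece \eqref{eq:dec3} is exactly the quantity estimated in Proposition \ref{prop:one-block3}, since $\ell_{M-1}=2^{M-1}\ell_0$; applying \eqref{eq:bigerror2} directly produces the remaining contributions $\tfrac{\ell_0^2}{Ln}\|v\|_{2,n}^2$ and $\tfrac{n^\beta\ell_0}{n^2\alpha L}\sum_{x\neq-1}v^2(x)$. Collecting the three estimates then yields precisely \eqref{eq:bigerror3}. For a general $L\ge\ell_0$ that is not a dyadic multiple of $\ell_0$, I would choose $M=\lfloor\log_2(L/\ell_0)\rfloor$ and absorb the remaining passage $\vec\eta^{2^M\ell_0}(x)\mapsto\vec\eta^{L}(x)$ through one additional application of the one-block-type estimate of Proposition \ref{prop:one-block-left}, which affects none of the orders since $2^M\ell_0\le L<2^{M+1}\ell_0$.

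The genuine analytic difficulty, namely the variational $H_{-1}$ bound, the careful splitting of the sum over $x$ at the slow bond and the Dirichlet-form estimates of Lemma \ref{lemma:dir1}, has already been absorbed into Propositions \ref{doub box} and \ref{prop:one-block3}. Hence the hard part here is only the bookkeeping of the multi-scale summation: one must check that the geometric growth $\sqrt{\ell_k}=2^{k/2}\sqrt{\ell_0}$ sums to order $\sqrt L$, producing the $\tfrac{L}{n}$ term, whereas the scale-independent error accumulates merely the factor $M\le\log_2(L)$, producing the $(\log_2 L)^2$ term; the prefactor $\chi(\rho)/\ell_k$ coming from the variance is precisely what renders these sums convergent to the correct orders.
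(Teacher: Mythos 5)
Your proposal is correct and follows essentially the same route as the paper: the telescoping decomposition \eqref{eq:dec1}--\eqref{eq:dec3}, convexity plus Minkowski, Proposition \ref{doub box} applied scale by scale with $\mathrm{Var}(\vecleft\eta^{\ell_k})\sim\chi(\rho)/\ell_k$ cancelling one factor of $\ell_k$, the geometric summation $\sum_k 2^{k/2}\sqrt{\ell_0}\lesssim\sqrt L$ for the $L/n$ term and the factor $M\le\log_2 L$ for the $(\log_2 L)^2$ term, and Proposition \ref{prop:one-block3} for the last piece. The paper handles the non-dyadic case by the same loose remark you make, so there is nothing to add.
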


\begin{remark}
Since $\ell_0$ is supposed to be independent of $n$, \eqref{eq:bigerror3} is also bounded by
\[
Ct \bigg(\frac{L}{n}\|v\|^2_{2,n} +\frac{n^\beta\;(\log_2(L))^2}{\alpha n^2}\sum_{x \neq -1} v^2(x)  \bigg).
\]
\end{remark}

\begin{proof}
We start by showing the result in the case where $L=\ell_M=2^M\ell_0$ for $M$ a positive integer.
We use the decomposition \eqref{eq:dec1}+\eqref{eq:dec2}+\eqref{eq:dec3}.

By the convexity inequality $(a+b+c)^2 \leq 3(a^2+b^2+c^2)$ and using the Minkowski's inequality twice, the expectation in the statement of the proposition is bounded from above by
\begin{align*}
&3\;\bigg\{\sum_{k=0}^{M-1}\bigg(\mathbb{E}_{\rho}\bigg[\Big(\int_{0}^t  \sum_{x\in\mathbb{Z}}v(x)\vecleft{\eta}^{\ell_k}_{sn^2}(x)\Big\{\vec{\eta}_{sn^2}^{\ell_k}(x)-\vec{\eta}_{sn^2}^{\ell_{k+1}}(x)\Big\}\; ds\Big)^2\bigg]\bigg)^{1/2}\bigg\}^2\\
& + 3\;\bigg\{\sum_{k=0}^{M-2}\bigg(\mathbb{E}_{\rho}\bigg[\Big(\int_{0}^t  \sum_{x\in\mathbb{Z}}v(x)\vec{\eta}^{\ell_{k+1}}_{sn^2}(x)\Big\{\vecleft{\eta}_{sn^2}^{\ell_k}(x)-\vecleft{\eta}_{sn^2}^{\ell_{k+1}}(x)\Big\}\; ds\Big)^2\bigg]\bigg)^{1/2}\bigg\}^2 \\
& +3\;\mathbb{E}_{\rho}\bigg[\Big(\int_{0}^t ds \sum_{x\in\mathbb{Z}}v(x)\vec{\eta}^L_{sn^2}(x)\big(\vecleft{\eta}^{\ell_{M-1}}_{sn^2}(x)-\vecleft{\eta}_{sn^2}^{\ell_0}(x)\big)\; ds\Big)^2\bigg].
\end{align*}
The last term in the previous expression can be bounded by Proposition \ref{prop:one-block3}. 
By Propositions \ref{doub box} and \ref{prop:one-block3}, assuming \[\tau_x\psi(\eta)=\vecleft\eta^{\ell_k}(x), \qquad \tau_x\widetilde\psi(\eta)=\vec\eta^{\ell_{k+1}}(x),   \] which have both a variance of order $C/\ell_k$, wa can deduce that the first two terms in the expression above are  bounded from above by
\begin{align*}
 C(\rho)t&\bigg(2\sum_{k=0}^{M-1}\Big(\frac{\ell_k}{n}\|v\|^2_{2,n}+\frac{n^\beta}{\alpha n^2}\sum_{x \neq -1} v^2(x)\Big)^{1/2}\bigg)^2\\
 &\leq  C(\rho)t\bigg(\sum_{k=0}^{M-1}\Big(\frac{\ell_k}{n}\|v\|^2_{2,n}\Big)^{1/2}+\Big(\frac{n^\beta}{\alpha n^2}\sum_{x \neq -1} v^2(x)\Big)^{1/2}\bigg)^2\\
& \leq C(\rho)t\frac{2}{n}\bigg(\sum_{k=0}^{M-1}2^{k/2}\ell_0^{1/2}\bigg)^2\|v\|^2_{2,n}+2M^2\frac{ n^\beta}{\alpha n^2}\sum_{x \neq -1} v^2(x)\\
& \leq C(\rho)t\bigg(\frac{L}{n} \|v\|^2_{2,n} +\frac{M^2n^\beta}{\alpha n^2}\sum_{x \neq -1} v^2(x)\bigg).
\end{align*}
Putting together the two previous bounds we obtain the result.
 In the other cases we choose $M$ sufficiently big such that $2^M\ell_0\leq L\leq 2^{M+1}\ell_0$ and a similar computation to the one above proves the claim.
\end{proof}

\subsection{Estimate of \eqref{eq:term5}}
\label{ssec:decomp}

\begin{proposition}\label{prop:decomp}
For any $ L\in\bb {N}$ and  $t>0$:
\begin{multline*}
\mathbb{E}_{\rho}\bigg[\Big(\int_{0}^t  \sum_{x\in\mathbb{Z}}v(x)\Big\{\bar{\eta}_{sn^2}(x)\vec{\eta}_{sn^2}^{L}(x)-(\vec{\eta}_{sn^2}^{L}(x))^2+\frac{1}{2L}
\big(\bar{\eta}_{sn^2}(x)-\bar{\eta}_{sn^2}(x+1)\big)\Big\}\; ds\Big)^2\bigg]\\ \leq
   C(\rho)t\Big(\frac{L}{n}+\frac{n^\beta}{n\alpha}\Big)\|v\|_{2,n}^2.
\end{multline*}
\end{proposition}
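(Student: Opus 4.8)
The plan is to run the same machinery as in the one-block estimate (Proposition~\ref{prop:one-block}), with the local function $\psi$ now replaced by the box average $\vec\eta^L(x)$, whose variance under $\nu_\rho$ equals $\chi(\rho)/L$. First I would put the summand in gradient form. Since $\bar\eta(x)\vec\eta^L(x)-(\vec\eta^L(x))^2=\vec\eta^L(x)\big(\bar\eta(x)-\vec\eta^L(x)\big)$, the telescoping identity
\[
\bar\eta(x)-\vec\eta^L(x)=\frac{1}{L}\sum_{y=x+1}^{x+L}\sum_{z=x}^{y-1}\big(\bar\eta(z)-\bar\eta(z+1)\big)=\sum_{z=x}^{x+L-1}\frac{x+L-z}{L}\big(\bar\eta(z)-\bar\eta(z+1)\big)
\]
shows that $\vec\eta^L(x)\big(\bar\eta(x)-\vec\eta^L(x)\big)$ is a weighted sum, over the bonds $(z,z+1)$ with $x\le z\le x+L-1$, of the gradients $(\bar\eta(z)-\bar\eta(z+1))$ with prefactor $\vec\eta^L(x)$ and coefficients $c_z=(x+L-z)/L\in(0,1]$, with $c_x=1$.

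Then, exactly as in Proposition~\ref{prop:one-block}, I would apply \cite[Lemma 2.4]{KLO} and the variational formula for the $H_{-1}$ norm, reducing the whole estimate to controlling, uniformly in $f\in\mathbf L^2(\nu_\rho)$,
\[
2\int\sum_{x\in\bb Z}v(x)\,\tau_x g(\eta)\,f(\eta)\,\nu_\rho(d\eta)-n^2D_n(f),
\]
where $g$ is the summand appearing in the statement. For an interior bond $z$ with $x+1\le z\le x+L-1$, both $z$ and $z+1$ lie in the box $\{x+1,\dots,x+L\}$, hence $\vec\eta^L(x)$ is invariant under $\eta\mapsto\eta^{z,z+1}$ and the contribution is genuinely antisymmetric; writing the integral as twice its half and performing the exchange (which preserves $\nu_\rho$) produces the factor $(f-f^{z,z+1})$, precisely as in the one-block estimate.

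The delicate point, which I expect to be the main obstacle, is the boundary bond $z=x$ (with $c_x=1$), since $\eta\mapsto\eta^{x,x+1}$ does change the prefactor: $\vec\eta^L(x)(\eta^{x,x+1})=\vec\eta^L(x)(\eta)+\tfrac1L(\bar\eta(x)-\bar\eta(x+1))$. Here I would symmetrize by hand. Writing $V=\vec\eta^L(x)$ and $W=\bar\eta(x)-\bar\eta(x+1)$, the change of variables $\eta\mapsto\eta^{x,x+1}$, together with the invariance of $\nu_\rho$ and of $W^2$, gives
\[
\int VWf\,\nu_\rho(d\eta)=\frac12\int VW\big(f-f^{x,x+1}\big)\,\nu_\rho(d\eta)-\frac{1}{2L}\int W^2 f\,\nu_\rho(d\eta).
\]
This is exactly the reason the quadratic correction $\tfrac{1}{2L}(\bar\eta(x)-\bar\eta(x+1))^2=\tfrac{W^2}{2L}$ was inserted in \eqref{eq:term5}: its contribution $\tfrac{1}{2L}\int W^2 f\,\nu_\rho(d\eta)$ cancels the non-gradient term above, leaving only the antisymmetric part $\tfrac12\int VW(f-f^{x,x+1})\,\nu_\rho(d\eta)$. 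Everything else is parallel to Proposition~\ref{prop:one-block}; this cancellation is the one genuinely new computation.

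Once every bond contributes only through $(f-f^{z,z+1})$, I would split the sum over $x$ (and $z$) according to whether the slow bond $\{-1,0\}$ belongs to $\{(z,z+1):x\le z\le x+L-1\}$, apply Young's inequality with weights of order $A_x=L\,|v(x)|/n^2$ (inserting the factor $\Xi^n_{z,z+1}$ when the slow bond is crossed, as in \eqref{eq:auxest2}), and use the elementary bound $\int(\vec\eta^L(x))^2(\bar\eta(z)-\bar\eta(z+1))^2\,\nu_\rho(d\eta)\le\chi(\rho)/L$ together with $c_z\le1$ for the variance part. The remaining Dirichlet contribution $\tfrac{n^2}{L}\sum_x\sum_z\Xi^n_{z,z+1}I_{z,z+1}(f)$ is absorbed into $n^2D_n(f)$ by Lemma~\ref{lemma:dir1}, since each bond is visited $L$ times. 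Collecting the variance terms is then formally the one-block bound with box size $L$ and $\mathrm{Var}(\psi)$ replaced by $\chi(\rho)/L$, which yields
\[
C(\rho)t\,\frac{\chi(\rho)}{L}\Big(\frac{L^2}{n}\|v\|_{2,n}^2+\frac{L\,n^\beta}{n^2\alpha}\sum_{x\neq-1}v^2(x)\Big)\le C(\rho)t\Big(\frac{L}{n}+\frac{n^\beta}{n\alpha}\Big)\|v\|_{2,n}^2,
\]
which is exactly the announced estimate.
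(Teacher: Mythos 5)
Your proposal is correct and follows essentially the same route as the paper's proof: the same weighted telescoping of $\bar\eta(x)-\vec\eta^L(x)$ into bond gradients, the same symmetrization via $\eta\mapsto\eta^{z,z+1}$ in the variational formula, the same observation that only the bond $z=x$ fails to leave $\vec\eta^L(x)$ invariant and that the resulting non-gradient remainder is exactly cancelled by the $\tfrac{1}{2L}(\bar\eta(x)-\bar\eta(x+1))^2$ correction (which is \eqref{eq:lem6102}/\eqref{eq:secondpart} in the paper's write-up, and which you correctly read as squared, consistently with \eqref{eq:term5}), and the same Young-inequality/Dirichlet-form accounting with $2A_x= Lv(x)/n^2$ and Lemma~\ref{lemma:dir1}. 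No gaps.
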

\begin{proof}
By \cite[Lemma 2.4]{KLO} and following the same arguments as in Proposition \ref{prop:one-block} we have to compute the $H_{-1}$ norm of the function in the statement of the proposition. 
For that purpose notice that
\begin{align}
&2\int \sum_{x\in\mathbb{Z}}v(x)\vec{\eta}^L(x)\Big\{\bar{\eta}(x)-\vec{\eta}^L(x)\Big\}f(\eta)\nu_\rho(d\eta)\label{eq:firstpart}\\
=\; &2\int \sum_{x\in\mathbb{Z}}v(x)\vec{\eta}^L(x)\Big\{\bar{\eta}(x)-\bar{\eta}(x+1)+
\frac{L-1}{L}\big(\bar{\eta}(x+1)-\bar{\eta}(x+2)\big)\notag\\
 &\hspace{3cm} +\cdots+\frac{1}{L}\big(\bar{\eta}(x+L-1)-\bar{\eta}(x+L)\big)\Big\}f(\eta)\nu_\rho(d\eta).\notag
\end{align}
Now we write the previous expression as
\begin{equation*}
\begin{split}
&2\int \sum_{x\in\mathbb{Z}}v(x)\vec{\eta}^L(x)\Big\{\bar{\eta}(x)-\bar{\eta}(x+1)\Big\}f(\eta)\nu_\rho(d\eta)\\
+\;&2\int \sum_{x\in\mathbb{Z}}v(x)\vec{\eta}^L(x)\frac{L-1}{L}\Big\{\bar{\eta}(x+1)-\bar{\eta}(x+2)\Big\}f(\eta)\nu_\rho(d\eta)\\
+\;&\cdots+ 2\int \sum_{x\in\mathbb{Z}}v(x)\vec{\eta}^L(x)\frac{1}{L}\Big\{\bar{\eta}(x+L-1)-\bar{\eta}(x+L)\Big\}f(\eta)\nu_\rho(d\eta).
\end{split}
\end{equation*}
In each one of the terms above, we write it as twice its half, and in one of the integrals we make the change $\eta$ to $\eta^{z,z+1}$ (for some suitable $z$), for which the measure $\nu_\rho$ is invariant. Thus, the last expression equals
\begin{align}
&\int \sum_{x\in\mathbb{Z}}v(x)\vec{\eta}^L(x)\Big\{\bar{\eta}(x)-\bar{\eta}(x+1)\Big\}\Big(f(\eta)-f(\eta^{x,x+1})\Big)\nu_\rho(d\eta) \label{eq:lem6101}\\
+&\int \sum_{x\in\mathbb{Z}}v(x)\vec{\eta}^L(x)\frac{L-1}{L}\Big\{\bar{\eta}(x+1)-\bar{\eta}(x+2)\Big\}\Big(f(\eta)-f(\eta^{x+1,x+2})\Big)\nu_\rho(d\eta) \notag\\
+&\cdots \\&+\int \sum_{x\in\mathbb{Z}}v(x)\vec{\eta}^L(x)\frac{1}{L}\Big\{\bar{\eta}(x+L-1)-\bar{\eta}(x+L)\Big\}\Big(f(\eta)-f(\eta^{x+L,x+L+1})\Big)\nu_\rho(d\eta)\notag\\
+&\int\sum_{x\in\mathbb{Z}}v(x)\frac{\bar{\eta}(x+1)-\bar{\eta}(x)}{L}\Big\{\bar{\eta}(x)-\bar{\eta}(x+1)\Big\}f(\eta)\nu_\rho(d\eta). \label{eq:lem6102}
\end{align}
Notice that the last term  \eqref{eq:lem6102} comes from the change of variables $\eta$ to $\eta^{x,x+1}$ in the first term \eqref{eq:lem6101} above.
The whole sum can be rewritten as
\begin{align}
&\int \sum_{x\in\mathbb{Z}}v(x)\vec{\eta}^L(x)\frac{1}{L}\sum_{y=x+1}^{x+L}\sum_{z=x}^{y-1}\Big\{\bar{\eta}(z)-\bar{\eta}(z+1)\Big\}\Big(f(\eta)-f(\eta^{z,z+1})\Big)\nu_\rho(d\eta)\label{eq:boundlem}\\
-&\int\sum_{x\in\mathbb{Z}}v(x)\frac{1}{L}\big(\bar{\eta}(x)-\bar{\eta}(x+1)\big)^2f(\eta)\nu_\rho(d\eta).\label{eq:secondpart}
\end{align}
The integral in the statement of the proposition  is exactly equal to the sum of \eqref{eq:firstpart} and \eqref{eq:secondpart}, therefore it is bounded by the first term in the previous expression, namely \eqref{eq:boundlem}.
As before, at this point we have to be careful and split the sum in $x$ according to whether the slow bond intersects the set of bonds
\[\widehat{\mathcal{E}}^L_x=\{(z,z+1): x\leq z\leq y-1\,\text{ and } \,x+1\leq y\leq x+L\}.\]
A simple computation shows that for $x\in\Lambda_{0}^L=\{-L,\ldots,-1\}$ the slow bond belongs to the set $\widehat{\mathcal{E}}_{x}^L$, otherwise, it does not. Then, the first term in last expression is equal to
\begin{equation*}
\begin{split}
&\int \sum_{x\notin\Lambda_{0}^L}v(x)\vec{\eta}^L(x)\frac{1}{L}\sum_{y=x+1}^{x+L}\sum_{z=x}^{y-1}\Big\{\bar{\eta}(z)-\bar{\eta}(z+1)\Big\}\Big(f(\eta)-f(\eta^{z,z+1})\Big)\nu_\rho(d\eta)\\
+&\int \sum_{x\in\Lambda_{0}^L}v(x)\vec{\eta}^L(x)\frac{1}{L}\sum_{y=x+1}^{x+L}\sum_{z=x}^{y-1}\Big\{\bar{\eta}(z)-\bar{\eta}(z+1)\Big\}\Big(f(\eta)-f(\eta^{z,z+1})\Big)\nu_\rho(d\eta).
\end{split}
\end{equation*}
Now, we use the same arguments as above. {In each of the terms above we use Young's inequality with $2A_x=L v(x)/n^2 $} and we bound the first term by
\begin{equation*}
C(\rho)\frac{L}{ n^2} \sum_{x\notin\Lambda_{0}^L}v^2(x)+\frac{n^2}{{L^2}} \sum_{x\notin\Lambda_{0}^L}\sum_{y=x+1}^{x+L}\sum_{z=x}^{y-1}I_{z,z+1}(f).
\end{equation*}
The second term is bounded by
\begin{equation*}
C(\rho)\bigg(\frac{n^\beta}{n^2\alpha} \sum_{x\in\Lambda_{0}^L}v^2(x)+\frac{L}{n^2}\sum_{x\in\Lambda_{0}^L}v^2(x)\bigg)+\frac{n^2}{{L^2}} \sum_{x\in\Lambda_{0}^L}\sum_{y=x+1}^{x+L}\sum_{z=x}^{y-1}\Xi^n_{z,z+1}I_{z,z+1}(f).
\end{equation*}
Putting together the two previous estimates plus Lemma \ref{lemma:dir1}, precisely \eqref{eq:dirich1} with $\ell=L$, the proof ends.
\end{proof}

\subsection{Estimate of \eqref{eq:term6}: Cauchy-Schwarz inequality}
\label{ssec:cs}
%\begin{proposition}\label{prop:cs1}
%
%\begin{equation*}
%\mathbb{E}^n_{\rho}\Big[\Big(\int_{0}^t ds \sum_{x\in\mathbb{Z}}v(x)\bar{\eta}_{sn^a}(x)\big(\bar{\eta}_{sn^a}^{\ell}(x+1)-\bar{\eta}_{sn^a}^{\ell}(x)\big)\Big)^2\Big]\leq
% C(\rho)\frac{ t^2n}{\ell^2}\|v\|_{2,n}^2.
%\end{equation*}
%\end{proposition}
%\begin{proof}
%To prove that result it is enough to notice that
%\begin{equation*}
%\bar{\eta}^{\ell}(x+1)-\bar{\eta}^{\ell}(x)=\big(\bar{\eta}(x+1+\ell)-\bar{\eta}(x+1)\big)/\ell,
%\end{equation*}
%which together with the Cauchy-Schwarz inequality bounds the expectation in the statement of the proposition by
%\begin{equation*}
%C(\rho)\frac{t^2}{\ell^2} \sum_{x\in\mathbb{Z}}v^2(x).
%\end{equation*}
%\end{proof}

\begin{proposition}\label{prop:cs2}
For any $L\in\bb {N}$ and  $t>0$:
\begin{equation*}
\mathbb{E}^n_{\rho}\bigg[\Big(\int_{0}^t  \sum_{x\in\mathbb{Z}}v(x)
\Big\{\frac{(\bar{\eta}_{sn^2}(x)-\bar{\eta}_{sn^2}(x+1))^2}{2L}-\frac{\chi(\rho)}{L}\Big\}\;ds\Big)^2\bigg]\leq
 C(\rho)\frac{ t^2n}{L^2}\|v\|_{2,n}^2.
\end{equation*}
\end{proposition}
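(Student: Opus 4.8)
The plan is to exploit directly the stationarity of the process together with the product structure of $\nu_\rho$, which is exactly what makes this term the simplest of the six: unlike the other estimates, it needs neither the $H_{-1}$ variational formula nor any control of the slow bond. First I would apply the Cauchy-Schwarz inequality in the time variable and then use stationarity. Writing
\[
W_x(\eta):=\frac{(\bar{\eta}(x)-\bar{\eta}(x+1))^2}{2L}-\frac{\chi(\rho)}{L},
\]
the Cauchy-Schwarz inequality gives $\big(\int_0^t F(\eta_{sn^2})\,ds\big)^2\le t\int_0^t F(\eta_{sn^2})^2\,ds$ with $F(\eta)=\sum_x v(x)W_x(\eta)$. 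Since the process starts from the invariant measure $\nu_\rho$ it is stationary, so $\bb E_\rho[F(\eta_{sn^2})^2]=\bb E_\rho[F^2]$ does not depend on $s$, and the left-hand side of the proposition is bounded by $t^2\,\bb E_\rho[F^2]$. It then remains to estimate this static expectation.

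Next I would observe that each $W_x$ is centered under $\nu_\rho$, which is precisely the role played by the subtracted constant $\chi(\rho)/L$: since $\bar{\eta}(x)$ and $\bar{\eta}(x+1)$ are independent, mean-zero, with variance $\chi(\rho)$, one has $\bb E_\rho[(\bar{\eta}(x)-\bar{\eta}(x+1))^2]=2\chi(\rho)$, so $\bb E_\rho[W_x]=0$. Expanding the square yields
\[
\bb E_\rho[F^2]=\sum_{x,x'\in\bb Z} v(x)v(x')\,\bb E_\rho[W_x W_{x'}].
\]
Because $W_x$ depends only on the occupation variables at the sites $x$ and $x+1$, the supports of $W_x$ and $W_{x'}$ are disjoint whenever $|x-x'|\ge 2$; by independence and centering the covariance $\bb E_\rho[W_x W_{x'}]$ then vanishes. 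Hence only the diagonal band $|x-x'|\le 1$, i.e.\ three values of $x'$ per site $x$, contributes.

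Finally, for each such pair the covariance is bounded by $C(\rho)/L^2$, since $\bar{\eta}$ is bounded and $W_x$ carries the prefactor $1/(2L)$. Using Young's inequality $|v(x)v(x')|\le \tfrac12(v^2(x)+v^2(x'))$ to symmetrize the remaining sum, I obtain
\[
\bb E_\rho[F^2]\le \frac{C(\rho)}{L^2}\sum_{x\in\bb Z} v^2(x)=\frac{C(\rho)\,n}{L^2}\,\|v\|_{2,n}^2,
\]
where the last identity is simply the definition \eqref{vsummable} of $\|v\|_{2,n}^2$. Combining this with the factor $t^2$ from the first step gives exactly the claimed bound $C(\rho)\,t^2 n L^{-2}\|v\|_{2,n}^2$. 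There is no genuine obstacle here: the Cauchy-Schwarz bound in time is crude enough that the Dirichlet form never enters, and the only point requiring a little care is the bookkeeping of the finite-range covariance support.
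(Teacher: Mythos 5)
Your proof is correct and is exactly the argument the paper has in mind: its proof of Proposition \ref{prop:cs2} consists of the single sentence ``The proof is straightforward using the Cauchy-Schwarz inequality,'' and your Cauchy-Schwarz in time, stationarity, centering of $W_x$, and finite-range covariance computation is the natural (and correct) way to fill in that one line. Nothing to change.
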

\begin{proof}
The proof is straightforward using the Cauchy-Schwarz inequality.
\end{proof}

\subsection{Technical lemma: estimates in the Dirichlet form}
\label{ssec:dirichlet}
\begin{lemma}\label{lemma:dir1}
Recall the definition \eqref{eq:def_i}. For any $\ell\in\mathbb{N}$ it holds that
\begin{align}
\frac{n^2}{\ell^2}\sum_{x\in\mathbb{Z}}\sum_{y=x+1}^{x+\ell}\sum_{z=x}^{y-1}\Xi^n_{z,z+1}I_{z,z+1}(f)& \leq n^2D_n(f), \label{eq:dirich1}\\
\frac{n^2}{\ell^2}\sum_{x\in\mathbb{Z}}\sum_{y=x-\ell}^{x-1}\sum_{z=y}^{x-1}\Xi^n_{z,z+1}I_{z,z+1}(f)& \leq n^2D_n(f),\label{eq:dirich2}\\
\frac{n^2}{\ell^2}\sum_{x\in\mathbb{Z}}\sum_{y=1}^\ell\sum_{z=y+x}^{y+x+\ell-1}\Xi^n_{z,z+1}I_{z,z+1}(f) &\leq n^2D_n(f).\label{eq:dirich3}
\end{align}
\end{lemma}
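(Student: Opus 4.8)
The plan is to reduce all three estimates to one elementary combinatorial observation. Recall from \eqref{eq:dirichlet} together with \eqref{eq:zeta} that the Dirichlet form decomposes over bonds as
\[
D_n(f)=\sum_{z\in\bb Z}\Xi^n_{z,z+1}I_{z,z+1}(f),
\]
where the coefficients $\Xi^n_{z,z+1}$ already incorporate the slow bond $\{-1,0\}$ through its own weight $\Xi^n_{-1,0}$. Since the left-hand sides of \eqref{eq:dirich1}, \eqref{eq:dirich2} and \eqref{eq:dirich3} are built from exactly the same nonnegative quantities $\Xi^n_{z,z+1}I_{z,z+1}(f)$, it suffices, after cancelling the common prefactor $n^2$, to bound the \emph{multiplicity} with which each fixed bond $(z,z+1)$ is counted in the relevant triple sum by $\ell^2$, uniformly in $z$. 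Once this is done, each triple sum is at most $\ell^2\sum_z\Xi^n_{z,z+1}I_{z,z+1}(f)=\ell^2 D_n(f)$, and dividing by $\ell^2$ yields the claim. Crucially, because the bound on the multiplicity will be uniform in $z$, the slow bond requires no separate treatment: this is precisely why it is worth carrying $\Xi^n_{z,z+1}$ through rather than the bare rate.

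First I would treat \eqref{eq:dirich1}. Fixing a bond $(z,z+1)$, I count the pairs $(x,y)$ subject to $x\le z\le y-1$ and $x+1\le y\le x+\ell$. The constraints force $z-\ell+1\le x\le z$, and for each such $x$ the summation index $y$ ranges over $\{z+1,\dots,x+\ell\}$, which has $x+\ell-z$ elements; summing over $x$ gives a multiplicity $\sum_{m=1}^{\ell}m=\ell(\ell+1)/2\le \ell^2$, independently of $z$. The estimate \eqref{eq:dirich2} is the mirror image: fixing $(z,z+1)$ and counting pairs $(x,y)$ with $y\le z\le x-1$ and $x-\ell\le y\le x-1$ forces $z+1\le x\le z+\ell$, and for each such $x$ there are $z-x+\ell+1$ admissible values of $y$, so the multiplicity is again $\ell(\ell+1)/2\le \ell^2$.

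For \eqref{eq:dirich3} the counting is cleanest. Fixing a bond $(z,z+1)$, the constraint $y+x\le z\le y+x+\ell-1$ reads $z-\ell+1\le x+y\le z$; thus for each of the $\ell$ values $y\in\{1,\dots,\ell\}$ there are exactly $\ell$ admissible values of $x$, giving multiplicity exactly $\ell^2$, and here the resulting inequality is saturated. In all three cases the multiplicity is uniform in the bond, so the comparison with $D_n(f)$ is immediate. The only real work is bookkeeping the index ranges correctly, and this is the sole (minor) obstacle; the content of the lemma is entirely this multiplicity count, with no probabilistic input beyond the bond decomposition of $D_n(f)$.
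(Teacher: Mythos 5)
Your proof is correct: all three multiplicity counts check out ($\ell(\ell+1)/2\le\ell^2$ for \eqref{eq:dirich1} and \eqref{eq:dirich2}, exactly $\ell^2$ for \eqref{eq:dirich3}), the terms $\Xi^n_{z,z+1}I_{z,z+1}(f)$ are nonnegative, and the bond decomposition $D_n(f)=\sum_z\Xi^n_{z,z+1}I_{z,z+1}(f)$ is exactly \eqref{eq:dirichlet}, so the termwise multiplicity bound immediately gives the three inequalities. The underlying idea is the same as the paper's --- counting how often each bond occurs in the triple sum --- but your execution is genuinely cleaner. The paper splits the outer sum according to whether $x$ lies in $\Lambda_0^\ell$, isolates the slow bond $\{-1,0\}$ separately, and for $x\notin\Lambda_0^\ell$ invokes a ``translation invariance'' identity $\Xi^n_{z,z+1}I_{z,z+1}(f)=\Xi^n_{x+1,x+2}I_{x+1,x+2}(f)$ which, as stated, is not literally correct (for a fixed $f$ the quantities $I_{z,z+1}(f)$ differ across $z$); what actually makes the paper's bound work is the same multiplicity count you perform. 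By swapping the order of summation first and observing that the multiplicity bound is uniform in the bond --- including the slow bond, whose weight $\Xi^n_{-1,0}$ is already built into $D_n(f)$ --- you dispense with both the case analysis and the dubious identity. This buys a shorter, more robust argument; the paper's version has the minor virtue of making explicit where the slow bond sits in the count, which foreshadows the role $\Lambda_0^\ell$ plays elsewhere in Section \ref{sec:BG}, but for the lemma itself your route is preferable.
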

\begin{proof}
We present the proof for the first estimate \eqref{eq:dirich1}, but  one can do exactly the same argument for the other two ones.
By definition, we have
\begin{equation} D_n(f)=\sum_{x \in \bb Z} \Xi_{x,x+1}^n I_{x,x+1}(f),\label{eq:dirichlet}\end{equation}
where
\begin{equation}\label{eq:zeta}\Xi_{x,x+1}^n(\eta)=\begin{cases}  \frac{1}{2}+\frac{a}{2n^\gamma}; &  x \neq -1\\
\frac{\alpha}{2n^\beta}+\frac{a}{2n^\gamma};&  x =-1. \end{cases}
\end{equation}
Moreover, if $x \notin \Lambda_0^\ell$, then, by translation invariance of the measure $\nu_\rho$, for all $z$ in $\mathcal{E}_x^\ell$ (the set of bonds defined in \eqref{eq:set}) we have \[  \Xi_{z,z+1}^n I_{z,z+1}(f)= \Xi_{x+1,x+2}^n I_{x+1,x+2}(f). \]
From that observation, the quantity
\[\frac{n^2}{\ell^2}\sum_{x\notin\Lambda_0^\ell}\sum_{y=x+1}^{x+\ell}\sum_{z=x}^{y-1}\Xi^n_{z,z+1}I_{z,z+1}(f)\]
is bounded from above by
\begin{equation}\label{eq:dir1}
n^2\sum_{x\notin\Lambda_0^\ell}\Xi^n_{x+1,x+2}I_{x+1,x+2}(f).
\end{equation}
Now, if $x \in \Lambda_0^\ell$, we have to isolate the bond $\{-1,0\}$, that appears exactly $\ell^2$ times. For the other bonds, the same translation invariance argument holds. More precisely,
\begin{equation*}
\sum_{x\in\Lambda_0^\ell}\sum_{y=x+1}^{x+\ell}\sum_{z=x}^{y-1}\Xi^n_{z,z+1}I_{z,z+1}(f)= \ell^2 \Xi_{-1,0}^n I_{-1,0}(f) + \ell^2 \sum_{x \in \Lambda_0^\ell} \Xi^n_{x+1,x+2}I_{x+1,x+2}(f).\end{equation*}
Multiplying by $n^2/\ell^2$, and putting the previous expression together with \eqref{eq:dir1}, the proof ends.
\end{proof}

\section{Auxiliary results}
\label{sec:auxi}

We give here two auxiliary results which can be obtained very similarly, following the proof of Proposition \ref{prop:one-block}. We need the Corollary \ref{cor:estimate} (stated below) for the case $\beta<1$ in Subsection \ref{ssec:limitpoints} when we prove the energy estimate, namely \eqref{eq:energyestimate}. We need Proposition  \ref{prop:estim-1} below in Subsection \ref{vanishB}.

\begin{corollary}\label{cor:estimate}
Let  $v:\mathbb{Z}\to{\mathbb{R}}$ be a measurable function that satisfies \eqref{vsummable}. Then, there exists $C(\rho)>0$ such that for any $t>0$ and any $\ell,n \in \bb N$:
\begin{multline*}
\mathbb{E}_{\rho}\bigg[\Big(\sqrt n\int_{0}^t \sum_{x\in\ell \mathbb{Z}}v(x)\Big\{\vec{\eta}_{sn^2}^{\ell}(x)-\vec{\eta}_{sn^2}^{\ell}(x+\ell)\Big\}\; ds\Big)^2\bigg]\\
\leq
 C(\rho)t\;\bigg\{\frac{\ell}{n} \sum_{x \in \ell \bb Z} v^2(x)+\frac{ n^\beta}{\alpha n}\Big( v^2(-\ell) + v^2(-2\ell)\Big)\bigg\}.
\end{multline*}
%where
%\[
%J_\ell:= \bigcup_{y=1}^\ell \Big\{ \Lambda_{y+1}^\ell \cap \ell \bb Z \Big\}.
%\]
%Notice that, for any $y \in \bb Z$, the set $\Lambda_y^\ell \cap \ell \bb Z$ only contains one element, so that $J_\ell$ is a subset of $\bb Z$ containing exactly $\ell$ elements.
In particular, assume $\beta < 1$ and let $\vphi\in\mc S(\bb R)$. If $\ell=\varepsilon n$ ($\varepsilon$ fixed), and $v(x)=\nabla \vphi(x/n)$, then
\begin{multline}
\limsup_{n\to\infty}\mathbb{E}_{\rho}\bigg[\Big(\sqrt n\int_{0}^t  \sum_{x\in\varepsilon n \mathbb{Z}}\nabla\vphi\Big(\frac{x}{n}\Big)\Big\{\vec{\eta}_{sn^2}^{\varepsilon n}(x)-\vec{\eta}_{sn^2}^{\varepsilon n}(x+\varepsilon n)\Big\}\; ds\Big)^2\bigg] \\ \leq
 C(\rho)t\varepsilon  \sum_{x \in \bb Z} \big(\nabla\vphi(\varepsilon x)\big)^2. \label{eq:bound_charact}
\end{multline}
\end{corollary}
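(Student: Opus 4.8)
The plan is to mirror the proof of Proposition \ref{prop:one-block}, taking the local function $\tau_x\psi$ to be the constant $1$ and replacing the single-block replacement by the comparison of two \emph{adjacent} boxes. First I would square and invoke \cite[Lemma 2.4]{KLO}; since the factor $\sqrt n$ sits outside the time integral, the left-hand side is bounded by $Ctn$ times the $H_{-1}$-norm of $G(\eta):=\sum_{x\in\ell\bb Z}v(x)\big(\vec\eta^{\ell}(x)-\vec\eta^{\ell}(x+\ell)\big)$, and the latter is given by the variational formula
\[
\|G\|_{-1}^2=\sup_{f\in{\bf L}^2(\nu_\rho)}\Big\{2\int G(\eta)f(\eta)\,\nu_\rho(d\eta)-n^2D_n(f)\Big\}.
\]
Then I would telescope, using
\[
\vec\eta^{\ell}(x)-\vec\eta^{\ell}(x+\ell)=\frac{1}{\ell}\sum_{y=x+1}^{x+\ell}\sum_{z=y}^{y+\ell-1}\big(\bar\eta(z)-\bar\eta(z+1)\big),
\]
symmetrize each summand through the exchange $\eta\mapsto\eta^{z,z+1}$ (which preserves $\nu_\rho$) to produce the factor $f(\eta)-f(\eta^{z,z+1})$, and apply Young's inequality with weights $A_x$. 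The key combinatorial observation is that, for $x\in\ell\bb Z$, the nearest-neighbor bonds $(z,z+1)$ that appear run over $z\in\{x+1,\dots,x+2\ell-1\}$, so the slow bond $(-1,0)$ is crossed exactly for the two indices $x=-\ell$ and $x=-2\ell$; this is precisely what generates the terms $v^2(-\ell)+v^2(-2\ell)$ in the statement.

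For those two indices I would run Young's inequality with the weights $\Xi^n_{z,z+1}$, exactly as in \eqref{eq:auxest2}--\eqref{eq:auxest22}, so that the slow bond contributes $1/\Xi^n_{-1,0}\sim 2n^\beta/\alpha$ on the variance side and $\Xi^n_{-1,0}$ on the Dirichlet side; for all other $x$ the plain Young inequality is enough. Choosing $A_x\sim v(x)/n^2$ (with no factor of $\ell$, as in Proposition \ref{doub box}), the variance part coming from the non-slow indices is of order $\frac{\ell}{n^2}\sum_{x\in\ell\bb Z}v^2(x)$, while the two slow indices add a contribution of order $\frac{1}{n^2}\big(\ell+\tfrac{n^\beta}{\alpha}\big)\big(v^2(-\ell)+v^2(-2\ell)\big)$; multiplying by $Ctn$ reproduces the two terms of the claimed bound, the leftover piece $\frac{\ell}{n}(v^2(-\ell)+v^2(-2\ell))$ being absorbed into the main sum over $\ell\bb Z$. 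The Dirichlet part is disposed of by a variant of Lemma \ref{lemma:dir1}: since each bond $(z,z+1)$ is counted $O(\ell)$ times in the triple sum (for each admissible $y$ there is a unique $x\in\ell\bb Z$ in the relevant window), one has
\[
\frac{n^2}{\ell}\sum_{x\in\ell\bb Z}\sum_{y=x+1}^{x+\ell}\sum_{z=y}^{y+\ell-1}\Xi^n_{z,z+1}I_{z,z+1}(f)\le Cn^2D_n(f),
\]
which absorbs the supremum and yields the first display of the corollary.

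For the second display I would specialize $\ell=\varepsilon n$ and $v(x)=\nabla\vphi(x/n)$. The leading term becomes $\frac{\varepsilon n}{n}\sum_{x\in\varepsilon n\bb Z}\big(\nabla\vphi(x/n)\big)^2=\varepsilon\sum_{x\in\bb Z}\big(\nabla\vphi(\varepsilon x)\big)^2$, which is independent of $n$ and is exactly the right-hand side of \eqref{eq:bound_charact}. The boundary term equals $\frac{n^\beta}{\alpha n}\big((\nabla\vphi(-\varepsilon))^2+(\nabla\vphi(-2\varepsilon))^2\big)$, and since $\beta<1$ forces $n^{\beta-1}\to 0$ while $\nabla\vphi$ is bounded, it vanishes as $n\to\infty$. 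Taking $\limsup_n$ then gives \eqref{eq:bound_charact} (note that $v(x)=\nabla\vphi(x/n)$ indeed satisfies \eqref{vsummable} uniformly in $n$, being a Riemann sum of $\int(\nabla\vphi)^2$).

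There is no genuine obstacle here beyond careful bookkeeping, which is why the corollary is stated as a consequence of the machinery already developed. The one delicate point is pinning down the geometry of the slow bond for the sparse index set $\ell\bb Z$ — identifying that only $x=-\ell$ and $x=-2\ell$ cross it, with multiplicity $O(\ell)$ — together with the matching Dirichlet-form estimate with prefactor $n^2/\ell$ (rather than the $n^2/\ell^2$ of Lemma \ref{lemma:dir1}), the extra factor of $\ell$ being exactly compensated by the restriction of the outer sum to $\ell\bb Z$.
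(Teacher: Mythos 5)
Your argument is correct and is essentially the paper's own proof: the paper likewise reduces to an $H_{-1}$ bound via \cite[Lemma 2.4]{KLO}, reuses the telescoping/weighted-Young machinery of Propositions \ref{prop:one-block} and \ref{doub box} with $\tau_x\psi\equiv 1$, observes that restricting the outer sum to $\ell\bb Z$ leaves only the indices $-\ell$ and $-2\ell$ crossing the slow bond (in the paper's phrasing, $\Lambda_y^{\ell}\cap\ell\bb Z$ is a singleton for each $y$), and concludes the second display by letting $n^{\beta-1}\to 0$. The only cosmetic difference is that the paper first splits over $y\in\{1,\dots,\ell\}$ by convexity before invoking the variational formula, whereas you keep the double sum intact and compensate in the Dirichlet-form count with the prefactor $n^2/\ell$; both yield the stated bound.
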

\begin{proof}
As in the one-block estimate, namely, Proposition \ref{prop:one-block}, we need to bound the following norm
\begin{equation*}
n\sum_{y=1}^{\ell}\bigg\|\sum_{x\in\ell\mathbb{Z}}v(x)\frac{1}{\ell}(\bar{\eta}(y+x)-\bar{\eta}(y+x+\ell))\bigg\|_{-1}^2.\end{equation*}
Repeating the proof of Proposition \ref{doub box} (more precisely, one needs to bound almost the same quantity as  \eqref{eq:init}, except that the sum runs out of $\ell \bb Z$ and we cancel out the term $\tau_x\psi$), one can easily show that this quantity is bounded by
\[
C(\rho)n\sum_{y=1}^{\ell}\bigg(\frac{1}{n^2}  \sum_{x \in \ell \bb Z} v^2(x) +\frac{n^\beta}{\alpha n^2\ell}\sum_{x \in \Lambda_{y}^\ell}v^2(x)\bigg).\]
Notice that, for any $y \in \bb Z$, the set $\Lambda_{y}^\ell \cap \ell \bb Z$ only contains one element, which can be either $-\ell$ or $-2\ell$. Since $y$ runs over $\ell$ elements, the last quantity is bounded by
\[C(\rho)\bigg(\frac{\ell}{n} \sum_{x \in \ell \bb Z} v^2(x)+\frac{ n^\beta}{\alpha n}\Big\{ v^2(-\ell) + v^2(-2\ell)\Big\}\bigg).
\]
This ends the first part of the proof. Now, take $\ell=\varepsilon n$ ($\varepsilon$ fixed), and $v(x)=\nabla \vphi(x/n)$ with $\vphi \in \mc S(\bb R)$. The bound becomes
\[C(\rho)\bigg(\varepsilon \sum_{x \in \varepsilon n \bb Z} \Big(\nabla\vphi\Big(\frac{x}{n}\Big)\Big)^2+\frac{ n^\beta}{\alpha n}\Big\{ \big(\nabla\vphi(-\varepsilon)\big)^2 + \big(\nabla\vphi(-2\varepsilon)\big)^2\Big\}\bigg).
\]
Since, in the case $\beta <1$,
\[\frac{ n^\beta}{\alpha n}\big(\nabla\vphi(-\varepsilon)\big)^2 \xrightarrow[n \to \infty]{} 0,\]
 we have proved \eqref{eq:bound_charact}.
\end{proof}

\begin{proposition}\label{prop:estim-1}
There exists a constant $C>0$, such that for any $\epsilon\in[0,\frac{1}{4}]$, any $t>0$ and any $n\in\bb N$:
\begin{equation}\label{eq:-10}
\mathbb{E}^n_{{\rho}}\bigg[\Big(\int_{0}^t \bar\eta_{sn^2}(0)\bar\eta_{sn^2}(-1) \; ds\Big)^2\bigg] \le \frac{Ct}{n^{1+\epsilon}}.
\end{equation}
\end{proposition}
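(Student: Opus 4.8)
The plan is to mirror the proof of Proposition \ref{prop:one-block}: first reduce the time-integral estimate to a static bound for an $H_{-1}$ norm, and then control that norm through the variational formula, Young's inequality and the Dirichlet form bounds of Lemma \ref{lemma:dir1}. Since $\bar\eta(-1)\bar\eta(0)$ has mean zero with respect to $\nu_\rho$, \cite[Lemma 2.4]{KLO} gives
\[
\mathbb{E}_\rho\bigg[\Big(\int_0^t \bar\eta_{sn^2}(0)\bar\eta_{sn^2}(-1)\,ds\Big)^2\bigg]\le Ct\,\big\|\bar\eta(-1)\bar\eta(0)\big\|_{-1}^2,
\]
so it suffices to prove $\|\bar\eta(-1)\bar\eta(0)\|_{-1}^2\le C\,n^{-(1+\epsilon)}$.

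The essential difficulty, absent in Proposition \ref{prop:one-block}, is that $\bar\eta(-1)\bar\eta(0)$ sits exactly across the slow bond and is invariant under the exchange $\eta\mapsto\eta^{-1,0}$; hence it cannot be written as a single gradient at that bond, and a naive slow-bond symmetrization is useless. To circumvent this, I would push the two factors away from the defect into the fast region, by introducing a box of free size $\ell$ on each side and writing
\[
\bar\eta(-1)\bar\eta(0)=\bar\eta(-1)\big(\bar\eta(0)-\vec{\eta}^{\ell}(0)\big)+\big(\bar\eta(-1)-\vecleft{\eta}^{\ell}(-1)\big)\vec{\eta}^{\ell}(0)+\vecleft{\eta}^{\ell}(-1)\,\vec{\eta}^{\ell}(0),
\]
where $\vec{\eta}^{\ell}(0)$ and $\vecleft{\eta}^{\ell}(-1)$ are the averages over $\{1,\dots,\ell\}$ and $\{-1-\ell,\dots,-2\}$, respectively. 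Each of the first two terms involves only gradients at fast bonds, so they are handled by the arguments of Propositions \ref{prop:one-block} and \ref{prop:one-block-left} with the weight $v=\mathbf{1}_{\{-1\}}$. Crucially, for this weight the index $-1$ lies outside the relevant sets $\Lambda_1^{\ell-1}$ and $\Lambda_{-\ell}^{\ell}$, so the slow-bond contribution $\tfrac{\ell n^\beta}{\alpha n^2}\sum_\Lambda v^2$ vanishes and these two terms cost only $Ct\,\ell^2/n^2$ and $Ct\,\ell/n^2$.

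The remaining obstacle is the doubly-averaged term $\vecleft{\eta}^{\ell}(-1)\,\vec{\eta}^{\ell}(0)$, which still straddles the slow bond but now has variance only of order $\ell^{-2}$. Here I see two possibilities. The crude one is to bound its time integral directly by the Cauchy--Schwarz inequality and stationarity, exactly as in Proposition \ref{prop:cs2}, which costs $Ct^2/\ell^2$. The sharper one is to connect the two box-averages by a telescoping sum of gradients: all of them are fast except the single slow-bond gradient $\bar\eta(0)-\bar\eta(-1)$, which appears with an $O(1)$ coefficient, and a Young's inequality tuned to the slow-bond Dirichlet weight $\Xi^n_{-1,0}\simeq \alpha/(2n^\beta)$ then produces a contribution of order $\tfrac{n^\beta}{\alpha n^2\ell}$, linear in $t$. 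This is the one and only place where the defect enters the estimate, and controlling it is the heart of the argument.

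Combining the three contributions yields a bound of the form $Ct\big(\ell^2/n^2+n^\beta/(\alpha n^2\ell)\big)$ (or $Ct\,\ell^2/n^2+Ct^2/\ell^2$ if one prefers the Cauchy--Schwarz route). Optimizing over the free parameter $\ell$, chosen as a suitable power of $n$, balances the smearing error $\ell^2/n^2$ against the slow-bond error and gives $Ct/n^{1+\epsilon}$ for every $\epsilon\le \tfrac14$, which is \eqref{eq:-10}. I expect the delicate step to be precisely the quantitative control of the doubly-averaged term across the slow bond, since it is responsible both for the appearance of the factor $n^\beta/\alpha$ and for the restriction on the admissible exponent $\epsilon$.
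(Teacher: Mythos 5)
Your decomposition is essentially the one the paper uses (push $\bar\eta(-1)$ and $\bar\eta(0)$ away from the slow bond into box averages on either side, and control the leftover product of two disjoint averages), but the quantitative bookkeeping does not close. With a single box size $\ell$ on each side, the bounds you state, namely $Ct\,\ell^2/n^2$ for the one-block replacement (which is indeed what Proposition \ref{prop:one-block} yields for $v=\mathbf{1}_{\{-1\}}$, since $\|v\|_{2,n}^2=1/n$ and $\sum_{x\in\Lambda}v^2(x)=0$) and $Ct^2/\ell^2$ for the cross term, balance at $\ell\sim\sqrt{n}$ and give only $Ct(1+t)/n$, i.e.\ the exponent $\epsilon=0$; no choice of $\ell$ reaches $n^{-(1+\epsilon)}$ with $\epsilon>0$ because the two constraints $\ell^2/n^2\le n^{-(1+\epsilon)}$ and $\ell^{-2}\le n^{-(1+\epsilon)}$ are incompatible. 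And $\epsilon>0$ is genuinely needed downstream: in the case $\gamma=\beta=1$ of Proposition \ref{vanishingofB} the slow-bond contribution is of order $n^{2-2\gamma-\epsilon}=n^{-\epsilon}$ and would not vanish with $\epsilon=0$. Your proposed ``sharper route'' for $\vecleft{\eta}^{\ell}(-1)\,\vec{\eta}^{\ell}(0)$ does not repair this: any telescoping that turns $\vec{\eta}^{\ell}(0)$ into $\vecleft{\eta}^{\ell}(-1)$ plus gradients leaves behind $\big(\vecleft{\eta}^{\ell}(-1)\big)^2$, which has mean $\chi(\rho)/\ell\neq 0$ and hence infinite $H_{-1}$ norm; the deterministic part alone already contributes $\big(t\chi(\rho)/\ell\big)^2$ to the second moment, so a term of order $t^2/\ell^2$ is unavoidable on that route (and the bound $t\,n^{\beta}/(\alpha n^2\ell)$ would in any case fail to beat $n^{-(1+\epsilon)}$ once $\beta$ is large).

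The paper closes the gap with two modifications. It uses \emph{two different} box sizes, $\ell$ to the left of $-1$ and $L$ to the right of $0$, writing $\bar\eta(-1)\bar\eta(0)=\bar\eta(0)\big(\bar\eta(-1)-\vecleft{\eta}^{\ell}(-1)\big)+\vecleft{\eta}^{\ell}(-1)\big(\eta(0)-\vec{\eta}^{L}(0)\big)+\vecleft{\eta}^{\ell}(-1)\vec{\eta}^{L}(0)$, and it invokes the sharper one-block bounds of \cite[Lemma 7.1]{fgn3}, namely $Ct\,\ell/n^{2}$ and $Ct\,L/(n\ell)$, which are linear rather than quadratic in the box size because the telescoping is organized so that each bond carries a weight at most one. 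The cross term is handled exactly as in your ``crude'' route, by Cauchy--Schwarz and independence of the two disjoint averages, giving $Ct^{2}/(\ell L)$. The asymmetric choice $\ell=n^{1-\epsilon}$, $L=n^{1-2\epsilon}$ makes the first two terms equal to $Ct\,n^{-(1+\epsilon)}$ while the third is $Ct^{2}n^{3\epsilon-2}$, which is $O(t\,n^{-(1+\epsilon)})$ precisely when $\epsilon\le\frac14$; this is where the threshold in the statement comes from. If you insist on a symmetric single-box decomposition, you would at least have to replace the $\ell^{2}/n^{2}$ cost by the linear-in-$\ell$ bound $\ell/n^{2}$, after which $t\ell/n^{2}+t^{2}/\ell^{2}$ optimizes to $Ct^{4/3}n^{-4/3}$ and does cover $\epsilon\le\frac14$; but that sharper estimate is not what Propositions \ref{prop:one-block}--\ref{prop:one-block-left} give you, and would have to be proved separately.
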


\begin{proof}
In order to prove the proposition we use  the following decomposition:
\begin{align}
\bar\eta(-1) \bar\eta(0)&= \bar\eta(0)\big(\bar\eta(-1)-\vecleft\eta^{\ell}(-1)\big)\label{eq:firstterm}\\
&\quad +\vecleft\eta^{\ell}(-1)\big(\eta(0)-\vec\eta^L(0)\big) \label{eq:secondterm}\\
&\quad +\vecleft\eta^{\ell}(-1)\vec\eta^L(0),\label{eq:thirdterm}
\end{align}
with $\ell,L\in \bb N$. Now we notice that by  \cite[Lemma 7.1]{fgn3} we have 
\begin{equation}\label{eq:easy1}
\mathbb{E}_{\rho} \bigg[ \Big( \int_0^t \bar\eta_{sn^2}(0)\big(\bar\eta_{sn^2}(-1)-\vecleft\eta_{sn^2}^{\ell}(-1)\big) \,ds \Big)^2\bigg]\leq Ct \; \frac{\ell}{n^{2}},
\end{equation}
and by a similar argument as in the proof of \cite[Lemma 7.1]{fgn3} one can easily obtain that
\begin{equation}\label{eq:easy2}
\mathbb{E}_{\rho} \bigg[ \Big( \int_0^t \vecleft\eta_{sn^2}^{\ell}(-1)\big(\eta_{sn^2}(0)-\vec\eta_{sn^2}^L(0)\big) \,ds \Big)^2\bigg]\leq Ct\frac{L}{n\ell}.
\end{equation}
Finally, by the Cauchy-Schwarz inequality, and the fact that the two empirical averages below do not intersect, we get
\[
\mathbb{E}_{\rho} \bigg[ \Big( \int_0^t \vecleft\eta_{sn^2}^{\ell}(-1)\vec\eta_{sn^2}^L(0)  \,ds \Big)^2\bigg]\leq C\frac{t^2}{\ell L}.
\]
Now we make the choice $\ell=Ln^\epsilon=n^{1-\epsilon}$ from which the result follows. 

 Let us notice that the estimates given in \eqref{eq:easy1} and \eqref{eq:easy2} can also be recovered as particular cases of Propositions \ref{prop:one-block} and \ref{prop:one-block-left}. 
\end{proof}

\appendix \section{Semi-group tools} \label{app}
In this appendix, we present an useful result on the semi-groups associated to the operators $\Delta_\beta$ defined on $\mc S_\beta(\bb R)$, namely, the condition \eqref{contingency}. This property was already needed in \cite{fgn3}, but not proved there.
 We start by recalling the three PDE's associated to the different regimes of $\beta$.  We also remark that the operator $\Delta_\beta$ is essentially the Laplacian operator in a specific domain.
 
\subsection{Regime $\beta\in[0,1)$}
The PDE associated to this regime is the heat equation on the line, or else,
\begin{equation}\label{pde1}
\left\{
\begin{array}{ll}
\partial_t u(t,x) = \; \frac{1}{2}\partial_{xx}^2 u(t,x), &t \geq 0,\, x \in \mathbb R,\\
u(0,x) = \; g(x), &x \in \mathbb R.
\end{array}
\right.
\end{equation}
It is a classical fact that the semi-group related to \eqref{pde1} is given by
\begin{equation}\label{sem heat eq}
  T_t g(x):= \frac{1}{\sqrt{2\pi t}}\int_{\bb R} e^{-\frac{(x-y)^2}{2t}}g(y)\,dy\,,\quad \textrm{for }x\in\bb R\,.
 \end{equation}
  If $g\in\mc S(\bb R)$ then $T_t g\in \mc S(\bb R)$ and consequently $\Delta T_t g\in \mc S(\bb R)$, which proves  \eqref{contingency}.

\subsection{Regime $\beta\in(1,\infty]$}
Here, the associated PDE is the heat equation with a boundary condition of Neumann's type at $x=0$ given by
\begin{equation}\label{pde3}
\left\{
\begin{array}{ll}
\partial_t u(t,x) = \; \frac{1}{2}\partial^2_{xx} u(t,x), &t \geq 0,\, x \in \mathbb R\backslash\{0\},\\
\p_x u(t,0^+)=\p_x u(t,0^-)=0,  &t \geq 0,\\
u(0,x) = \; g(x), &x \in \mathbb R.
\end{array}
\right.
\end{equation}
Its semi-group reads as
  \begin{equation}\label{sem heat eq neu}
  T_t^\textrm{Neu} g(x):=
  \begin{cases}
\displaystyle   \frac{1}{\sqrt{2\pi t}}\int_{0}^{+\infty}\Big[
e^{-\frac{(x-y)^2}{2t}}+e^{-\frac{(x+y)^2}{2t}}\Big]g(y)\,dy\,,\quad &\textrm{for }x>0\,,\\
\displaystyle  \frac{1}{\sqrt{2\pi t}}\int_{0}^{+\infty}\Big[
e^{-\frac{(x-y)^2}{2t}}+e^{-\frac{(x+y)^2}{2t}}\Big]g(-y)\,dy\,,\quad &\textrm{for }x<0\,.\\
\end{cases}
 \end{equation}
We claim that $\p_{xx}^2 T_t g$ is again solution of \eqref{pde3}, but with initial condition $\p_{xx}^2 g$, which immediately leads to \eqref{contingency} in this case. One way to see this is to check it directly by differentiating twice the expression \eqref{sem heat eq neu}. Otherwise, one can recall how \eqref{sem heat eq neu} is usually deduced in the literature: in the positive half-line, one has to extend the initial profile $g$ to an even function in the whole line, then make this even function evolves according to \eqref{sem heat eq}, the semi-group of heat equation in $\bb R$. Since the semi-group \eqref{sem heat eq} preserves  even functions, and a smooth even function has zero derivative at zero, we conclude that \eqref{sem heat eq neu} is the solution of \eqref{pde3} in the positive half-line. The same argument applies to the negative half-line. Moreover, an even smooth function has all null derivatives of odd order at zero. This easily implies that $\p_{xx}^2 T_t^{\textrm{Neu}}g$ is a solution of \eqref{pde3} with initial condition $\p_{xx}^2 g$,  leading to \eqref{contingency}.

\subsection{Regime $\beta=1$}
The PDE associated to this regime is the heat equation with a boundary condition of Robin's type at $x=0$ given by
\begin{equation}\label{pde2}
\left\{
\begin{array}{ll}
 \partial_t u(t,x) = \; \frac{1}{2}\partial^2_{xx} u(t,x), &t \geq 0,\, x \in \mathbb R\backslash\{0\},\\
\p_x u(t,0^+)=\p_x u(t,0^-)=\alpha\{u(t,0^+)-u(t,0^-) \},  &t \geq 0,\\
 u(0,x) = \; g(x), &x \in \mathbb R.
\end{array}
\right.
\end{equation}
Denote by $g_{\textrm{even}}$ (resp. $g_{\textrm{odd}}$) the even (resp. odd) parts of a function $g:\bb R\to \bb R$: for $x\in{\mathbb{R}}$,
\begin{equation*}
 g_{\textrm{even}}(x)=\frac{g(x)+g(-x)}{2}\quad \textrm{and} \quad g_{\textrm{odd}}(x)=\frac{g(x)-g(-x)}{2}\,.
\end{equation*}
 The semi-group associated to \eqref{pde2} has been obtained in \cite{fgn3} by symmetry arguments. Its expression is
\begin{equation*}
  \begin{split}
  & T_t^\alpha g(x)= \frac{1}{\sqrt{2\pi t}}\Bigg\{\int_{\bb R}
e^{-\frac{(x-y)^2}{2t}} g_{\textrm{{\rm even}}}(y)\,dy \\
    & + \int_x^{+\infty} e^{-2\alpha (z-x)} \int_0^{+\infty}
\Big[\Big(\frac{z-y+2\alpha t}{t}\Big)e^{-\frac{(z-y)^2}{2t}}+\Big(\frac{z+y-2\alpha t}{t}\Big)e^{-\frac{(z+y)^2}{2t}}\Big]\,
g_{\textrm{{\rm odd}}}(y)\, dy\, dz\,\Bigg\}\,,\\
  \end{split}
  \end{equation*}
\noindent for $x>0$ and
  \begin{equation*}
  \begin{split}
 & T_t^{\alpha} g(x)= \frac{1}{\sqrt{2\pi t}}\Bigg\{\int_{\bb R}
e^{-\frac{(x-y)^2}{2t}} g_{\textrm{{\rm even}}}(y)\,dy \\
    & - \int_{-x}^{+\infty} e^{-2\alpha (x+z)} \int_0^{+\infty}
\Big[\Big(\frac{z-y+2\alpha t}{t}\Big)e^{-\frac{(z-y)^2}{2t}}+\Big(\frac{z+y-2\alpha t}{t}\Big)e^{-\frac{(z+y)^2}{2t}}\Big]\,
g_{\textrm{{\rm odd}}}(y)\, dy\, dz\,\Bigg\}\,,\\
  \end{split}
  \end{equation*}
\noindent for $x<0$.
Here, more important than the formula above is the symmetry that leads to its deduction. Decomposing the initial condition $g$ in its odd and even parts, and  using a similar symmetry argument, one can figure out that
\begin{equation}\label{eq34}
 T_t^\alpha g(x)\;=\;
 \begin{cases}
  T_tg_{\textrm{even}}(x)+\tilde{T}_t^\alpha g_{\textrm{odd}}(x)\,,&\textrm{for } x>0\,,\\
T_tg_{\textrm{even}}(x)-\tilde{T}_t^\alpha g_{\textrm{odd}}(-x)\,,&\textrm{for } x<0\,,\\
  \end{cases}
\end{equation}
where $\tilde{T}_t^\alpha$ is the semi-group of the following partial differential equation in the half-line:
\begin{equation}\label{pde4}
\left\{
\begin{array}{ll}
\partial_t u(t,x) = \; \frac{1}{2}\partial^2_{xx} u(t,x), &t \geq 0,\, x >0,\\
\p_x u(t,0^+)= 2\alpha u(t,0^+),  &t \geq 0,\\
u(0,x) = \; g(x), &x >0.
\end{array}
\right.
\end{equation}
We claim now that $\p_{xx}^2 T^\alpha_t g$ is again a solution of \eqref{pde2} with initial condition $\p_{xx}^2 g$. Provided by \eqref{eq34}, it is enough to show that $\p_{xx}^2 \tilde{T}^\alpha_t g_{\rm odd}$ is again a solution of \eqref{pde4} with initial condition $\p_{xx}^2 g_{\rm odd}$. In other words, we must assure that differentiating twice (in space) a solution of \eqref{pde4} yields again a solution of \eqref{pde4} with the same boundary condition (but  different initial condition).
Denote by $u$ the solution of \eqref{pde4} and consider
\begin{equation}\label{A8}
v=2\alpha u-\p_x u\,,
\end{equation}
which   is the solution of the following equation
\begin{equation}\label{eq Dir}
\left\{
\begin{array}{ll}
\partial_t v(t,x) = \; \frac{1}{2}\partial^2_{xx} v(t,x), &t \geq 0,\, x >0,\\
v(t,0^+)= 0,  &t \geq 0,\\
v(0,x) = \; v_0(x), &x >0.
\end{array}
\right.
\end{equation}
with $v_0=2\alpha g-\p_x g$. Last equation is the heat
equation with a boundary condition of Dirichlet's type.  The semigroup $T^{\textrm{Dir}}_t v_0(x)$ associated to last equation is
given by
\begin{equation}\label{sem dir}
 T^{\textrm{Dir}}_tv_0(x):=\frac{1}{\sqrt{2\pi t}}\int_0^{{+\infty}}\Big[e^{-\frac{(x-y)^2}{2t}}-e^{-\frac{(x+y)^2}{2t}}\Big]v_0(y)\,dy\,.
\end{equation}
If we show that $\p_{xx}^2 T^{\textrm{Dir}}_t v_0$ is again a solution of \eqref{eq Dir}, solving the ODE \eqref{A8} we will conclude that $\p_{xx}^2 \tilde{T}^\alpha_t g_{\rm odd}$ is again a solution of \eqref{pde2}.

Expression \eqref{sem dir} is obtained by a symmetry argument analogous to the previous one. More precisely, given an initial condition in the half-line, we extend it to an odd function in the entire real line and then make it evolve according to \eqref{sem heat eq}. Since \eqref{sem heat eq} preserves odd functions, and any odd smooth function
vanishes at the origin, we conclude that \eqref{sem dir} is the solution of \eqref{eq Dir}. We point out that the second derivative at zero of a smooth odd function vanishes. Hence  $\p_{xx}^2 T^{\textrm{Dir}}_t v_0$ is a solution of \eqref{eq Dir}, which leads to \eqref{contingency}.

\section*{Acknowledgements}

We are very grateful to the anonymous referees and also Oriane Blondel, who significantly
helped to improve the present version of this paper.

This work benefited from the support of the project EDNHS
ANR-14-CE25-0011 of the French National Research Agency (ANR). PG thanks CNPq (Brazil) for support through the research project
``Additive functionals of particle systems'', Universal
n. 480431/2013-2, also thanks FAPERJ ``Jovem Cientista do Nosso Estado''
for the grant E-25/203.407/2014 and the Research Centre of Mathematics
of the University of Minho, for the financial support provided by
``FEDER'' through the ``Programa Operacional Factores de Competitividade
COMPETE'' and by FCT through the research project
PEst-OE/MAT/UI0013/2014. MS thanks CAPES (Brazil) and IMPA (Instituto de Matematica Pura e Aplicada, Rio de Janeiro) for the fellowship ``Bolsa de Excel\^encia''.

\bibliography{bibliography}
\bibliographystyle{plain}

\end{document}